\def\ddefloop#1{\ifx\ddefloop#1\else\ddef{#1}\expandafter\ddefloop\fi}
\def\ddef#1{\expandafter\def\csname bb#1\endcsname{\ensuremath{\mathbb{#1}}}}
\def\ddef#1{\expandafter\def\csname ff#1\endcsname{\ensuremath{\mathfrak{#1}}}}
\def\ddef#1{\expandafter\def\csname cc#1\endcsname{\ensuremath{\mathcal{#1}}}}
\newcommand{\Spec}{\mathrm{Spec}}
\newcommand{\Proj}{\mathrm{Proj}}
\newcommand{\Sym}{\mathrm{Sym}}
\newcommand{\id}{\mathrm{id}}
\newcommand{\Hom}{\mathrm{Hom}}
\newcommand{\sHom}{\mathrm{\underline{Hom}}}
\newcommand{\Sec}{\mathrm{\underline{Sec}}}
\newcommand{\dHom}{\bbR\mathrm{\underline{Hom}}}
\newcommand{\pre}{\mathrm{pre}}
\newcommand{\ev}{\mathrm{ev}}
\newcommand{\vir}{\mathrm{vir}}
\newcommand{\Kos}{\mathrm{Kos}}
\newcommand{\DAG}{\mathrm{DAG}}
\newcommand{\Coh}{\mathrm{Coh}}
\newcommand{\Perf}{\mathrm{Perf}}
\newcommand{\wC}{\widehat{\ffC}}
\newcommand{\uC}{\underline{\ffC}}
\newcommand{\wL}{\widehat{\ffL}}
\newcommand{\Pic}{\mathfrak{Pic}^{\mathrm{s}}}
\newcommand{\Picc}{\mathfrak{Pic}}
\newcommand{\wPic}{\widehat{\Pic}}
\newcommand{\wPicc}{\widehat{\Picc}}
\newcommand{\RQm}{\bbR\overline{\ccQ}_{g,n} (\bbP^r,d)}
\newcommand{\Qm}{\overline{\ccQ}_{g,n} (\bbP^r,d)}
\newcommand{\RGw}{\bbR\overline{\ccM}_{g,n}(\bbP^r,d)}
\newcommand{\Gw}{\overline{\ccM}_{g,n}(\bbP^r,d)}
\newcommand{\SecM}{\bbR\Sec_{\Pic}(\ffL^{\oplus r+1}/\ffC)}
\newcommand{\SecQ}{\bbR\Sec_{\wPic}(\wL^{\oplus r+1}/\wC)}
\newcommand{\upi}{\underline{\pi}}
\newcommand{\uc}{\underline{c}}
\newcommand{\uk}{\underline{k}}
\newcommand{\oc}{\overline{c}}
\newcommand{\UU}{\;U \mkern-21mu U\;}
\newcommand{\rder}{\mathbf{R}}
\newcommand{\lder}{\mathbf{L}}
\newcommand{\wh}[1]{\widehat{#1}}
\newcommand{\un}[1]{\underline{#1}}
\newtheorem{theorem}[subsubsection]{Theorem}
\newtheorem{corollary}[subsubsection]{Corollary}
\newtheorem{lemma}[subsubsection]{Lemma}
\newtheorem{proposition}[subsubsection]{Proposition}
\theoremstyle{definition}
\newtheorem{definition}[subsubsection]{Definition}
\theoremstyle{remark}
\newtheorem{remark}[subsubsection]{Remark}
\newtheorem{example}[subsubsection]{Example}
\newtheorem{construction}[subsubsection]{Construction}
\newtheorem{claim}[subsubsection]{Claim}
\title{Derived moduli of sections and push-forwards}
\author{David Kern}
\author{\'Etienne Mann} 
\author{Cristina Manolache}
\author{Renata Picciotto}
\address{David Kern, Univ Montpellier, CNRS, IMAG, F-34000 Montpellier, France}
\email{david.kern@umontpellier.fr}
\address{\'Etienne Mann, Univ Angers, CNRS, LAREMA, SFR MATHSTIC, F-49000 Angers, France}
\email{etienne.mann@univ-angers.fr}
\address{Cristina Manolache, Univ Sheffield, J23 Hicks building, Sheffield, UK}
\email{c.manolache@sheffield.ac.uk}
\address{Renata Picciotto, Univ Angers, CNRS, LAREMA, SFR MATHSTIC, F-49000 Angers, France}
\email{renata.picciotto@univ-angers.fr}
\begin{document}
\maketitle

\begin{abstract}
    We introduce a derived enhancement $\bbR\Sec_{\ffM}(\ffZ/\ffC)$ of the moduli space of sections $\Sec_{\ffM}(\ffZ/\ffC)$ defined by Chang--Li, and we compute its tangent complex. Special cases of this moduli space include stable maps and stable quasi-maps. As an application, we prove that $G$-theoretic stable map and quasi-map invariants of projective spaces are equal.
\end{abstract}
\tableofcontents
\section{Introduction}

{\bf Statements of the main results.}
This work consists of two parts: in the first part we construct a derived moduli of sections, which encodes both the classical and the virtual geometry of many well-studied moduli spaces, such as moduli of stable maps and quasi-maps (see Theorem \ref{thm:intro,derived,section}); in the second part we suggest a new approach to virtual push-forward formulae via derived geometry, in form of the following theorem.
\begin{theorem}
Let $g,n,r$ and $d$ be positive integers and let $\bbR\overline{\ccM}_{g,n} (\bbP^r,d)$ and $\bbR\overline{\ccQ}_{g,n} (\bbP^r,d)$ denote the derived moduli space of genus $g$, degree $d$ stable maps, respectively quasi-maps to a projective space $\bbP^r$.
\begin{enumerate}
    \item We have a derived morphism 
\[
\oc: \RGw \to \RQm
\]
and an isomorphism
\[
\oc_* \ccO_{\bbR\overline{\ccM}_{g,n} (\bbP^r,d)} = \ccO_{\bbR\overline{\ccQ}_{g,n} (\bbP^r,d)} \mbox{ in } \ccD^b(\Coh(\bbR\overline{\ccQ} (\bbP^r,d)).
\]
\item Consequently, $G$-theoretic stable map and quasi-map invariants are the same (See Corollary \ref{coro:K,GW,QM}).
\end{enumerate}
\end{theorem}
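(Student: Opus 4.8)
The plan is to realise both derived moduli spaces as derived sections of a common family and then to identify the map $\oc$ as induced by a morphism over the base. Concretely, stable maps and quasi-maps to $\bbP^r$ both arise as $\bbR\Sec_{\Pic}(\ffL^{\oplus r+1}/\ffC)$ for an appropriate choice of the universal curve $\ffC$ (respectively with prestable, respectively with not-necessarily-stabilised, parametrisations) and universal line bundle $\ffL$; see Theorem \ref{thm:intro,derived,section} and the notation $\SecM$, $\SecQ$. The contraction morphism $\oc$ on the classical level is the one of Marian--Oprea--Pandharipande/Toda contracting rational tails, and on the level of universal curves it is a morphism $\wC \to \ffC$ over the map of base moduli stacks which is a sequence of blow-downs; the first step is to promote this to the derived setting by checking that the derived structure on $\RGw$ is pulled back from $\RQm$ along $\oc$ together with the derived structure carried by the relevant $\wPic$ and $\Picc$. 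This is where the explicit tangent complex computation of Theorem \ref{thm:intro,derived,section} enters: it lets us compare obstruction theories and see that $\oc$ is quasi-smooth of relative virtual dimension zero.

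Granting the morphism $\oc$, the heart of part (1) is the statement $\oc_*\ccO_{\RGw} = \ccO_{\RQm}$. Here I would argue fibrewise over $\RQm$ (more precisely, after base change along any map from an affine derived scheme), reducing to the following local assertion: if $\wC \to \ffC$ is the contraction of a chain/tree of rational tails on which the relevant degree is concentrated, then the derived push-forward of the structure sheaf along the induced map on derived section spaces is the structure sheaf. The key geometric input is that the exceptional loci are trees of $\bbP^1$'s, so that $R^{\bullet}\rho_*\ccO = \ccO$ for the contraction $\rho$ of curves, and that the derived moduli of sections is built functorially out of $\ffC$ via $\bbR\Sec$; one then invokes a projection-formula/flat-base-change argument together with the fact that $\bbR\Sec$ commutes with the relevant homotopy limits to push the identity $R\rho_*\ccO_{\wC}=\ccO_{\ffC}$ through to the section spaces. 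In effect $\oc$ is a \emph{derived} contraction with rationally connected (indeed rational-tree) fibres and trivial higher direct images, which is exactly the condition making $\oc_*\ccO = \ccO$; one should also check $\oc$ is proper, which follows from properness of the classical contraction since $\oc$ is a closed-immersion-locally-trivial thickening over it.

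For part (2), once (1) is established the equality of $G$-theoretic invariants is formal. Recall that a $G$-theoretic invariant is defined by pairing insertions $\prod \ev_i^* \gamma_i$ (pulled back $K$-theory classes, which agree on the two spaces because the evaluation maps and $\psi$-type classes are compatible with $\oc$) against the structure sheaf of the derived moduli space and pushing forward to a point; i.e. it is $\chi$ of a perfect complex on $\RGw$, respectively $\RQm$. By the projection formula, $\chi_{\RGw}(\oc^*(-)) = \chi_{\RQm}((-)\otimes \oc_*\ccO_{\RGw}) = \chi_{\RQm}(-)$ using part (1). I would spell this out in Corollary \ref{coro:K,GW,QM}, being careful that the virtual structure sheaf in the sense of Lee's $G$-theoretic Gromov--Witten theory coincides with $\ccO$ of our derived enhancement — this compatibility is the content of the first part of the paper and of Theorem \ref{thm:intro,derived,section}.

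The main obstacle I anticipate is the identity $\oc_*\ccO_{\RGw} = \ccO_{\RQm}$ at the derived level: classically one knows $R\pi_*\ccO = \ccO$ for contractions of rational tails, but transporting this across the $\bbR\Sec$ construction requires knowing that $\bbR\Sec$ is suitably continuous and that the derived structures match, rather than merely the underlying classical push-forward being an isomorphism. Controlling the higher cotangent/obstruction terms — ensuring no derived nilpotents obstruct the comparison — is the delicate point, and this is precisely where the tangent-complex formula of the first part must be used in an essential way.
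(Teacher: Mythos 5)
There is a genuine gap in your central step. You propose to reduce $\oc_*\ccO_{\RGw}=\ccO_{\RQm}$ to the identity $\rder\rho_*\ccO_{\wC}=\ccO_{\ffC}$ for the contraction $\rho$ of universal curves, ``pushing this through'' $\bbR\Sec$ by continuity/projection formula arguments, on the grounds that $\oc$ has ``rational-tree fibres.'' But the fibres of $\oc:\RGw\to\RQm$ are \emph{not} trees of $\bbP^1$'s: over a quasi-map $(\wh{C},\wh{L},\wh{s})$ with base points of local orders $\delta_i$, the fibre parametrises all stable maps obtained by attaching rational tails of the appropriate degrees and choosing maps on them modulo equivalence --- a nontrivial moduli problem, not a curve. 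The identity $\rder\rho_*\ccO_{\wC}=\ccO_{\ffC}$ lives on the universal curves, and there is no general formal mechanism by which $\bbR\Sec$ transports such an identity across to its output; $\bbR\Sec$ is a Weil restriction along $\pi$ and does not interact with curve-level pushforwards along $\rho$ in the way your proposal needs. (The curve-level fact does enter, but only in constructing $\oc$ itself, via $\rder^1\kappa_*\ccL(\delta\ffD)=0$ and $\kappa^*\kappa_*\ccL(\delta\ffD)=\ccL(\delta\ffD)$; it does not by itself give the pushforward formula for $\oc$.)

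The paper's actual mechanism is quite different, and your proposal is missing its essential idea. The argument is \'etale-local: around any $\xi\in\RGw$ and $\wh{\xi}=\oc(\xi)$ one builds compatible atlases $(W,F,\theta)$ and $(\wh{W},F,\theta)$, where $W$ and $\wh{W}$ are \emph{smooth classical} Deligne--Mumford stacks (open sets in $\bbR\Sec$ of a sufficiently twisted bundle $\ffL(A)$), $\theta$ is a section of a vector bundle on $\wh{W}$, and locally $\RQm\simeq\bbR Z(\theta)$ and $\RGw\simeq\bbR Z(q^*\theta)$ for a proper birational projective map $q:W\to\wh{W}$. This puts $\oc$ in a homotopy-cartesian square over $q$, reducing the derived statement by base change to the purely classical fact that $\rder q_*\ccO_W=\ccO_{\wh{W}}$, which is supplied by Zariski's Main Theorem (after passing through the relative coarse moduli map to handle stackiness) together with higher direct image vanishing from projectivity. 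The delicate content --- choosing the auxiliary divisor $A$ away from base points and special fibres, labelling base loci \'etale-locally, and verifying that $q$ is proper, birational, and projective --- occupies most of Sections~\ref{subsec:local,embeddings}--\ref{sec:main,thm,pushforward}; none of this appears in your outline, and the step you would replace it with does not go through.

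Your part (2) is fine and matches the paper: once (1) is known, the projection formula together with compatibility of $\ccO^{\vir}$ with the derived structure sheaf gives the equality of $G$-theoretic invariants.
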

The second part of the theorem generalises the already known cohomological result (see \cite{Fontanine-QMap2010}, \cite[Theorem 3]{Marian-Oprea-Pand-moduli-stable-2011} and \cite[Proposition 5.19]{Manolache-virtual-push-2012}): 
\[
t_0(\oc)_* [\overline{\ccM}_{g,n} (\bbP^r,d)]^\vir = [\overline{\ccQ}_{g,n} (\bbP^r,d)]^\vir \mbox{ in } A_*(\overline{\ccQ}_{g,n} (\bbP^r,d))\text{.}
\]
This shows that our statement is a categorification of  the equality above.

The derived statement is obtained by studying the contraction map $\oc$ locally, after constructing compatible derived atlases on the two spaces. The advantage over the classical situation is that local information can now be used to obtain global statements: rather than having external information based on choices of perfect obstruction theories, this data is now encoded in the geometry of the derived moduli spaces.

In the following we introduce the moduli of sections (see Chang--Li \cite[\S 2]{CL12}), which is the central object of study in this paper. Consider an Artin stack $\ffM$ with a flat, nodal, projective curve $\ffC$ and a morphism of $\ffM$-Artin stacks $\pi:\ffZ\to \ffC$. For any test scheme $S\to \ffM$ the moduli of sections is defined as 
\[
\Sec_{\ffM} (\ffZ/\ffC)(S\to\ffM)=\left\{ f:C_S\to Z_S | \pi_S\circ f=id_{C_S} \right\},
\]
for $\pi_S: Z_S:=\ffZ\times_\ffM S \to C_S:=\ffC\times_\ffM S$. 
In order to construct  a quasi-smooth derived enhancement of this space, we additionally need to require that the Artin stack $\ffZ$ is smooth relative to $\ffC$.

If $\ffM=\ffM^{\pre}_{g,n}$ is the moduli space of genus $g$, $n$-pointed prestable curves and $\ffC$ is its universal curve, we can take $\pi: \ffZ \to \ffC$ to be a trivial fibration $\ffZ:=\ffC\times_\ffM X$ where $X$ is a smooth projective variety or DM stack. Then $\Sec_{\ffM}(\ffC\times X/ \ffC)$ is the usual moduli space of stable maps to $X$ (See Example \ref{ex:stablemaps}). For nontrivial fibrations, this construction recovers moduli of quasi-maps and twisted theories such as stable maps with fields (see Example in \S \ref{ss:cone}).

In \S \ref{Derived-sections}, we put a natural derived structure, denoted by $\bbR\Sec_\ffM(\ffZ/\ffC)$, on the moduli of sections and we compute its relative tangent complex. This is compatible with the perfect obstruction theory defined by \cite{CL12} and vastly generalized by ~\cite{W21} (see \ref{ss:cone}). 
For a precise statement, consider the following universal family over $\bbR\Sec_{\ffM}(\ffZ/\ffC)$:
\begin{equation}\label{diag:ev,sec,intro}
    \begin{tikzcd}
    & \ffZ\ar[d,"p"]\\
     \ffC_{\bbR\Sec_{\ffM}(\ffZ/\ffC)}\arrow[ur,"\ev"]\ar[d,"\pi_{1}"']\ar[r]& \ffC\ar[d,"\pi"]\\
     \bbR\Sec_{\ffM}(\ffZ/\ffC)\ar[r]& \ffM.
    \end{tikzcd}
\end{equation}

With this, we have:
\begin{theorem}\label{thm:intro,derived,section}
For $\ffZ$ a locally almost finitely presented 1-Artin stack with quasi-affine diagonal relative to $\ffM$,
\begin{enumerate}
    \item Corollary \ref{cor:representability_sec}. The functor of sections of $\ffZ\to\ffC$ over $\ffM$ is representable by a locally almost finitely presented 1-Artin stack with quasi-affine diagonal $\bbR\Sec_{\ffM}(\ffZ/\ffC)\to\ffM$.
    \item Corollary \ref{coro:derived,Rsec}. The truncation of $\bbR\Sec_{\ffM}(\ffZ/\ffC)$ is the moduli of sections of introduced by Chang-Li in \cite{CL12} (see also \cite{W21}).
    \item Proposition \ref{prop:moduli_section_vector_bundle}. If $\ffZ$ is a vector bundle on $\ffC$ with sheaf of sections $\ccE$,  the moduli of sections is itself a derived vector bundle. More precisely, we have 
    \[
    \bbR\Sec_{\ffM}(\ffZ/\ffC)=\bbR \Spec_\ffM  \Sym (\rder\pi_* \ccE)^\vee\text{.}
    \] 
    \item Theorem \ref{thm:tanS}. Using the notation of Diagram \eqref{diag:ev,sec,intro}, the relative tangent space of $\bbR\Sec_{\ffM}(\ffZ/\ffC)$ is 
\[
\bbT_{\bbR\Sec_{\ffM}(\ffZ/\ffC)/\ffM}=\rder\pi_{1 *}\lder\ev^*\bbT_{\ffZ/\ffC}\text{.}
\]
\end{enumerate}
\end{theorem}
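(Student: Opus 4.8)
The plan is to compute the relative cotangent complex of $W:=\bbR\Sec_{\ffM}(\ffZ/\ffC)$ by deformation theory and then dualise. By part~(1) (Corollary~\ref{cor:representability_sec}) the stack $W$ is a locally almost finitely presented $1$-Artin stack over $\ffM$, so it admits a relative cotangent complex $\bbL_{W/\ffM}$, and $\bbT_{W/\ffM}:=\bbL_{W/\ffM}^{\vee}$ is what we must identify (in fact this cotangent complex can already be read off from the representability argument; what follows mainly rewrites it). Fix an affine derived scheme $S$ with a map $x\colon S\to W$ over $\ffM$; unwinding the functor of points, this is the datum of a section $\ev_{x}\colon C_{S}\to\ffZ$ of $\pi$ over $\ffC$, where $C_{S}:=\ffC\times_{\ffM}S$ and $\ev_{x}$ is the pullback along $x$ of the universal evaluation $\ev$ of Diagram~\eqref{diag:ev,sec,intro} (we write $\pi_{1}$ also for the induced projection $C_{S}\to S$). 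For connective $\ccF\in\mathrm{QCoh}(S)$ the cotangent complex is characterised by a natural equivalence between $\Map_{S}(x^{*}\bbL_{W/\ffM},\ccF)$ and the fibre over $x$ of the map $W(S[\ccF])\to W(S)$ of spaces of $\ffM$-points, i.e.\ the space of lifts of $x$ along the trivial square-zero extension $S[\ccF]\to S$.

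The first step is to transport this space of lifts onto the curve. Since $S[\ccF]\to S$ is affine, derived base change gives $\ffC\times_{\ffM}S[\ccF]\simeq C_{S}[\lder\pi_{1}^{*}\ccF]$, the trivial square-zero extension of $C_{S}$ by $\lder\pi_{1}^{*}\ccF$. By the functor-of-points description of $W$, a lift of $x$ is precisely an extension of the $\ffC$-map $\ev_{x}$ to a section over $C_{S}[\lder\pi_{1}^{*}\ccF]$; the deformation theory of maps, governed by the relative cotangent complex of $\ffZ/\ffC$, then yields a natural equivalence
\[
\Map_{S}(x^{*}\bbL_{W/\ffM},\ccF)\;\simeq\;\Map_{C_{S}}\bigl(\lder\ev_{x}^{*}\bbL_{\ffZ/\ffC},\,\lder\pi_{1}^{*}\ccF\bigr)\text{.}
\]
Checking that these identifications are compatible with base change $S'\to S$ upgrades them to the global statement that $\bbL_{W/\ffM}$ corepresents $\ccF\mapsto\Map_{\ffC_{W}}(\lder\ev^{*}\bbL_{\ffZ/\ffC},\lder\pi_{1}^{*}\ccF)$ on $\mathrm{QCoh}(W)$, where $\ffC_{W}$ is the universal curve $\ffC_{\bbR\Sec_{\ffM}(\ffZ/\ffC)}$ of Diagram~\eqref{diag:ev,sec,intro}.

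It remains to rewrite this using duality along $\pi_{1}$. As $\pi_{1}$ is proper, flat, and Gorenstein of relative dimension $1$ (its fibres being nodal curves), Grothendieck duality supplies the adjunction $\rder\pi_{1*}\dashv\pi_{1}^{!}$ together with $\pi_{1}^{!}(-)\simeq\lder\pi_{1}^{*}(-)\otimes\omega_{\pi_{1}}[1]$ for the relative dualising line bundle $\omega_{\pi_{1}}$. Rewriting $\lder\pi_{1}^{*}\ccF\simeq\pi_{1}^{!}\ccF\otimes\omega_{\pi_{1}}^{\vee}[-1]$ and moving the line bundle across the $\Map$ before applying the adjunction gives $\bbL_{W/\ffM}\simeq\rder\pi_{1*}\bigl(\lder\ev^{*}\bbL_{\ffZ/\ffC}\otimes\omega_{\pi_{1}}[1]\bigr)$. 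Dualising and applying relative Grothendieck--Serre duality once more,
\[
\bbT_{W/\ffM}=\bbL_{W/\ffM}^{\vee}\simeq\rder\pi_{1*}\,\dHom_{\ffC_{W}}\!\bigl(\lder\ev^{*}\bbL_{\ffZ/\ffC}\otimes\omega_{\pi_{1}}[1],\,\omega_{\pi_{1}}[1]\bigr)\simeq\rder\pi_{1*}\,\dHom_{\ffC_{W}}\!\bigl(\lder\ev^{*}\bbL_{\ffZ/\ffC},\ccO_{\ffC_{W}}\bigr)\simeq\rder\pi_{1*}\lder\ev^{*}\bbT_{\ffZ/\ffC}\text{,}
\]
which is the asserted formula for $\bbT_{\bbR\Sec_{\ffM}(\ffZ/\ffC)/\ffM}$; the last equivalence uses that $\lder\ev^{*}$ commutes with dualisation.

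The routine part of the argument is the square-zero bookkeeping of the first two paragraphs. The step requiring care is the final chain of equivalences: both the formation $\bbT_{W/\ffM}=\bbL_{W/\ffM}^{\vee}$ and the identity $\dHom(\lder\ev^{*}\bbL_{\ffZ/\ffC},\ccO)\simeq\lder\ev^{*}\bbT_{\ffZ/\ffC}$ use that $\bbL_{\ffZ/\ffC}$, hence $\lder\ev^{*}\bbL_{\ffZ/\ffC}$, is perfect. This holds under our standing hypotheses and in particular when $\ffZ$ is smooth over $\ffC$ --- the case relevant to stable maps and quasi-maps, where $W$ is moreover quasi-smooth; in general one still obtains the cotangent complex in the form $\bbL_{W/\ffM}\simeq\rder\pi_{1*}(\lder\ev^{*}\bbL_{\ffZ/\ffC}\otimes\omega_{\pi_{1}}[1])$, which is what is actually used downstream. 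A slightly shorter variant avoids the second appeal to Serre duality altogether: once $\lder\ev_{x}^{*}\bbL_{\ffZ/\ffC}$ is perfect one dualises and uses the projection formula to rewrite $\Map_{C_{S}}(\lder\ev_{x}^{*}\bbL_{\ffZ/\ffC},\lder\pi_{1}^{*}\ccF)\simeq\Map_{S}\bigl((\rder\pi_{1*}\lder\ev_{x}^{*}\bbT_{\ffZ/\ffC})^{\vee},\ccF\bigr)$, reading off $\bbT_{W/\ffM}$ directly.
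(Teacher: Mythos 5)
Your argument addresses only part~(4) of the four-part theorem. Parts (1)--(3) --- representability, truncation, and the identification of $\bbR\Sec_{\ffM}(\bbV(\ccE)/\ffC)$ as a derived vector bundle --- are not touched, and each requires its own argument in the paper: representability via the fibre-product presentation~\eqref{eq:dsec} and \cite[Theorem 5.1.1]{halpern-leistner14:_mappin} (Corollary~\ref{cor:representability_sec}); the truncation statement by restricting the functor of points of Lemma~\ref{lemma:fctor-pts-sec} along $(\mathbf{Sch}/\ffM)\subset(\mathbf{dSch}/\ffM)$ and truncating the fibre square (Corollary~\ref{coro:derived,Rsec}); and the vector bundle case by a chain of duality equivalences via \cite[Theorem 4.1.1, Corollary 4.4.2]{MR2681711} (Proposition~\ref{prop:moduli_section_vector_bundle}). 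You also invoke part~(1) explicitly in your proof of (4), so the logical ordering matters and cannot be skipped.

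For the part you do prove, your route genuinely differs from the paper's Theorem~\ref{thm:tanS}. The paper quotes the formula $\bbT_{\dHom_X(Y,Z)/X}=\rder\pi_{H*}\lder\ev_H^*\bbT_{Z/X}$ for derived Hom-stacks (equation~\eqref{tanH}) and runs a diagram chase through the defining homotopy fibre square: it computes $\bbT_{\ffM/K}$, hence $\bbT_{\bbR\ffS/H}$, and assembles $\bbT_{\bbR\ffS/\ffM}$ from the distinguished triangle of $\bbR\ffS\to H\to\ffM$ after a round of base-change identities. You instead go back to first principles: characterise $x^{*}\bbL_{W/\ffM}$ on an affine $S$ by the lifting problem along $S[\ccF]\to S$, transport it to the curve via $\ffC\times_{\ffM}S[\ccF]\simeq C_{S}[\lder\pi_{1}^{*}\ccF]$, identify the lift space with $\Map_{C_{S}}(\lder\ev_{x}^{*}\bbL_{\ffZ/\ffC},\lder\pi_{1}^{*}\ccF)$ by deformation theory of maps, and then apply Grothendieck--Serre duality along $\pi_{1}$ twice. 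This computation is correct under the standing smoothness hypothesis on $\ffZ/\ffC$, which you correctly flag as what makes $\bbL_{\ffZ/\ffC}$ perfect so that $\lder\ev^{*}$ commutes with dualisation. The trade-off is that the paper's version is faster once \eqref{tanH} is accepted as input, whereas yours is self-contained, isolates the intermediate cotangent form $\bbL_{W/\ffM}\simeq\rder\pi_{1*}(\lder\ev^{*}\bbL_{\ffZ/\ffC}\otimes\omega_{\pi_{1}}[1])$ which holds without perfectness, and is essentially the Weil-restriction cotangent computation the paper attributes to \cite[\S 19.1.4]{Lurie_SAG} but does not spell out.
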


The moduli space of curves on projective spaces (or more generally on toric DM stacks) admits various compactifications, which are all substacks of a common moduli of sections. As a map to the projective space $\bbP^r$ is a line bundle with sections, the (underived) moduli of quasi-maps to $\bbP^r$, denoted by $\overline{\ccQ}_{g,n}(\bbP^r,d)$, and the moduli of stable maps, denoted by $\Gw$,  are both open substacks of a moduli of sections over $\Picc=\Picc_{g,n,d}$ -- the moduli space of line bundles over pre-stable curves. We thus obtain derived structures on the moduli space of stable maps and quasi-maps, denoted by $\RGw$ and $\bbR\ccQ_{g,n}(\bbP^r,d)$ respectively. In \cite{STV} the authors define another derived structure on $\Gw$, which is induced by a Hom-space over $\ffM^{\pre}_{g,n}$. In \S \ref{sec:compare_derived_stable_maps}, we prove that the derived structure in \cite{STV} and the derived structure described above are the same (see Theorem \ref{thm:der_structures_on_Mgn}).
\\

{\textbf {Outline of the paper.}}

In \S \ref{sec:notation} we introduce notation.

In \S \ref{Derived-sections} we give a natural derived structure on the moduli of sections (see Corollary \ref{coro:derived,Rsec}) and compute its tangent spaces (see Theorem \ref{thm:tanS}). We also investigate the case when $\ffZ$ is a bundle (see Proposition \ref{prop:moduli_section_vector_bundle}).

In \S \ref{sec:compare_derived_stable_maps} we study in detail the cases of the moduli of stable maps and quasi-maps viewed inside the derived stacks of sections. We prove that the two derived structures -- the one coming from the moduli of sections and the one from maps -- are the same (see Theorem \ref{thm:der_structures_on_Mgn}).

In \S \ref{sec:GWtoQM} (see Theorem \ref{prop:div,rational,tails}) we construct the derived morphisms 
\begin{equation}\label{derived c}
\oc :\bbR \Sec_{\Picc}(\ffL^{\oplus r+1}/\ffC) \to \bbR \Sec_{\wPicc} (\widehat{\ffL}^{\oplus r+1}/\widehat{\ffC})\text{,}
\end{equation}
where $\Picc$ denotes the stack parametrising pre-stable curves together with a line bundle and $\wPicc$ denotes the stack parametrising pre-stable curves without rational tails\footnote{Rational tails are tree of $\bbP^1$ that does not have marked point. See Definition \ref{defi:divisor,rational,tails} for details.} together with a line bundle. Over $\Picc$ we have a universal curve and a universal line bundle:
\[
\ffL \to \ffC \to \Picc \text{.}
\]
Similarly, over $\wPicc$ we have a universal family
\[
\widehat{\ffL} \to \widehat{\ffC} \to \wPicc \text{.}
\]
The truncation of morphism (\ref{derived c}) recovers the map 
\[
\overline{c}:\Gw\to \Qm
\]
defined in \cite{Fontanine-QMap2010}, \cite[Theorem 3]{Marian-Oprea-Pand-moduli-stable-2011} and \cite[Proposition 5.19]{Manolache-virtual-push-2012}, which contracts rational tails.

In \S \ref{subsec:local,embeddings} we prove that the pushforward by $\oc$ of the derived structure sheaf of the moduli space of stable maps is the structure sheaf of the space of quasi-maps (see Theorem \ref{thm:pushforward}). The main idea is to find compatible derived atlases on the two spaces. On $\RGw$ each chart consists of a triple $(W,F,\theta)$, where $W$ is a smooth stack over $\Picc$, $F$ is a vector bundle over $W$ and $\theta$ is a section of $F$ such that locally $\RGw\simeq Z^h(\theta)$. Here $Z^h(\theta)$ denotes the derived vanishing locus of $\theta$. We construct a similar atlas for $\RQm$. 
\\

{\bf Historical note.}
Moduli spaces appearing in Gromov--Witten theory and, more broadly, in enumerative geometry, are usually singular and they may have irreducible components of different dimensions. To extract information about enumerative problems, such as various types of invariants, one needs to integrate over these moduli spaces. As such spaces do not carry a fundamental class of pure dimension, various techniques have been developed to construct an ersatz.

Historically, Li--Tian \cite{MR1467172} and Behrend--Fantechi \cite{BF} have proposed solutions to the integration problem by introducing virtual cycles, which allowed cohomological Gromov--Witten invariants to be formally mathematically defined. Using similar techniques, Lee \cite{Lee-QK-2004} constructed a virtual structure sheaf, which is key in defining $K$-theoretical (or in fact $G$-theoretical) invariants. These constructions formalize the objects used by Kontsevich in \cite{Kontsevich95}. The definitions of these virtual objects are not intrinsic; rather, they depend on the choice of a replacement for the cotangent complex of the singular moduli space. The unworkable cotangent complex is replaced locally by a 2-term complex of vector bundles: this is the perfect obstruction theory. For many moduli spaces, the choice of this replacements comes from the geometry of the original moduli problem.

In the seminal paper \cite{Kontsevich95}, Kontsevich proposed a different approach to solve this problem via the notion of differential graded manifolds (or schemes), in short, \emph{$dg$-manifolds}. This idea was developed by Kapranov and Ciocan-Fontanine in \cite{Kapranov-Cio-Font-derived-quot-2001} and\cite{CF-Kapranov-2002-dervied-Hilbert-scheme}.

In \cite{STV}, Sch\"urg-To\"en-Vezzosi use \emph{derived algebraic geometry} to give a more geometric interpretation of these virtual objects. This idea is one of the numerous applications of the field derived algebraic geometry developed by Toën--Vezzosi (eg. \cite{TV05} and \cite{TV08}, see \cite[\S 3.1]{T14} for a nice overview) and by Lurie in \cite{Lurie_SAG}. The derived and \textit{dg} approach are related, but they are not equivalent (see \cite{T14} for the difference).

On the side of differential geometry, Joyce has developed parallel theories of d-manifolds and d-orbifolds and closely related theories of Kuranishi spaces (see \cite{joyce_d_manifolds} for a summary of d-manifolds, \cite{Joyce-Kuranishi-2019} for Kuranishi spaces). Central to the study of moduli spaces are the ideas of derived critical loci \cite{derived_critical_locus}, studied by Vezzosi, and the parallel concept of algebraic d-critical loci introduced by Joyce \cite{d-critical}, as well as those of shifted symplectic structures \cite{shifted_symplectic} of Pantev--To\"{e}n--Vaqui\'{e}--Vezzosi, applied to the study of Donaldson--Thomas invariants by Brav--Bussi--Joyce \cite{darboux-derived}. Nowadays, many works use derived algebraic geometry to study moduli spaces amongst them we recall \cite{MR18} \cite{Kern-QL-2020} \cite{Porta-Yue-Non-archime-2020} \cite{DAG-panorama-2021} \cite{Khan-Virt-class-2019} \cite{Aranha-intrinsic-Artin-2019} \cite{Khan-Aranha-Local-alg-stacks-2022} \cite{Khan-Virtu-excess-inter-2021} \cite{Duff-Joyce-virtual-cycle-2019}, \cite{Joice-Safronov-Lag-2019}. Just as perfect obstruction theories, derived structures on a scheme (or stacks) are not unique: they depend on a choice. In many cases there are natural ones coming from the geometry. \\

{\bf Virtual structure sheaves via derived algebraic geometry.}
In this paper, we use derived algebraic geometry to study the moduli space of sections. In the following we sketch the way in which derived algebraic geometry recovers virtual objects. For a derived stack $\bbR\ffX$, its truncation $t_0(\bbR\ffX)=\ffX$ has a closed embedding or \textit{derived enhancement}:
\[j: \ffX \xhookrightarrow{} \bbR\ffX\text{.}
\]
Informally, $\bbR\ffX$ and $\ffX$ have the same underlying geometric space, but the derived stack is akin to a nilpotent thickening. 
If the derived stack $\bbR\ffX$ is quasi-smooth, that is its cotangent complex is cohomologicaly supported in $(-1,\infty]$, we can define a sheaf class on $\ffX$ via
\[\ccO_{\ffX}^{\vir,\DAG}:= (j_*)^{-1}\ccO_{\bbR\ffX}\text{,}
\]
where $j_{\ast}$ is the induced map between $G$-theory groups, which by dévissage is invertible. 

On the other hand, the derived enhancement gives a perfect obstruction theory for $\ffX$, as long as $\bbR\ffX$ is quasi-smooth and $\ffX$ is a Deligne--Mumford stack. The differential of the inclusion $j$ gives a morphism
\[
dj : j^* \bbL_{\bbR\ffX} \to \bbL_\ffX,
\]
which, under our assumptions, is a perfect obstruction theory \cite[Proposition~1.2]{STV}. Using this perfect obstruction theory, we can follow the recipe of Lee \cite{Lee-QK-2004} to construct a virtual sheaf $\ccO_{\ffX}^{\vir, \mathrm{POT}}$ for $\ffX$. We get an a priori different sheaf on $\ffX$. The equality of $\ccO_{\ffX}^{\vir, \mathrm{POT}}$ and $\ccO_{\ffX}^{\vir, \mathrm{DAG}}$ in the $G$-theory of $\ffX$ is a deep statement, which was proved in \cite[MR, \S 5.4 and \S 5.5]{DAG-panorama-2021} (see also \cite[\S 6]{Porta-Yue-Non-archime-2020}).
\\

{\bf Further directions.}
We believe that our main theorem is part of a new strategy to prove equalities between virtual objects. The strategy is: 
\begin{enumerate}
    \item to construct a morphism at the derived level so that we have a morphism between virtual structure sheaves, and
    \item to prove locally that this morphism is an isomorphism.
    \end{enumerate}
    
For more general statements, one needs to develop a more general machinery: we expect situations in which we have a simple virtual push-forward theorem, but a more complicated relation between derived structure sheaves.

In terms of applications of such a machinery, it is natural to consider stable maps and quasi-maps to a general toric variety $X$ and to try to derive a relation between (derived) structure sheaves. This is not straight-forward, as for a general $X$ there is no morphism
\[
\oc :\bbR\overline{\ccM}_{g,n} (X,d) \dashrightarrow \bbR\overline{\ccQ}_{g,n}(X,d).
\] 
On the other hand, it is possible to get an easy local picture.

We will treat these problems in future works.
\vspace{0.5cm}

{\bf Acknowledgments.}
E. Mann and R. Picciotto received funding by the Agence National de la Recherche (ANR) for the ``Categorification in Algebraic Geometry'' project ANR-17-CE40-0014.
D. Kern received funding from the European Research Council (ERC) under the European Union’s Horizon 2020 research and innovation
programme (Grant ``Derived Symplectic Geometry and Applications'' Agreement No. 768679). C. Manolache was supported by a Royal Society Dorothy Hodgkin Fellowship and an Emmy Noether Fellowship of the London Mathematical Society.

\section{Notation}\label{sec:notation}
\begin{itemize} 
\item Everything is over $\bbC$.
\item For locally free sheaf $\ccE$ on a space $X$ (a scheme, stack, or derived stack) the vector bundle of $\ccE$ is $\bbV(\ccE)\coloneqq {\Spec}_{X}\Sym_{\ccO_X}\ccE^{\vee}$.
\item Let $\ffM$ bean Artin stack with a flat proper family $\pi:\ffC\to\ffM$ of relative dimension $1$. For any morphism $\ffU\to \ffM$, we denote $\pi_{\ffU}:\ffC_{\ffU}\to\ffU$ the pullback of $(\pi,\ffC)$. The most classical example would be $\ffM$ be the moduli of prestable curve, denoted by $\ffM_{g,n}$ of genus $g$ with $n$ marked point and $\ffC_{g,n}$ its universal curve.
\item We use $\bbR$ to mean a derived structure on a geometric object (for example $\bbR X$), and $\rder$ (respectively $\lder$) a right (resp. left) derived functor, for example $\rder f_*$ (resp $\lder f^*$).
\item For $X,Y,Z$ non-derived stacks $\sHom_X(Y,Z)$ are Hom-stacks (relative internal hom) whereas $\Hom_X(Y,Z)$ are groupoids. 
\item For $X,Y,Z$ derived (or non-derived) stacks $\dHom_X(Y,Z)$ are derived Hom-stacks whereas $\bbR\Hom_X(Y,Z)$ are simplicial sets. 
\item For $X$ a non-derived stack, $\ccF$, $\ccG$ sheaves on $X$, $\Hom_{\ccO_X-\mathrm{mod}}(\ccF,\ccG)$ is the global $\Hom$ of sheaves. For $X$ a derived stack and $\ccF$, $\ccG$ complexes of sheaves, $\rder\Hom_{\ccO_X-{dgm}}(\ccF,\ccG)$ denotes the simplicial set associated by the Dold--Kan correspondence to the complex $\Hom^{\bullet}(\ccF,\ccG)$ defined as $\Hom^i(\ccF,\ccG)\coloneqq\Hom^0(\ccF,\ccG[i])$. 
\item $\Picc_{g,n,d}$ (or $\Picc$ for short) is the moduli space of prestable curves of genus $g$ with $n$ marked points together with a degree $d$ line bundle. When we impose some stability conditions, we will write $\Pic$ (see Notation \S \ref{notation}).
\item $\Gw$  and $\RGw$ are the (derived) moduli of  stable maps of genus $g$ with $n$ marked points to projective space $\bbP^r$.
\item  $\Qm$ and $\RQm$ are the (derived) moduli of  quasi-maps of genus $g$ with $n$ marked points to projective space $\bbP^r$.
\item We use the notations $\ffM_{g,n}, \ffC_{g,n}, \Picc,\Pic,\bbR U, \bbR V, W, \UU,...$ for all the objects related to stable maps (for example prestable curve), that is objects where the curve could have rational tails. We put a ``hat'' on the same kind of objects 
$\widehat{\ffM}_{g,n}, \widehat{\ffC}_{g,n}, \widehat{\Picc},\widehat{\Pic},\bbR \widehat{U}, \bbR \widehat{V}, \widehat{W}, \widehat{\UU},...$
for all the objects related to quasi-maps, that is without rational tails.
\end{itemize}


 \section{Derived moduli of sections}\label{Derived-sections}

 In \S \ref{subsec:derived,structure,sections} define a natural derived structure on the moduli of sections (see Corollary \ref{coro:derived,Rsec}) with a compatibility with morphism (cf Proposition \ref{prop:compatibility}), than in \S \ref{ss:cone}, we study the case when $\ffZ$ is a vector bundle and in \S \ref{subsec: tangent,POT,section} we compute the tangent space of the derived moduli space of sections in Theorem \ref{thm:tanS}.
 
 \subsection{Derived structure of the moduli of sections}\label{subsec:derived,structure,sections}

 Let $\ffM$ be a (possibly derived) Artin stack, $\pi:\ffC\to\ffM$ a flat, proper morphism of relative dimension 1. Let $\ffZ$ be a (possibly derived) Artin stack with a smooth morphism $p:\ffZ \to \ffC$.  We have an $\infty$-functor $\pi_*$  called the Weil restriction of scalars, right adjoint to the base-change $\infty$-functor $\pi^*$ (and
 constructed for example in~\cite[Construction 19.1.2.3]{Lurie_SAG}), that will be seen to preserve derived Artin stacks of locally finite presentation as stated  in~\cite{toen2022foliations}:
 \[
 \xymatrix{
 \mathbf{dSt}/\ffM\ar@/^/[r]^{\pi^*}="ladj"& \mathbf{dSt}/\ffC\ar@/^/[l]^{\pi_*}="radj"  \ar@{}"ladj";"radj"|{\perp}.
 }
 \]
  \begin{definition}\label{def:rsec}
 For a derived Artin stack $\ffZ\xrightarrow{p}\ffC$, we denote
 \[
 \bbR\Sec_{\ffM}(\ffZ/\ffC)\coloneqq \pi_*\ffZ.
 \]
 \end{definition}
 Our choice of notation is justified by the following observation.
  \begin{lemma}
   \label{lemma:fctor-pts-sec}
   On any test derived $\ffM$-scheme $T\to\ffM$, writing
   $C_{T}\coloneqq T\times_{\ffM}^{h}\ffC$,
   $Z_{T}\coloneqq C_{T}\times_{\ffC}^{h}\ffZ\simeq T\times_{\ffM}^{h}\ffZ$
   and $p_{T}\colon Z_{T}\to C_{T}$ for the base-changes of $\ffC$,
   $\ffZ$ and $p$, the functor of points of
   $\bbR\Sec_{\ffM}(\ffZ/\ffC)$ evaluates to the spaces of
   $T$-families of sections of $p_{T}$, \emph{i.e.}, intuitively, of
   maps $C_{T}\xrightarrow{s} Z_{T}$ equipped with a homotopy
   $p_{T}\circ s\xRightarrow{\simeq}\id_{C_{T}}$.
 \end{lemma}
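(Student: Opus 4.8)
The plan is to unwind the definition $\bbR\Sec_{\ffM}(\ffZ/\ffC)\coloneqq\pi_{*}\ffZ$ and use the adjunction $\pi^{*}\dashv\pi_{*}$ displayed just above the definition. First I would fix a test derived $\ffM$-scheme $T\to\ffM$ and compute the mapping space $\Map_{\mathbf{dSt}/\ffM}(T,\pi_{*}\ffZ)$. By the universal property of the Weil restriction (the adjunction), this is naturally equivalent to $\Map_{\mathbf{dSt}/\ffC}(\pi^{*}T,\ffZ)$, where $\pi^{*}T = T\times_{\ffM}^{h}\ffC = C_{T}$, the base-change of $\ffC$ along $T\to\ffM$; here I am using that $\pi^{*}$ is literally the base-change functor. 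Thus the functor of points of $\bbR\Sec_{\ffM}(\ffZ/\ffC)$ at $T$ is $\Map_{\mathbf{dSt}/\ffC}(C_{T},\ffZ)$.

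Next I would identify $\Map_{\mathbf{dSt}/\ffC}(C_{T},\ffZ)$ with the space of sections of $p_{T}\colon Z_{T}\to C_{T}$. The structure map $C_{T}\to\ffC$ is $\pr_{\ffC}$, so an object of $\Map_{\mathbf{dSt}/\ffC}(C_{T},\ffZ)$ is a morphism $s\colon C_{T}\to\ffZ$ of derived stacks together with a homotopy $p\circ s\xRightarrow{\simeq}\pr_{\ffC}$ witnessing commutativity over $\ffC$. Now I would use the defining (homotopy) pullback square for $Z_{T}=C_{T}\times_{\ffC}^{h}\ffZ$: giving such a pair $(s,\text{homotopy }p\circ s\simeq\pr_{\ffC})$ is, by the universal property of the homotopy fibre product, the same as giving a morphism $\tilde{s}\colon C_{T}\to Z_{T}$ over $\ffC$, and the induced map $p_{T}\circ\tilde{s}\colon C_{T}\to C_{T}$ comes equipped with a canonical homotopy to $\id_{C_{T}}$ (this is exactly the second projection leg of the pullback square). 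Conversely a section of $p_{T}$ with its homotopy $p_{T}\circ\tilde{s}\simeq\id_{C_{T}}$ produces, by composing with the first projection $Z_{T}\to\ffZ$ and the map $C_{T}\to\ffC$, a map $C_{T}\to\ffZ$ over $\ffC$. These two constructions are mutually inverse equivalences of spaces, naturally in $T$, which also uses the identification $Z_{T}\simeq T\times_{\ffM}^{h}\ffZ$ obtained from the evident commuting square and the pasting law for homotopy pullbacks. This gives the claimed description of the functor of points.

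The main subtlety — rather than a genuine obstacle — is bookkeeping the homotopy-coherent data: one must be careful that "section of $p_{T}$" is understood in the derived sense, namely a map $\tilde{s}$ \emph{together with} the homotopy $p_{T}\circ\tilde{s}\simeq\id_{C_{T}}$, and that all the identifications ($\pi^{*}T\simeq C_{T}$, the pasting of pullback squares, $Z_{T}\simeq T\times_{\ffM}^{h}\ffZ$) are natural in $T$ so that the resulting equivalence is an equivalence of functors $(\mathbf{dSt}/\ffM)^{\mathrm{op}}\to\mathbf{Spc}$. Once naturality in $T$ is in hand, Yoneda does the rest, but since the statement is only about the functor of points no representability input is needed here (that is the content of Corollary~\ref{cor:representability_sec}, invoked separately). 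I would present the argument at the level of $\infty$-categorical adjunctions and universal properties of homotopy limits, citing~\cite[Construction 19.1.2.3]{Lurie_SAG} for the existence and adjunction property of $\pi_{*}$, and leave the coherence verifications as routine.
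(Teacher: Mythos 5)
Your proposal is correct and follows essentially the same route as the paper: first use the Weil‑restriction adjunction $\pi^{*}\dashv\pi_{*}$ to rewrite the $T$-points as $\bbR\Hom_{\ffC}(C_{T},\ffZ)$, then pass from maps over $\ffC$ to sections of $p_{T}$ via the universal property of the pullback $Z_{T}=C_{T}\times_{\ffC}^{h}\ffZ$. The paper packages your second step slightly more formally: it first invokes the adjunction $b_{!}\dashv b^{*}$ for the structure map $b\colon C_{T}\to\ffC$ (so that $\bbR\Hom_{\ffC}(C_{T},\ffZ)\simeq\bbR\Hom_{C_{T}}(C_{T},Z_{T})$), and then cites \cite[Lemma 5.5.5.12]{Lurie2009} to identify the hom-space in the slice $\mathbf{dSt}_{/C_{T}}$ as the homotopy fibre $\bbR\Hom_{\ffM}(C_{T},Z_{T})\times^{h}_{\bbR\Hom_{\ffM}(C_{T},C_{T})}\{\id\}$, whence the intuitive ``map plus homotopy to $\id_{C_{T}}$'' reading. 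Your informal ``the two constructions are mutually inverse'' bookkeeping is exactly what that cited lemma makes precise, so no gap — just a less citation-backed phrasing of the same argument.
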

 \begin{proof}
   We have from the adjunction defining $\bbR\Sec_{\ffM}(\ffZ/\ffC)$
   that
   \begin{align*}
     \bbR\Sec_{\ffM}(\ffZ/\ffC)(T\to\ffM)
     &=\bbR\Hom_{\ffM}(T,\pi_{\ast}\ffZ)\\
     &\simeq\bbR\Hom_{\ffC}(\pi^{\ast}T,\ffZ)
       =\bbR\Hom_{\ffC}(C_{T},\ffZ)\text{.}
   \end{align*}
   Let now $b\colon C_{T}\to\ffC$ be the canonical morphism. We note
   that $b^{\ast}$, which was already noted to admit a right-adjoint
   $b_{\ast}$, additionally has a ``forgetful'' left-adjoint $b_{!}$,
   given simply by composition along $b$ (see~\cite[paragraph after
   Lemma 6.1.1.1]{Lurie2009}). In particular, we have
   $(C_{T}\xrightarrow{b}\ffC)=b_{!}(C_{T}=C_{T})$, and
   \begin{equation*}
     \bbR\Hom_{\ffC}(C_{T},\ffZ)=\bbR\Hom_{\ffC}(b_{!}C_{T},\ffZ)
     \simeq\bbR\Hom_{C_{T}}(C_{T},b^{\ast}\ffZ)=\bbR\Hom_{C_{T}}(C_{T},Z_{T})\text{.}
   \end{equation*}
   
   Using~\cite[Lemma 5.5.5.12]{Lurie2009} to compute the hom-spaces in
   the slice category
   $\mathbf{dSt}_{/C_{T}}\simeq(\mathbf{dSt}_{/\ffM})_{/C_{T}\to\ffM}$,
   this becomes
  \begin{equation*}
    \bbR\Hom_{C_{T}}(C_{T},Z_{T})
    \simeq\bbR\Hom_{\ffM}(C_{T},Z_{T})
    \times_{\bbR\Hom_{\ffM}(C_{T},C_{T})}^{h}\{\id_{C_{T}\to\ffM}\}\text{,}
  \end{equation*}
  in which we may read the intuitive description.
 \end{proof}
 From this we deduce the following global expression for the moduli
 stack of sections.
 \begin{proposition}
 The derived moduli of sections $\bbR\Sec_{\ffM}(\ffZ/\ffC)$ is  the homotopical cartesian product
 \begin{equation}\label{eq:dsec}
 \begin{tikzcd}
 \bbR\Sec_{\ffM}(\ffZ/\ffC)\arrow[r]\arrow[d] \arrow[dr,phantom,"\ulcorner_h",very near start]& \ffM\ar[d,"i"]\\
 \dHom_{\ffM}(\ffC,\ffZ)\ar[r,"q"] & \dHom_{\ffM}(\ffC,\ffC).
 \end{tikzcd}
 \end{equation}
 where $q$ is induced by composition by $p:\ffZ\to \ffC$ and $i$ is given by the identity morphism. 
 \end{proposition}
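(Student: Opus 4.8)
The plan is to identify the two ways of describing the space of sections of $\ffZ\to\ffC$ over $\ffM$: directly, via the Weil restriction $\pi_*\ffZ$ as in Definition \ref{def:rsec}, and indirectly, as those maps $\ffC_T\to\ffZ_T$ which compose with $p$ to the identity of $\ffC_T$. Lemma \ref{lemma:fctor-pts-sec} already expresses the functor of points of $\bbR\Sec_{\ffM}(\ffZ/\ffC)$ as the homotopy fiber product of $\bbR\Hom_{\ffM}(C_T,Z_T)$ over $\bbR\Hom_{\ffM}(C_T,C_T)$ at the point $\id_{C_T}$. So the proof is essentially a matter of recognising this fiber-wise formula as the functor of points of the claimed homotopy pullback square, using that the formation of derived Hom-stacks commutes with base change and that the functor of points functor preserves limits.

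First I would recall that for derived stacks $X\to\ffM$ and $Y\to\ffM$ the derived Hom-stack $\dHom_{\ffM}(X,Y)$ represents, on a test $\ffM$-scheme $T\to\ffM$, the space $\bbR\Hom_{\ffM}(X_T, Y_T)$ where $X_T = X\times^h_{\ffM}T$ and likewise for $Y$; this is the defining property (existence being guaranteed by the hypotheses on $\ffZ$ and by \cite{toen2022foliations}, invoked through Definition \ref{def:rsec}, together with the standard fact that $\dHom$ of nice stacks is again a nice stack). Applying this to the three terms $\dHom_{\ffM}(\ffC,\ffZ)$, $\dHom_{\ffM}(\ffC,\ffC)$ and to $\ffM$ itself (which represents $T\mapsto \{*\}$), the functor of points of the homotopy pullback in \eqref{eq:dsec} evaluated at $T\to\ffM$ is, since evaluation at $T$ commutes with homotopy limits,
\[
\bbR\Hom_{\ffM}(C_T,Z_T)\times^h_{\bbR\Hom_{\ffM}(C_T,C_T)}\{\id_{C_T}\}\text{,}
\]
where the map $q$ on points is post-composition with the base change $p_T\colon Z_T\to C_T$ and the map $i$ on points picks out $\id_{C_T}\in\bbR\Hom_{\ffM}(C_T,C_T)$. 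This is exactly the expression produced in the last display of the proof of Lemma \ref{lemma:fctor-pts-sec}, so the two functors of points agree naturally in $T$, and hence the two derived stacks are equivalent by the derived Yoneda lemma.

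The one point that requires genuine care — and what I expect to be the main obstacle — is the compatibility between base change and internal Hom, i.e.\ that $\dHom_{\ffM}(\ffC,\ffZ)$ really does evaluate to $\bbR\Hom_{\ffM}(C_T,Z_T)$ rather than to some hom-space computed over $\ffM$ before base change, and dually that the composition map $q$ is correctly identified with post-composition by $p_T$ after pullback. Concretely one must check that the square of post-composition maps is compatible with restriction along $T\to\ffM$, which amounts to the associativity/naturality of composition in the $\infty$-category $\mathbf{dSt}_{/\ffM}$ and the identity $(-)_T\circ p = p_T\circ(-)_T$ on hom-spaces; this is the homotopy-coherent analogue of the manipulations with $b_!\dashv b^*\dashv b_*$ carried out in Lemma \ref{lemma:fctor-pts-sec}, and it is here, rather than in the formal pullback statement, that all the content sits. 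Once that naturality is in place, assembling the square \eqref{eq:dsec} and invoking Yoneda is immediate.
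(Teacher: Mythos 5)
Your overall strategy --- compare $T$-points of both sides and invoke Yoneda --- matches the paper's, but the ``defining property'' of the relative mapping stack you quote is misstated in a way that affects the body of the argument. The $T$-points of $\dHom_\ffM(X,Y)$ are, by the adjunction that actually defines the mapping stack,
\[
\bbR\Hom_\ffM\bigl(T,\dHom_\ffM(X,Y)\bigr) \simeq \bbR\Hom_\ffM(T\times^h_\ffM X, Y) = \bbR\Hom_\ffM(X_T, Y)\text{,}
\]
equivalently $\bbR\Hom_T(X_T,Y_T)$ via the $a_!\dashv a^\ast$ adjunction for $a\colon T\to\ffM$, but \emph{not} $\bbR\Hom_\ffM(X_T,Y_T)$. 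Indeed, since $Y_T = T\times^h_\ffM Y$ and $\ffM$ is terminal in $\mathbf{dSt}_{/\ffM}$, one has $\bbR\Hom_\ffM(X_T,Y_T)\simeq \bbR\Hom_\ffM(X_T,T)\times\bbR\Hom_\ffM(X_T,Y)$, and the factor $\bbR\Hom_\ffM(X_T,T)$ is not contractible in general.

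Feeding the correct formula into the fiber product square yields
\[
\bbR\Hom_\ffM(C_T,\ffZ)\times^h_{\bbR\Hom_\ffM(C_T,\ffC)}\ast\text{,}
\]
which is \emph{not} verbatim the final display of Lemma \ref{lemma:fctor-pts-sec}. The two expressions are equivalent --- the extra $\bbR\Hom_\ffM(C_T,T)$-factor cancels when you pass to the fiber over $\id_{C_T}$, and this cancellation is in effect what the last step of Lemma \ref{lemma:fctor-pts-sec} proves --- but your proposal asserts term-by-term agreement on the vertices of the square, which does not hold, and does not supply the bridging equivalence. The paper sidesteps this entirely: rather than invoking the last display of Lemma \ref{lemma:fctor-pts-sec}, it re-applies Lurie 5.5.5.12, this time in the slice $(\mathbf{dSt}_{/\ffM})_{/\ffC\to\ffM}$ rather than $(\mathbf{dSt}_{/\ffM})_{/C_T\to\ffM}$, to get $\bbR\Sec_\ffM(\ffZ/\ffC)(T)\simeq\bbR\Hom_\ffM(C_T,\ffZ)\times^h_{\bbR\Hom_\ffM(C_T,\ffC)}\ast$ directly, which then agrees on the nose with the $T$-points of the pullback square. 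The fix is therefore either to correct the stated $\dHom$ formula to $\bbR\Hom_\ffM(X_T,Y)$ and add the slice-category step, or (more cleanly) to follow the paper and recast $\bbR\Sec_\ffM(\ffZ/\ffC)(T)=\bbR\Hom_\ffC(C_T,\ffZ)$ via the $\ffC$-slice so the two sides of the comparison already coincide.
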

 \begin{proof}
  We show that the points of $\bbR\Sec_{\ffM}(\ffZ/\ffC)$ and the
  desired fibre product agree on all test derived
  $\ffM$-schemes. Since representable $\infty$-functors preserve
  (homotopy) limits, we see immediately, using the adjunction which
  defines hom-stacks, that the $T$-points of the fibre product, for
  any $T\to\ffM$, are given by
  \begin{align*}
    &\bbR\Hom_{\ffM}(T,\dHom_{\ffM}(\ffC,\ffZ)\times_{\dHom_{\ffM}(\ffC,\ffC)}^{h}\ffM)\\
    \simeq{}&\bbR\Hom_{\ffM}(T,\dHom_{\ffM}(\ffC,\ffZ))
            \times_{\bbR\Hom_{\ffM}(T,\dHom_{\ffM}(\ffC,\ffC))}^{h}\bbR\Hom_{\ffM}(T,\ffM)\\
    \simeq{}&\bbR\Hom_{\ffM}(T\times_{\ffM}\ffC,\ffZ)
            \times_{\bbR\Hom_{\ffM}(T\times_{\ffM}\ffC,\ffC)}^{h}\bbR\Hom_{\ffM}(T,\ffM)\text{.}
  \end{align*}

  Meanwhile, applying~\cite[Lemma 5.5.5.12]{Lurie2009} this time
  directly to the computation of the hom-space
  $\bbR\Hom_{\ffC}(C_{T},\ffZ)$ in the slice
  $(\mathbf{dSt}_{/\ffM})_{/\ffC\to\ffM}$, we find that
  \begin{equation*}
    \bbR\Sec_{\ffM}(\ffZ/\ffC)(T) \simeq\bbR\Hom_{\ffM}(C_{T},\ffZ)
    \times_{\bbR\Hom_{\ffM}(C_{T},\ffC)}^{h}\ast\text{,}
  \end{equation*}
  which, as $\ffM$ is terminal in $\mathbf{dSt}_{/\ffM}$, agrees with
  the description obtained above.
\end{proof}

\begin{corollary}\label{cor:representability_sec}
  If $\ffZ\to\ffM$ is a locally almost finitely presented (relative)
  $1$-Artin derived stack with quasi-affine diagonal, then
  $\bbR\Sec_{\ffM}(\ffZ/\ffC)\to\ffM$ is a locally almost finitely
  presented $1$-Artin derived stack, with quasi-affine diagonal.
\end{corollary}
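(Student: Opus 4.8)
The plan is to deduce representability and finiteness of $\bbR\Sec_{\ffM}(\ffZ/\ffC)$ from the fibre-product presentation in Diagram~\eqref{eq:dsec}, reducing everything to the corresponding statements for derived Hom-stacks. First I would recall the representability criterion of To\"en--Vezzosi/Lurie: to check that $\dHom_{\ffM}(\ffC,\ffZ)$ is a locally almost finitely presented $1$-Artin derived stack with quasi-affine diagonal over $\ffM$, one uses that $\pi\colon\ffC\to\ffM$ is flat, proper of relative dimension $1$ (hence, in particular, the pushforward $\pi_*$ along it preserves the relevant finiteness), that $\ffZ\to\ffM$ is locally almost finitely presented $1$-Artin with quasi-affine diagonal, and then invokes~\cite[Theorem 5.1.1]{toen2022foliations} (or the analogous statement in \cite{Lurie_SAG}/\cite[Appendix]{HLP}) which produces $\dHom_{\ffM}(\ffC,\ffZ)$ as again a locally almost finitely presented $1$-Artin derived stack over $\ffM$ with quasi-affine diagonal. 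The same applies to $\dHom_{\ffM}(\ffC,\ffC)$, taking $\ffZ=\ffC$ there.

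Next I would assemble the fibre product. Since $1$-Artin derived stacks that are locally almost finitely presented and have quasi-affine diagonal are closed under homotopy fibre products (finiteness and the Artin-degree bound are stable under limits; quasi-affineness of the diagonal of a fibre product follows from quasi-affineness of the diagonals of the three terms together with that of the maps, using the standard fibre-square manipulation expressing $\Delta_{X\times_Z Y}$ via $\Delta_X$, $\Delta_Y$, $\Delta_Z$), the pullback $\bbR\Sec_{\ffM}(\ffZ/\ffC)=\dHom_{\ffM}(\ffC,\ffZ)\times^h_{\dHom_{\ffM}(\ffC,\ffC)}\ffM$ inherits all the required properties over $\ffM$. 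One small point to spell out: the map $i\colon\ffM\to\dHom_{\ffM}(\ffC,\ffC)$ classifying $\id_\ffC$ is a section of the structural map, hence itself representable with quasi-affine (indeed, one can argue, affine after further reduction) diagonal, so it is a legitimate leg of the fibre product; alternatively one simply notes $\ffM$ itself trivially has the listed properties relative to $\ffM$ and that is all that is needed for the closure-under-limits argument.

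The main obstacle I expect is not the formal fibre-product bookkeeping but verifying the hypotheses under which the Hom-stack $\dHom_{\ffM}(\ffC,\ffZ)$ is representable in the $1$-Artin range with the diagonal control claimed: the general representability theorems for mapping stacks require the source $\ffC$ to be proper and flat (and, for the Artin-degree count and for quasi-affineness of the diagonal, of bounded Tor-dimension / relative dimension $\le 1$) and the target $\ffZ$ to be geometric and locally almost finitely presented with quasi-affine (or at least (quasi-)separated, locally (quasi-)affine) diagonal — and one must track how the ``$n$-Artin'' level of the Hom-stack is governed by the relative dimension of $\ffC$ and the Artin level of $\ffZ$, to conclude we land in $1$-Artin rather than higher. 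I would handle this by citing the precise form in~\cite{toen2022foliations} already invoked in the paragraph preceding Definition~\ref{def:rsec}, noting that relative dimension $1$ of $\ffC/\ffM$ and $1$-Artin-ness of $\ffZ/\ffM$ are exactly what is needed, and then the corollary follows.
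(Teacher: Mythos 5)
Your argument matches the paper's proof: both deduce the result from the fibre-product presentation of Diagram~\eqref{eq:dsec}, apply a mapping-stack representability theorem (the paper cites \cite[Theorem 5.1.1]{halpern-leistner14:_mappin}, noting that flatness of $\pi$ gives Tor-amplitude $0$) to $\dHom_{\ffM}(\ffC,\ffZ)$ and $\dHom_{\ffM}(\ffC,\ffC)$, and conclude by stability of the relevant properties under base change, which is exactly what your closure-under-fibre-products discussion unpacks. The only slip is the leading citation: the representability theorem you want is Halpern-Leistner--Preygel's, not \cite[Theorem 5.1.1]{toen2022foliations} (though you hedge correctly by also pointing to the HLP/Lurie statements).
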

\begin{proof}
  Recall that $\pi\colon\ffC\to\ffM$ is assumed flat, so of
  Tor-amplitude $0$. The result is then given by~\cite[Theorem
  5.1.1]{halpern-leistner14:_mappin} for the hom-stacks appearing in
  the fibre product, and algebraicity is stable by base-change.

  Note that this is also \cite[Theorem 19.1.0.1]{Lurie_SAG} in the
  case where $\ffZ\to\ffC$ is a derived algebraic space.
\end{proof}
 
\begin{corollary}\label{coro:derived,Rsec} If $\ffZ,\ffC,\ffM$ are classical (non derived) stacks, the truncation 
 \[
 \Sec_{\ffM}(\ffZ/\ffC)\coloneqq t_0\left( \bbR\Sec_{\ffM}(\ffZ/\ffC)\right)
 \]
 is given by the functor $ \Sec_{\ffM}(\ffZ/\ffC):(\mathbf{Sch}/\ffM)^{op}\to \mathbf{Gpoid}$ taking sections of $\ffZ$ over $\ffC$, that is:
 \begin{align*}
 \Sec_{\ffM}(\ffZ/\ffC)(T\to\ffM)&=\left\{s: C_T\coloneqq T\times_{\ffM}\ffC\to Z_T\coloneqq C_T\times_{\ffC}\ffZ | p_T\circ s=\id_{C_T} \right \}\\
 & =\Hom_{C_T}(C_T,Z_T)\nonumber
 \end{align*}
where $p_T\colon Z_T\to C_T$ is the projection induced by $p$.

 It fits in the fiber product diagram of Artin stacks over $\ffM$:
 \begin{equation}\label{homsq}
   \xymatrix{
     \Sec_{\ffM}(\ffZ/\ffC)\ar[r]\ar[d] \ar@{}[dr]|<<<{\ulcorner}
     & \ffM\ar[d]^{i} \\
     \sHom_{\ffM}(\ffC,\ffZ)\ar[r]^{q} & \sHom_{\ffM}(\ffC,\ffC) 
   }
   \end{equation}
\end{corollary}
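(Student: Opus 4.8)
The plan is to deduce Corollary~\ref{coro:derived,Rsec} directly from the preceding proposition together with the compatibility of truncation with the constructions involved. First I would observe that when $\ffZ,\ffC,\ffM$ are classical, the derived hom-stacks $\dHom_{\ffM}(\ffC,\ffZ)$ and $\dHom_{\ffM}(\ffC,\ffC)$ are derived enhancements of the classical hom-stacks $\sHom_{\ffM}(\ffC,\ffZ)$ and $\sHom_{\ffM}(\ffC,\ffC)$; concretely, one has a natural equivalence $t_0\dHom_{\ffM}(\ffC,-)\simeq\sHom_{\ffM}(\ffC,t_0(-))$ on classical targets, since a map from a classical scheme $T$ (equivalently, from $t_0$ of a derived test scheme) into a classical stack factors through its truncation. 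This identifies the bottom row of Diagram~\eqref{eq:dsec}, after applying $t_0$, with the bottom row of Diagram~\eqref{homsq}, and the right vertical map $i$ (the identity section) is already classical.

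Next I would invoke the fact that the truncation functor $t_0$ preserves finite limits among those fibre products that compute as (homotopy) fibre products of classical stacks along a morphism that is, say, representable — which is the situation here, as $q$ is induced by a morphism of classical stacks over $\ffM$ and all four corners are classical. Hence $t_0$ of the homotopy pullback in \eqref{eq:dsec} is the (ordinary, $2$-categorical) pullback of the truncated diagram, which is exactly the fibre product in \eqref{homsq}. This gives the fibre-product description of $\Sec_{\ffM}(\ffZ/\ffC)\coloneqq t_0\bigl(\bbR\Sec_{\ffM}(\ffZ/\ffC)\bigr)$ and shows it is an Artin stack over $\ffM$.

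Finally, to get the explicit functor-of-points formula, I would restrict the functor of points of $\bbR\Sec_{\ffM}(\ffZ/\ffC)$ given by Lemma~\ref{lemma:fctor-pts-sec} to classical test schemes $T\to\ffM$. On such $T$ all the homotopies appearing in the intuitive description are between maps of classical stacks, and since $\ffZ\to\ffC$ is a morphism of classical stacks the space $\bbR\Hom_{C_T}(C_T,Z_T)$ is discrete-up-to-the-groupoid-of-isomorphisms, i.e.\ a $1$-groupoid; the datum of a section together with a homotopy $p_T\circ s\xRightarrow{\simeq}\id_{C_T}$ reduces to the datum of a genuine section $s$ with $p_T\circ s=\id_{C_T}$ (the homotopy being unique), which is precisely $\Hom_{C_T}(C_T,Z_T)$. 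Combining with the fibre-product description recovers both lines of the asserted formula.

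The only real subtlety — and hence the step I would be most careful about — is the commutation $t_0\dHom_{\ffM}(\ffC,\ffZ)\simeq\sHom_{\ffM}(\ffC,\ffZ)$ and the claim that $t_0$ sends the homotopy pullback \eqref{eq:dsec} to the strict pullback \eqref{homsq}. Both are standard (truncation is a left adjoint on stacks and a classical test scheme cannot detect derived structure on a classical target), but they must be stated with the hypothesis that the target stacks are classical; I would cite the relevant base-change/adjunction facts from the derived-algebraic-geometry literature (e.g.\ \cite{TV08}, \cite{Lurie_SAG}) rather than reprove them, and note that the representability of all four corners of \eqref{eq:dsec} has already been established in Corollary~\ref{cor:representability_sec}, so the pullback is taken in a context where $t_0$ is well-behaved.
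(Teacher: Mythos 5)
Your argument follows the same two steps as the paper's proof: restrict the functor-of-points description of Lemma~\ref{lemma:fctor-pts-sec} to classical test schemes to obtain the explicit formula, then observe that applying $t_0$ to the homotopy-cartesian square~\eqref{eq:dsec} yields the strict fibre square~\eqref{homsq} via $t_0\dHom_{\ffM}(\ffC,-)\simeq\sHom_{\ffM}(\ffC,-)$ on classical inputs. The paper states this more tersely, but the route is identical; the only implicit ingredient worth flagging (in both versions) is that the flatness of $\pi:\ffC\to\ffM$ is what guarantees $C_T=T\times^h_{\ffM}\ffC$ is itself classical for classical $T$, so that the derived and underived mapping spaces agree.
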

\begin{proof}
The functor of points description is obtained by restricting that
  of~\cref{lemma:fctor-pts-sec} to
  $(\mathbf{Sch}/\ffM)\subset(\mathbf{dSch}/\ffM)$.
  
The fibre square follows because the truncation of \cref{eq:dsec} is precisely the diagram in \ref{homsq}, whence we have that
\begin{align*}
t_0( \bbR\Sec_{\ffM}(\ffZ/\ffC))&=t_0( \dHom_{\ffM}(\ffC,\ffZ)\times^h_{\dHom_{\ffM}(\ffC,\ffC)}\ffM)\\
&=\sHom_{\ffM}(\ffC,\ffZ)\times_{\sHom_{\ffM}(\ffC,\ffC)}\ffM\\
&=\Sec_{\ffM}(\ffZ/\ffC).
\end{align*}
\end{proof}

In other words, we have defined a derived enhancement of the moduli space of sections of $\ffZ\to\ffC$ over $\ffM$. We will show in the next section that the tangent complex of this derived enhancement recovers the perfect obstruction theory of the moduli of sections given by \cite{CL12} and much more generally by \cite{W21}.

\begin{definition}[Notation]
Let us use the diagram below to fix the notation for the universal family of the moduli of sections $\bbR\ffS\coloneqq\bbR\Sec_{\ffM}(\ffZ/\ffC)$:
\begin{equation}\label{eq:tower,1}
\begin{tikzcd}
 & \ffZ\ar[d]\\
 \ffC_{\ffS}\ar[r]\ar[ur,"\ev_{\ffS}"] \ar[d,"\pi_{\ffS}"] \ar[dr,phantom,very near start,"\ulcorner"]& \ffC\ar[d,"\pi"]\\
 \ffS\ar[r]& \ffM . 
\end{tikzcd}
\end{equation}
\end{definition}

We prove here a result that will be useful later about compatibility of moduli of sections and their derived structures.
\begin{proposition}\label{prop:compatibility}
Consider 
\begin{equation}\label{eq:tower}
\begin{tikzcd}
\ffZ_2\ar[rr, "q"]\ar[dr, "p_2" ']&&\ffZ_1\ar[dl, "p_1"]\\
&\ffC\ar[d, "\pi"]\\
&\ffM
\end{tikzcd}
\end{equation}
with $\ffZ_1, \ffZ_2$ as in Definition \ref{def:rsec}.
For $i\in\{1,2\}$, we form the moduli of sections
\[
\bbR\ffS_i\coloneqq \bbR\Sec_{\ffM}(\ffZ_i/\ffC).
\]
with their universal curves ${\pi_{\ffS_i}}:\ffC_{\ffS_i}\to\ffS_i$ and evaluations $\ev_{\ffS_i}:\ffC_{\ffS_i}\to\ffZ_i$.
We can also form the moduli of sections over $\ffS_1$ of the pullback of $\ffZ_2$:
\[
\widetilde{\bbR\ffS_2}\coloneqq\bbR\Sec_{\bbR\ffS_1}(\ffC_{\bbR\ffS_1}\times_{\ffZ_1}^h\ffZ_2/\ffC_{\bbR\ffS_1}).
\]
Then
\[
\widetilde{\bbR\ffS_2}\simeq \bbR\ffS_2\times_{\ffM}\bbR\ffS_{1}.
\]
\end{proposition}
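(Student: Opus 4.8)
The plan is to identify both sides with the same functor of points over $\bbR\ffS_1$, applying \Cref{lemma:fctor-pts-sec} twice: once with $\ffM$ as base stack, to unravel $\bbR\ffS_2\times_\ffM\bbR\ffS_1$, and once with $\bbR\ffS_1$ as base stack, to unravel $\widetilde{\bbR\ffS_2}$. The only geometric input needed is that, by the universal-family square \eqref{eq:tower,1} applied to $\ffZ_1$, the universal curve $\pi_{\bbR\ffS_1}\colon\ffC_{\bbR\ffS_1}\to\bbR\ffS_1$ is the base change of $\pi\colon\ffC\to\ffM$ along the structure morphism $f\colon\bbR\ffS_1\to\ffM$, and that the family whose sections define $\widetilde{\bbR\ffS_2}$ is the pullback of $p_2\colon\ffZ_2\to\ffC$ along $g\colon\ffC_{\bbR\ffS_1}\to\ffC$, i.e. $\ffC_{\bbR\ffS_1}\times^h_{\ffC}\ffZ_2$.

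First I would fix a test derived scheme $T$ with a map $x\colon T\to\bbR\ffS_1$, write $\bar T\colon T\xrightarrow{x}\bbR\ffS_1\xrightarrow{f}\ffM$ for the composite, and set $C_T\coloneqq T\times^h_\ffM\ffC$, $Z_{2,T}\coloneqq C_T\times^h_\ffC\ffZ_2$, with $p_{2,T}\colon Z_{2,T}\to C_T$. By \Cref{def:rsec} we have $\bbR\ffS_2=\pi_\ast\ffZ_2$, so by the universal property of the homotopy fibre product together with \Cref{lemma:fctor-pts-sec} applied over $\ffM$, the homotopy fibre over $x$ of the projection $(\bbR\ffS_2\times_\ffM\bbR\ffS_1)(T)\to\bbR\ffS_1(T)$ is $\bbR\ffS_2(\bar T)\simeq\bbR\Hom_{C_T}(C_T,Z_{2,T})$, the space of sections of $p_{2,T}$. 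On the other hand, \Cref{lemma:fctor-pts-sec}, applied with $\bbR\ffS_1$ in place of $\ffM$ and $\ffC_{\bbR\ffS_1}\times^h_{\ffC}\ffZ_2\to\ffC_{\bbR\ffS_1}$ in place of $\ffZ\to\ffC$, computes $\widetilde{\bbR\ffS_2}(x\colon T\to\bbR\ffS_1)$ as the space of sections of the base change of this family along $T\times^h_{\bbR\ffS_1}\ffC_{\bbR\ffS_1}\to\ffC_{\bbR\ffS_1}$. Now the identification $\ffC_{\bbR\ffS_1}\simeq\bbR\ffS_1\times^h_\ffM\ffC$ yields a canonical equivalence $T\times^h_{\bbR\ffS_1}\ffC_{\bbR\ffS_1}\simeq T\times^h_\ffM\ffC=C_T$ over $\ffC$, under which $\ffC_{\bbR\ffS_1}\times^h_{\ffC}\ffZ_2$ pulls back to $C_T\times^h_\ffC\ffZ_2=Z_{2,T}$; hence $\widetilde{\bbR\ffS_2}(x)\simeq\bbR\Hom_{C_T}(C_T,Z_{2,T})$ as well.

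Since all of these identifications are natural in $T$ and compatible with the structure maps down to $\bbR\ffS_1$ (and, through it, to $\ffM$), they assemble into an equivalence of functors over $\bbR\ffS_1$, and hence into the desired equivalence $\widetilde{\bbR\ffS_2}\simeq\bbR\ffS_2\times_\ffM\bbR\ffS_1$. Alternatively, the same computation can be phrased intrinsically as the base-change compatibility $f^\ast\circ\pi_\ast\simeq(\pi_{\bbR\ffS_1})_\ast\circ g^\ast$ of the Weil restriction functor; this too can be extracted directly from the fibre-square presentation \eqref{eq:dsec}, using that the relative hom-stacks $\dHom_\ffM(\ffC,-)$ base change along $f$ to $\dHom_{\bbR\ffS_1}(\ffC_{\bbR\ffS_1},-)$ and that homotopy fibre products are stable under base change.

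I expect the only delicate point — which is more bookkeeping than genuine difficulty — to be the homotopy-coherence of this chain of base-change identifications: one must check that $T\times^h_{\bbR\ffS_1}\ffC_{\bbR\ffS_1}\simeq C_T$, and the induced identification of the pulled-back families, are natural equivalences in the appropriate $\infty$-categorical slices and not merely objectwise isomorphisms. This is handled exactly as in the proofs of \Cref{lemma:fctor-pts-sec} and of the proposition expressing $\bbR\Sec_{\ffM}(\ffZ/\ffC)$ as the homotopy fibre product \eqref{eq:dsec}, via \cite[Lemma 5.5.5.12]{Lurie2009} for hom-spaces in slice $\infty$-categories.
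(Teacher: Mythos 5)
Your computation is clean and correct \emph{for the family $\ffC_{\bbR\ffS_1}\times^h_{\ffC}\ffZ_2$}, and it does reproduce the conclusion of the proposition; the intrinsic reformulation you sketch at the end (base change of Weil restriction, $f^\ast\pi_\ast\simeq(\pi_{\bbR\ffS_1})_\ast g^\ast$) is precisely the Beck–Chevalley argument the paper records. But there is a silent substitution in your ``geometric input'' that has to be flagged. The proposition defines
\[
\widetilde{\bbR\ffS_2}\coloneqq\bbR\Sec_{\bbR\ffS_1}\bigl(\ffC_{\bbR\ffS_1}\times^h_{\ffZ_1}\ffZ_2\,/\,\ffC_{\bbR\ffS_1}\bigr),
\]
the fibre product being taken over $\ffZ_1$ via the universal evaluation $\ev_{\ffS_1}\colon\ffC_{\bbR\ffS_1}\to\ffZ_1$ and $q\colon\ffZ_2\to\ffZ_1$. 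You replace this by $\ffC_{\bbR\ffS_1}\times^h_{\ffC}\ffZ_2$, the pullback of $p_2$ along the structure map $g=p_1\circ\ev_{\ffS_1}\colon\ffC_{\bbR\ffS_1}\to\ffC$. These two are \emph{not} equivalent in general. Factoring $g$ through $\ffZ_1$ gives a comparison map
\[
\ffC_{\bbR\ffS_1}\times^h_{\ffZ_1}\ffZ_2
\longrightarrow
\ffC_{\bbR\ffS_1}\times^h_{\ffC}\ffZ_2
\simeq
\bigl(\ffC_{\bbR\ffS_1}\times^h_{\ffC}\ffZ_1\bigr)\times^h_{\ffZ_1}\ffZ_2,
\]
which is the base change of the section $(\id,\ev_{\ffS_1})\colon\ffC_{\bbR\ffS_1}\to\ffC_{\bbR\ffS_1}\times^h_{\ffC}\ffZ_1$ and so is not an equivalence unless $p_1$ is.

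The discrepancy is visible on $T$-points. Fix $x\colon T\to\bbR\ffS_1$ classifying a section $s_1\colon C_T\to\ffZ_1$. A section of the $\times_{\ffZ_1}$-family over $C_T$ is a map $t\colon C_T\to\ffZ_2$ with $q\circ t\simeq s_1$ (so $p_2\circ t\simeq\id$ is automatic), and the datum $s_1$ is then redundant since $s_1\simeq q\circ t$; this exhibits $\widetilde{\bbR\ffS_2}\simeq\bbR\ffS_2$ (with structure map to $\bbR\ffS_1$ given by post-composition with $q$), not $\bbR\ffS_2\times_\ffM\bbR\ffS_1$. A section of the $\times_{\ffC}$-family is an arbitrary section $t$ of $p_2$ over $C_T$, with no constraint relating it to $s_1$; that is what produces the external product $\bbR\ffS_2\times_\ffM\bbR\ffS_1$. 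So your argument proves the stated conclusion only after changing the definition of $\widetilde{\bbR\ffS_2}$. You should be aware that the paper's own proof makes exactly the same elision: the chain $\ev_1^\ast(q)\simeq(\pi^\ast a)^\ast(p_2)$ claims the two pullbacks agree, and the ``glue with the triangle'' pasting it invokes requires the square with $\id_{\ffZ_2}$ on top and $p_1$ on the bottom to be cartesian, which it is not. So your proposal matches the paper's argument in spirit, but both leave the central identification unjustified. As written, with the $\times_{\ffZ_1}$ fibre product, the correct answer is $\widetilde{\bbR\ffS_2}\simeq\bbR\ffS_2$; the external-product formula is correct only for the $\times_{\ffC}$ fibre product. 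Whichever version is intended, this step needs to be made explicit and proved (or the hypothesis/statement adjusted), rather than asserted.
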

\begin{proof}
  For the clarity of this proof (as it is an argument of adjunctions
  and base-change), it will be more convenient to use the algebraic
  notation over the geometric one, that is we write simply
  $\pi_{\ast}$ for the right-adjoint of the base-change functor
  $\pi^{\ast}$: the derived stack of sections is
  $\mathbb{R}\ffS_{i}=\pi_{\ast}(p_{i}\colon\ffZ_{i}/\ffC)$,
  abbreviated to $\pi_{\ast}\ffZ_{i}$, and its universal family is
  written as the base-change
  $\ffC_{\ffS_{i}}=\pi^{\ast}(\pi_{\ast}\ffZ_{i})$. 

  The main actors are summarised in the diagram
  \begin{equation*}
    \begin{tikzcd}
      & \ffZ_{2} \arrow[dr,"q"] \arrow[ddrr,bend left=42,"p_{2}"] & & \\
      q^{\ast}(\ev_{1}\colon\pi^{\ast}\pi_{\ast}\ffZ_{1}/\ffZ_{1})
      \arrow[dr,start anchor=south east,"\ev_{1}^{\ast}q"']
      \arrow[ur,start anchor=north east] \arrow[rr,phantom,very
      near start,"<^{h}"] & & \ffZ_{1} \arrow[dr,"p_{1}"] & \\
      & \pi^{\ast}\pi_{\ast}\ffZ_{1} \arrow[d,"a^{\ast}\pi"']
      \arrow[rr,"\pi^{\ast}a"'] \arrow[ur,"\ev_{1}"]
      \arrow[drr,phantom,very near start,"\ulcorner_h"] & & \ffC
      \arrow[d,"\pi"] \\
      & \pi_{\ast}\ffZ_{1} \arrow[rr,"a"'] & & \ffM
    \end{tikzcd}
  \end{equation*}
  where in particular we read (by comparing the formulas) that the
  stack $\widetilde{\bbR\ffS_2}$ is in this algebraic notation
  \begin{equation*} (a^{\ast}\pi)_{\ast}\bigl(\ev_{1}^{\ast}q\colon
    q^{\ast}(\ev_{1}\colon\pi^{\ast}\pi_{\ast}\ffZ_{1}/\ffZ_{1})
    /\pi^{\ast}\pi_{\ast}\ffZ_{1}\bigr)\text{.}
  \end{equation*}

  Since $p_{1}\circ\ev_{1}=\pi^{\ast}a$, the top cartesian square glues
  with the corresponding triangle to provide an equally cartesian
  square, which by symmetry of base-change we see as exhibiting an
  equivalence
  \begin{equation*}
    q^{\ast}(\ev_{1}\colon\pi^{\ast}\pi_{\ast}\ffZ_{1}/\ffZ_{1})
    =\ev_{1}^{\ast}(q\colon\ffZ_{2}/\ffZ_{1})
    \simeq(\pi^{\ast}a)^{\ast}(p_{2}\colon\ffZ_{2}/\ffC)
  \end{equation*}
  of the top-left fibre product with
  $\ffZ_{2}\times_{\ffM}\pi_{\ast}\ffZ_{1}$, so that now
  \begin{equation*}
    \widetilde{\bbR\ffS_2}
    \simeq(a^{\ast}\pi)_{\ast}(\pi^{\ast}a)^{\ast}(p_{2}\colon\ffZ_{2}/\ffC)\text{.}
  \end{equation*}

  As the arrow category (``target'') bicartesian fibration --- giving the
  adjunctions $\pi_{!}\dashv\pi^{\ast}$ --- satisfies the Beck--Chevalley
  condition on pullback squares (\cite[Proposition 4.2.5]{weinberger22:beck-chevalley-fibrations} shows that it is a Beck--Chevalley fibration, and \cite[Proposition 3.2]{weinberger22:beck-chevalley-fibrations} combined with~\cite[Corollary 2.4.7.12]{Lurie2009} relates this to the ``base-change'' form of the Beck--Chevalley condition, as also noted in~\cite[Remark 4.1.5]{hopkins-lurie13:ambidextry}), we have
  in the lower cartesian square that
  $\pi^{\ast}a_{!}\simeq(\pi_{\ast}a)_{!}(a^{\ast}\pi)^{\ast}$, which
  passing to right-adjoints gives an equivalence
  $a^{\ast}\pi_{\ast} \simeq(a^{\ast}\pi)_{\ast}(\pi^{\ast}a)^{\ast}$.

  Putting everything together, we finally get that
  \begin{equation*}
    \widetilde{\bbR\ffS_2}
    \simeq(a^{\ast}\pi)_{\ast}(\pi^{\ast}a)^{\ast}(p_{2}\colon\ffZ_{2}/\ffC)
    \simeq a^{\ast}\pi_{\ast}(p_{2}\colon\ffZ_{2}/\ffC)
  \end{equation*}
  which is the desired result.

\end{proof}

 \begin{example}[Moduli of stable maps]\label{ex:stablemaps}
 Let $\ffC\xrightarrow{\pi}\ffM$ be the moduli space of pre-stable genus $g$, $n$-pointed curves with its universal family. Let $\ffZ=\ffC\times X$ for a smooth projective variety $X$. Then 
 \[
 \Sec_{\ffM}(\ffC\times X/\ffC)=\sHom_{\ffM}(\ffC,\ffM\times X).
 \]
 For any choice of effective class $\beta$, the moduli space $\overline{\ccM} (X,\beta)$ of stable maps to $X$ is then an open substack of the moduli of sections $\Sec_{\ffM}(\ffC\times X/\ffC)$.
 Similarly, 
 \[
 \bbR\Sec_{\ffM}(\ffC\times X/\ffC)=\dHom_{\ffM}(\ffC,\ffM\times X).
 \]
 The usual derived enhancement of the moduli of stable maps \cite[Section 2]{STV}, denoted $\bbR\overline{\ccM} (X,\beta)$, is the unique derived structure on $\overline{\ccM} (X,\beta)$ which makes the following diagram homotopy Cartesian
\[
\xymatrix{
\overline{\ccM} (X,\beta)\ar@{^{(}->}[r]\ar@{^{(}->}[d] & \bbR\overline{\ccM} (X,\beta)\ar@{^{(}->}[d]\\
 \Sec_{\ffM}(\ffC\times X/\ffC) \ar@{^{(}->}[r]& \bbR\Sec_{\ffM}(\ffC\times X/\ffC).
}
\]
\end{example}
\begin{remark}
 The stack $\bbR\Sec_{\ffM}(\ffZ/\ffC)$ is in general a derived stack even if $(\ffM,\ffC,\ffZ)$ is a triple of classical stacks. 
\end{remark}
 
\subsection{The linear case: $\ffZ$ is a vector bundle }\label{ss:cone}

We will now consider the important special case where $\ffZ$ is a linear stack. As we will see, this case covers many disparate constructions: moduli of stable maps to projective spaces (see \cref{stable_maps_as_sections}), and more generally to varieties which are GIT quotients by linear groups, as well as moduli spaces of quasi-maps (see \cref{ss:qm}) and moduli of stable maps with fields (\cref{ex:derived_fields}). 
In this case, the derived moduli of sections is an affine stack over its base.

We start with a review of the classical (non derived) construction. Let $\ffZ=\bbV(\ccE):=\Spec_\ffC \Sym(\ccE^\vee)$ for $\ccE$ a locally-free sheaf over $\ffC$. As proved in \cite{CL12}, sections of $\bbV(\ccE)$ over $\ffM$ are an affine scheme, in fact an abelian cone:
\[
\Sec_{\ffM}(\bbV(\ccE)/\ffC)=\Spec_{\ffM}\Sym(\rder^1\pi_*\ccE^{\vee}\otimes\omega_{\pi})\text{.}
\]
 Indeed, let $f:T\to\ffM$ and $\widehat{f}:C_T\to\ffC$, by Serre's duality and flat base change we have
 \begin{align*}
 \Sec_{\ffM}(\bbV(\ccE)/\ffC)(T\to\ffM)&=\Hom_{C_T}(C_T,\widehat{f}^*\ccE)\\
 &=\Hom_{\ccO_T-\mathrm{mod}}(\rder^1{\pi_{T*}}\widehat{f}^*\ccE^{\vee}\otimes\omega_{\pi_T},\ccO_T)\\
 &= \Hom_{\ccO_T-\mathrm{mod}}(\rder^1{\pi_{T*}}\widehat{f}^*(\ccE^{\vee}\otimes\omega_{\pi}),\ccO_T)\\
 &=\Hom_{\ccO_T-\mathrm{mod}}(f^*\rder^1{\pi_*}\ccE^{\vee}\otimes\omega_{\pi},\ccO_T)\\
 &= \Spec_{\ffM}\Sym(\rder^1\pi_*\ccE^{\vee}\otimes\omega_{\pi})(T\to\ffM)\text{.}
 \end{align*}
 \begin{example}[Hodge bundle]\label{ex:Hodge}
 For $\ffM=\ffM^{\pre}_{g,n}$, the moduli of pre-stable curves, the Hodge bundle $\ffH$ is the cone of sections 
 \[\Sec_{\ffM}(\bbV(\omega_{\pi})/\ffC)=\Spec_{\ffM}\Sym(\rder^1\pi_*\ccO_{\ffC})\text{.}\]
 This is a vector bundle of rank $g$, since $\rder^0\pi_*\ccO_{\ffC}\cong\ccO_{\ffM}$.
 \end{example}
\begin{example}[Stable maps with fields]\label{ex:fields}
Let $\ffX=\overline{\ccM}_{g,n} (X,\beta)$ with its universal family  
\[
(\pi_{\ffX},\ev_{\ffX}):\ffC_{\ffX}\to \ffX\times X.
\]
Let $\ccE$ be a locally-free sheaf over $X$. The moduli space of stable maps with fields in $E=\bbV(\ccE)\to X$ (see \cite{CL12, QJW21, picciotto2021moduli}) denoted $\ffX^E$ can be seen as 
\[
\ffX^E=\Sec_{\ffX}(\bbV(\ev_{\ffX}^*\ccE^{\vee}\otimes\omega_{\ffC_{\ffX}/\ffX})/\ffC_{\ffX})=\Spec_{\ffX}\Sym (\rder^1\pi_{\ffX *}\ev_{\ffX}^*\ccE)\text{.}
\]
\end{example}

We will now cover the general case where $\ffZ$ is a derived vector bundle, that is 
\[
\ffZ=\bbV(\ccE)\coloneqq \bbR\Spec_{\ffC}\left(\Sym (\ccE^{\vee})\right)
\]
for $\ccE\in \Perf^{\geq 0}(\ccO_{\ffC})$.
\begin{proposition}\label{prop:moduli_section_vector_bundle}
Let $\ffZ=\bbV(\ccE)$ for $\ccE$ as above. Then
\[
\bbR\Sec_{\ffM}(\bbV(\ccE)/\ffC)=\bbR\Spec_{\ffM}\Sym((\rder\pi_{\ffX *}\ccE)^{\vee})\text{.}
\]
\end{proposition}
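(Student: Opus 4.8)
The plan is to establish the identity by checking that both sides corepresent the same functor on test derived $\ffM$-schemes, and then to identify that functor with the claimed relative derived spectrum. First I would fix a test $\ffM$-scheme $T\xrightarrow{f}\ffM$ and write $C_T\coloneqq T\times_{\ffM}^h\ffC$ with structure map $\pi_T\colon C_T\to T$ and canonical morphism $\widehat{f}\colon C_T\to\ffC$. By \cref{lemma:fctor-pts-sec}, $\bbR\Sec_{\ffM}(\bbV(\ccE)/\ffC)(T)\simeq\bbR\Hom_{C_T}(C_T,\bbV(\widehat{f}^*\ccE))$, i.e. the space of sections of the pulled-back derived vector bundle. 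Since forming $\bbV(-)$ is compatible with base change, $\widehat{f}^*\bbV(\ccE)=\bbV(\widehat{f}^*\ccE)=\bbR\Spec_{C_T}\Sym(\widehat{f}^*\ccE^{\vee})$, so a section over $C_T$ is by the adjunction defining $\bbR\Spec$ the same datum as a map of $\ccO_{C_T}$-algebras $\Sym(\widehat{f}^*\ccE^{\vee})\to\ccO_{C_T}$, equivalently a point of the mapping space $\bbR\Hom_{\ccO_{C_T}\text{-}dgm}(\widehat{f}^*\ccE^{\vee},\ccO_{C_T})\simeq\bbR\Hom_{\ccO_{C_T}\text{-}dgm}(\ccO_{C_T},\widehat{f}^*\ccE)$, using that $\ccE$ is perfect so $\ccE^{\vee\vee}\simeq\ccE$. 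Thus the $T$-points form the space $\bbR\Gamma(C_T,\widehat{f}^*\ccE)$, viewed as a connective object via Dold--Kan.

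Next I would push this computation down to $T$ using the adjunction $(\pi_T^*,\rder\pi_{T*})$: we have
\begin{equation*}
\bbR\Gamma(C_T,\widehat{f}^*\ccE)\simeq\bbR\Hom_{\ccO_{C_T}\text{-}dgm}(\ccO_{C_T},\widehat{f}^*\ccE)\simeq\bbR\Hom_{\ccO_T\text{-}dgm}(\ccO_T,\rder\pi_{T*}\widehat{f}^*\ccE)\text{.}
\end{equation*}
Flat base change along the cartesian square (using that $\pi$ is flat, hence of Tor-amplitude $0$, so $\rder\pi_{T*}\widehat{f}^*\ccE\simeq f^*\rder\pi_*\ccE$) then gives that this space is $\bbR\Hom_{\ccO_T\text{-}dgm}(\ccO_T,f^*\rder\pi_*\ccE)\simeq\bbR\Hom_{\ccO_T\text{-}dgm}((f^*\rder\pi_*\ccE)^{\vee},\ccO_T)$; here one must check $\rder\pi_*\ccE$ is perfect, which follows from $\ccE\in\Perf^{\geq 0}(\ccO_{\ffC})$ and $\pi$ proper and flat of relative dimension $1$ (so $\rder\pi_*$ preserves perfect complexes). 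Finally, unwinding the definition of $\bbR\Spec_{\ffM}$ of a symmetric algebra, these are exactly the $T$-points of $\bbR\Spec_{\ffM}\Sym((\rder\pi_*\ccE)^{\vee})$, and the comparison is functorial in $T$ by naturality of all the adjunctions and base-change equivalences used. (Note the statement's $\rder\pi_{\ffX*}$ should read $\rder\pi_*$ with $\pi\colon\ffC\to\ffM$.)

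I expect the main obstacle to be bookkeeping the base-change and duality steps at the level of $\infty$-categories rather than any one being genuinely hard: specifically, verifying that $\rder\pi_*$ commutes with the non-flat base change $f$ when $\ccE$ is only perfect and not locally free — which is where flatness of $\pi$ is essential — and verifying that $\rder\pi_*\ccE$ remains perfect so that the dualization $(-)^{\vee}$ and the identification of $\bbR\Spec\Sym$ with the mapping space into $\ccO$ are legitimate. A secondary subtlety is the compatibility of $\bbV(-)$ and $\bbR\Spec_{\ffC}\Sym$ with pullback along $\widehat{f}$, and ensuring the adjunction chain $\bbR\Hom_{C_T}(C_T,-)\dashv$ pullback is correctly threaded through the slice categories, but this is exactly the manipulation already carried out in the proof of \cref{lemma:fctor-pts-sec} and can be cited rather than repeated.
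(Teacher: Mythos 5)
Your proof is correct, and it takes a genuinely different route from the paper's. Both approaches begin identically: you each reduce the $T$-points of $\bbR\Sec_{\ffM}(\bbV(\ccE)/\ffC)$ to $\rder\Hom_{\ccO_{C_T}\text{-}\mathrm{dgm}}(\ccO_{C_T},\lder\widehat{f}^*\ccE)$ and the $T$-points of $\bbR\Spec_{\ffM}\Sym((\rder\pi_*\ccE)^{\vee})$ to $\rder\Hom_{\ccO_T\text{-}\mathrm{dgm}}((\lder f^*\rder\pi_*\ccE)^\vee,\ccO_T)$. The divergence is in how the two are glued together. The paper connects them by a two-step application of Grothendieck duality to $\pi_T$: first the sheafified version $(\rder\pi_{T*}\lder\widehat{f}^*\ccE)^\vee\simeq\rder\pi_{T*}(\lder\widehat{f}^*\ccE^\vee\otimes\omega_{\pi_T})$, then global duality to move back across $\pi_{T*}$ using $\pi_T^!\ccO_T\simeq\omega_{\pi_T}$. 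You instead bypass the dualizing complex entirely by using the ordinary $(\pi_T^*,\rder\pi_{T*})$ adjunction applied to $\ccO_{C_T}=\pi_T^*\ccO_T$, and then dualizing \emph{over $T$}, which is legitimate because $\pi$ is proper and flat so $\rder\pi_{T*}$ sends perfect complexes to perfect complexes. Your argument is shorter and avoids the machinery of $\pi_T^!$ and $\omega_{\pi_T}$; the paper's argument mirrors the Serre-duality computation it runs earlier in \S\ref{ss:cone} for the classical (non-derived) abelian cone, which is presumably why the authors chose it, but the extra duality apparatus is not essential. One small point worth being precise about in your write-up: what you invoke under the name ``flat base change'' is derived proper base change along the homotopy-cartesian square (applied to a possibly non-flat $f$), which holds here without further hypotheses in the derived setting; the flatness of $\pi$ enters rather in the algebraicity/Tor-amplitude hypotheses and in guaranteeing that the classical and derived fiber products of $C_T$ agree. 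You also correctly flag (as the paper does only implicitly) that $\rder\pi_*\ccE$ must be perfect for the duality over $T$ and for the identification of $T$-points of the relative spectrum to go through.
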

\begin{proof}
This is formally similar to the argument in \cref{ss:cone}. Let $f:T=\bbR\Spec A_{\bullet}\to\ffM$ be an affine derived scheme over $\ffM$ with $\widehat{f}:C_T\coloneqq T\times^h_{\ffM}\ffC\to\ffC$ the induced map and $\pi_T:C_T\to T$ the induced projection. From \cref{def:rsec}, we have
\begin{align*}
 \bbR\Sec_{\ffM}(\bbV(\ccE)/\ffC)(T\to\ffM)&= \bbR\Hom_{T}(C_T,\bbV(\lder\widehat{f}^*\ccE))\times^h_{\bbR\Hom_T(C_T,C_T)}T\\
 & = \bbR\Hom_{C_T}(C_T,\bbV(\lder\widehat{f}^*\ccE))\\
 & = \rder\Hom_{\ccO_{C_T}-\mathrm{dgm}}(\ccO_{C_T},\lder\widehat{f}^*\ccE)\text{.}
\end{align*}
The second line follows by \cite[5.5.5.12]{Lurie2009}.
On the other hand,
\begin{align*}
\bbR\Spec_{\ffX}\Sym((\rder\pi_{\ffX *}\ccE)^{\vee})(T\to\ffM)&=\rder\Hom_{\ccO_{\ffM}-\mathrm{cdga}}(\Sym((\rder\pi_{\ffX *}\ccE)^{\vee},\rder f_*\ccO_T)\\
&=\rder\Hom_{\ccO_{T}-\mathrm{dgm}}((\lder f^*R\pi_{\ffX *}\ccE)^{\vee},\ccO_T)\text{.}
\end{align*}
By flat base-change, 
\[
\rder\Hom_{\ccO_{T}-\mathrm{dgm}}((\lder f^*\rder\pi_{\ffX *}\ccE)^{\vee},\ccO_T) = \rder\Hom_{\ccO_{T}-\mathrm{dgm}}((\rder\pi_{T *}\lder\widehat{f}^*\ccE)^{\vee},\ccO_T)\text{.}
\]
By the sheafified Grothendieck duality statement of \cite[Corollary 4.4.2]{MR2681711},
\begin{align*}
 \rder\Hom_{\ccO_{T}-\mathrm{dgm}}((\rder\pi_{T *}\lder\widehat{f}^*\ccE)^{\vee},\ccO_T) &= \rder\Hom_{\ccO_{T}-\mathrm{dgm}}(\rder\pi_{T *}\rder\ccH\mathit{om}_{C_T}(\lder\widehat{f}^*\ccE,\omega_{\pi_T}),\ccO_T)\\
 &= \rder\Hom_{\ccO_{T}-\mathrm{dgm}}(\rder\pi_{T *}(\lder\widehat{f}^*\ccE^{\vee}\otimes\omega_{\pi_T}),\ccO_T)\text{.}
\end{align*}
By the global duality statement of \cite[Theorem 4.1.1]{MR2681711},
\[
 \rder\Hom_{\ccO_{T}-\mathrm{dgm}}(\rder\pi_{T *}(\lder\widehat{f}^*\ccE^{\vee}\otimes\omega_{\pi_T}),\ccO_T)=  \rder\Hom_{\ccO_{C_T}-\mathrm{dgm}}(\lder\widehat{f}^*\ccE^{\vee}\otimes\omega_{\pi_T},\pi_{T}^{!}\ccO_T)\text{.}
 \]
 So finally,
 \begin{align*}
 \rder\Hom_{\ccO_{C_T}-\mathrm{dgm}}(\lder\widehat{f}^*\ccE^{\vee}\otimes\omega_{\pi_T},\pi_{T}^{!}\ccO_T) & = \rder\Hom_{\ccO_{C_T}-\mathrm{dgm}}(\lder\widehat{f}^*\ccE^{\vee}\otimes\omega_{\pi_T},\omega_{\pi_T})\\
&= \rder\Hom_{\ccO_{C_T}-\mathrm{dgm}}(\ccO_{C_T},\lder\widehat{f}^*\ccE)\text{.}
 \end{align*}

\end{proof}
\begin{example}[Derived Hodge bundle]\label{ex:RH}
The derived version of the Hodge bundle of Example \ref{ex:Hodge} is 
\[
\bbR\ffH=\bbR \Sec_{\ffM}(\bbV(\omega_{\pi})/\ffC)=\ffH\times_{\ffM}\underline{\bbA^1}_{\ffM}[-1].
\]
This consists of the usual Hodge bundle in degree 0 and a trivial line bundle in degree 1. 
\end{example}

\begin{example}[Derived stable maps with fields]\label{ex:derived_fields}
Keeping the notation from \cref{ex:fields}, we define the derived version of the moduli space of stable maps with fields. We have from \cref{ex:stablemaps} a derived enhancement of the moduli of stable maps to $X$, $\bbR\ffX\coloneqq\bbR\overline{\ccM}_{g,n}$ with a universal family $\pi_{\bbR\ffX}, \ev_{\bbR\ffX}:\ffC_{\bbR\ffX }\to\bbR\ffX\times X$. The derived enhancement of the moduli of stable maps can be constructed as
\[\bbR\ffX^E=\bbR\Sec_{\bbR\ffX}(\bbV(ev^*_{\bbR\ffX}\ccE^{\vee}\otimes\omega_{\ffC_{\bbR\ffX}/\bbR\ffX})/\ffC_{\bbR\ffX})=\bbR\Spec_{\bbR\ffX}\Sym(\rder\pi_{\bbR\ffX_*}ev_{\bbR\ffX}^*\ccE[1])\text{,}
\]
the second equality coming from \cref{prop:moduli_section_vector_bundle} and Grothendieck duality.
\end{example}

The derived definition of the moduli of sections restricts well, in the sense of the following proposition.
\begin{proposition}\label{prop:restriction of sec}
Let $\ccE$ be a derived vector bundle over $\ffC$. Let $\ffU\subset\ffM$ be an open substack. Denote $\ffC_{\ffU}=\ffC\times_{\ffM}\ffU$ and $\ccE_{\ffU}$ the restriction of $\ccE$. Then
\[
\bbR\Sec_{\ffU}(\bbV(\ccE_{\ffU})/\ffC_{\ffU})\simeq \ffU\times_{\ffM}\bbR\Sec_{\ffM}(\bbV(\ccE)/\ffC)\text{.}
\]
In particular $\bbR\Sec_{\ffU}(\bbV(\ccE_{\ffU})/\ffC_{\ffU})$ is an open derived substack of $\bbR\Sec_{\ffM}(\bbV(\ccE)/\ffC)$.
\end{proposition}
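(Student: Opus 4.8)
The plan is to prove this by checking it on test points, reducing to the already-established formula of Proposition \ref{prop:moduli_section_vector_bundle} together with the behaviour of $\bbR\Spec$ and $\rder\pi_*$ under open restriction. First I would invoke Proposition \ref{prop:moduli_section_vector_bundle} to rewrite both sides as relative derived spectra: on one hand
\[
\bbR\Sec_{\ffM}(\bbV(\ccE)/\ffC)=\bbR\Spec_{\ffM}\Sym\left((\rder\pi_*\ccE)^{\vee}\right)\text{,}
\]
and on the other
\[
\bbR\Sec_{\ffU}(\bbV(\ccE_{\ffU})/\ffC_{\ffU})=\bbR\Spec_{\ffU}\Sym\left((\rder\pi_{\ffU *}\ccE_{\ffU})^{\vee}\right)\text{,}
\]
where $\pi_{\ffU}\colon\ffC_{\ffU}\to\ffU$ is the base-change of $\pi$. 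Since $\pi$ is flat and proper, flat base-change along the open immersion $j\colon\ffU\hookrightarrow\ffM$ gives $\lder j^*\rder\pi_*\ccE\simeq\rder\pi_{\ffU *}\ccE_{\ffU}$ (this is the same flat base-change already used in the proof of Proposition \ref{prop:moduli_section_vector_bundle}), so, taking duals, $\lder j^*(\rder\pi_*\ccE)^{\vee}\simeq(\rder\pi_{\ffU *}\ccE_{\ffU})^{\vee}$ in $\Perf(\ccO_{\ffU})$.

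Next I would use that $\Sym$ commutes with pullback and that relative $\bbR\Spec$ is compatible with base-change: for a perfect complex $\ccF$ on $\ffM$,
\[
\bbR\Spec_{\ffU}\Sym(\lder j^*\ccF)\simeq \ffU\times_{\ffM}^{h}\bbR\Spec_{\ffM}\Sym(\ccF)\text{,}
\]
since relative $\bbR\Spec$ of a quasi-coherent sheaf of cdgas is defined by an adjunction that is stable under base-change. Applying this with $\ccF=(\rder\pi_*\ccE)^{\vee}$ and combining with the previous paragraph yields
\[
\bbR\Sec_{\ffU}(\bbV(\ccE_{\ffU})/\ffC_{\ffU})\simeq\ffU\times_{\ffM}^{h}\bbR\Sec_{\ffM}(\bbV(\ccE)/\ffC)\text{.}
\]
Since $j$ is an open immersion, the fibre product $\ffU\times_{\ffM}\bbR\Sec_{\ffM}(\bbV(\ccE)/\ffC)$ is just the preimage of $\ffU$ under the structure morphism $\bbR\Sec_{\ffM}(\bbV(\ccE)/\ffC)\to\ffM$, hence an open derived substack; this gives the ``in particular'' clause. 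Alternatively, and perhaps more cleanly, one can bypass the linear hypothesis entirely and note that this is a special case of Proposition \ref{prop:compatibility} (or a direct adjunction argument): $\bbR\Sec$ is defined via the Weil restriction $\pi_*$, and $\pi_*$ commutes with base-change along $\ffU\to\ffM$ by the Beck--Chevalley argument already spelled out there, so $j^*\pi_*\ffZ\simeq(\pi_{\ffU})_*(\ffZ\times_{\ffM}\ffU)=(\pi_{\ffU})_*\bbV(\ccE_{\ffU})$.

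The only real subtlety is making sure the flat base-change isomorphism $\lder j^*\rder\pi_*\ccE\simeq\rder\pi_{\ffU *}\ccE_{\ffU}$ is invoked in the correct derived generality — but since $\pi$ is flat this is standard, and it is exactly the instance already used in Proposition \ref{prop:moduli_section_vector_bundle}, so no new input is needed. I expect the proof to be short; the main thing to be careful about is citing the base-change compatibility of relative $\bbR\Spec$ (or of $\pi_*$) precisely rather than treating it as obvious.
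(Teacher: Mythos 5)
Your proof is correct and follows essentially the same route as the paper's: invoke Proposition~\ref{prop:moduli_section_vector_bundle} to write both moduli of sections as relative $\bbR\Spec$'s, apply flat base-change along the open immersion $i\colon\ffU\hookrightarrow\ffM$ to identify $\rder\pi_{\ffU*}\ccE_{\ffU}$ with $\lder i^*\rder\pi_*\ccE$, then use that $\Sym$ and $\bbR\Spec$ commute with base-change. The only (minor) differences are that you carry the duals $(-)^\vee$ explicitly, whereas the paper's displayed proof drops them by abuse of notation, and that you add a tidy alternative via the Beck--Chevalley compatibility of the Weil restriction already used in Proposition~\ref{prop:compatibility}.
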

\begin{proof}
Fix the following notation:
\[
\begin{tikzcd}
\ffC_{\ffU}\ar[r,"\overline{i}"]\ar[d,"\pi_{\ffU}" '] \ar[dr,phantom,"\ulcorner", very near start] & \ffC\ar[d,"\pi"]\\
\ffU\ar[r,"i"] & \ffM\text{.}
\end{tikzcd}
\]
Then
\begin{align*}
\bbR\Sec_{\ffU}(\bbV(\ccE_{\ffU})/\ffC_{\ffU}) & =\bbR\Spec_{\ffU}\Sym(\rder\pi_{\ffU *}\overline{i}^*\ccE)\\
& \simeq\bbR\Spec_{\ffU}\Sym(i^* \rder\pi_*\ccE)\\
& = \bbR\Spec_{\ffU}(i^*\Sym(\rder\pi_*\ccE))\\
& = \ffU\times_{\ffM}\bbR\Spec_{\ffM}\Sym(\rder\pi_*\ccE)\\
& = \ffU\times_{\ffM}\bbR\Sec_{\ffM}(\bbV(\ccE)/\ffC)\text{.}
\end{align*}
Note that from the first to the second line we have used the flatness of $i$.
\end{proof}
\begin{remark}
The above proof goes through unchanged for $\ffU\to\ffM$ a flat morphism. Then $\bbR\Sec_{\ffU}(\bbV(\ccE_{\ffU})/\ffC_{\ffU})\simeq \ffU\times_{\ffM}\bbR\Sec_{\ffM}(\bbV(\ccE)/\ffC)\to \bbR\Sec_{\ffM}(\bbV(\ccE)/\ffC)$ is a flat morphism.
\end{remark}

\subsection{Tangent complex and perfect obstruction theory}\label{subsec: tangent,POT,section}

Recall that for a derived Hom-stack $H\coloneqq\dHom_X(Y,Z)$ we have a universal family
\[
\xymatrix{
H\times^h_X Y\ar[r]^{\qquad\ev_H}\ar[d]^{\pi_H}& Z\\
H& 
}
\]
and the relative tangent complex $\bbT_{H/X}$ is given by the following simple expression (see \cite[Thm 5.4.8]{CF-Kapranov-2002-dervied-Hilbert-scheme} or \cite[p.13]{STV} or the proof of \cite[Prop.4.3.1]{MR18}):

\begin{equation}\label{tanH}
\bbT_{H/X}=\rder\pi_{H*}\lder\ev_{H}^{*}\bbT_{Z/X}\text{.}
\end{equation}

Applying this fact to the diagram in \cref{def:rsec} allows us to compute $\bbT_{\bbR\Sec_{\ffM}(\ffZ/\ffC)/\ffM}$. The cotangent complex of a Weil restriction is also computed in \cite[\S 19.1.4]{Lurie_SAG}.
\begin{theorem}\label{thm:tanS}
Let $\bbR\ffS\coloneqq \bbR\Sec_{\ffM}(\ffZ/\ffC)$, as per our convention we have $\pi_{\bbR\ffS}\colon\ffC_{\bbR\ffS}=\bbR\ffS\times^h_{\ffM}\ffC\to \bbR\ffS$ and $\ev_{\bbR\ffS}\colon\ffC_{\bbR\ffS}\to\ffZ$.
\[
\bbT_{\bbR\Sec_{\ffM}(\ffZ/\ffC)/\ffM}=\rder\pi_{\bbR\ffS *}\lder \ev_{\bbR\ffS}^*\bbT_{\ffZ/\ffC}\text{.}
\]
\end{theorem}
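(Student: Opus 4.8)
The plan is to combine the tangent-complex formula~\eqref{tanH} for derived Hom-stacks with the homotopy cartesian square~\eqref{eq:dsec}, which presents $\bbR\ffS$ as $\dHom_{\ffM}(\ffC,\ffZ)\times^{h}_{\dHom_{\ffM}(\ffC,\ffC)}\ffM$. Write $H_{1}\coloneqq\dHom_{\ffM}(\ffC,\ffZ)$ and $H_{2}\coloneqq\dHom_{\ffM}(\ffC,\ffC)$, with universal curves $\pi_{H_{j}}\colon H_{j}\times^{h}_{\ffM}\ffC\to H_{j}$ and evaluations $\ev_{1}\colon H_{1}\times^{h}_{\ffM}\ffC\to\ffZ$, $\ev_{2}\colon H_{2}\times^{h}_{\ffM}\ffC\to\ffC$; let $h\colon\bbR\ffS\to H_{1}$ and $g\colon\bbR\ffS\to\ffM$ be the two projections of~\eqref{eq:dsec}, $q\colon H_{1}\to H_{2}$ the map induced by post-composition with $p$, and $i\colon\ffM\to H_{2}$ the identity section. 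Since~\eqref{eq:dsec} is cartesian, base change of cotangent complexes gives $\bbL_{\bbR\ffS/\ffM}\simeq h^{*}\bbL_{H_{1}/H_{2}}$, i.e.\ $\bbT_{\bbR\ffS/\ffM}\simeq h^{*}\bbT_{H_{1}/H_{2}}$; and dualising the cotangent triangle of $H_{1}\xrightarrow{q}H_{2}\to\ffM$ identifies $\bbT_{H_{1}/H_{2}}$ with the fibre of $dq\colon\bbT_{H_{1}/\ffM}\to q^{*}\bbT_{H_{2}/\ffM}$. Hence $\bbT_{\bbR\ffS/\ffM}\simeq\mathrm{fib}\bigl(h^{*}\bbT_{H_{1}/\ffM}\to h^{*}q^{*}\bbT_{H_{2}/\ffM}\bigr)$.

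Next I would evaluate the two terms using~\eqref{tanH}: $\bbT_{H_{1}/\ffM}=\rder\pi_{H_{1}*}\lder\ev_{1}^{*}\bbT_{\ffZ/\ffM}$ and $\bbT_{H_{2}/\ffM}=\rder\pi_{H_{2}*}\lder\ev_{2}^{*}\bbT_{\ffC/\ffM}$. As $\pi\colon\ffC\to\ffM$ is flat, so is each $\pi_{H_{j}}$, and the square base-changing $\pi_{H_{j}}$ along $h$ is Tor-independent, so flat base change commutes $h^{*}$ past $\rder\pi_{H_{j}*}$. Combined with the compatibility of universal families along $h$ --- namely $\ev_{1}$ restricts to $\ev_{\bbR\ffS}\colon\ffC_{\bbR\ffS}\to\ffZ$ under $\ffC_{\bbR\ffS}=\bbR\ffS\times^{h}_{\ffM}\ffC\to H_{1}\times^{h}_{\ffM}\ffC$, and, by definition of $q$, $\ev_{2}\circ(q\times\id_{\ffC})$ restricts to $p\circ\ev_{\bbR\ffS}$, which is the canonical map $\ffC_{\bbR\ffS}\to\ffC$ --- this yields
\[
h^{*}\bbT_{H_{1}/\ffM}\simeq\rder\pi_{\bbR\ffS*}\lder\ev_{\bbR\ffS}^{*}\bbT_{\ffZ/\ffM},\qquad h^{*}q^{*}\bbT_{H_{2}/\ffM}\simeq\rder\pi_{\bbR\ffS*}\lder\ev_{\bbR\ffS}^{*}\lder p^{*}\bbT_{\ffC/\ffM}\text{,}
\]
and, by functoriality of~\eqref{tanH} in the target, the connecting map between them is $\rder\pi_{\bbR\ffS*}\lder\ev_{\bbR\ffS}^{*}$ applied to the differential $dp\colon\bbT_{\ffZ/\ffM}\to\lder p^{*}\bbT_{\ffC/\ffM}$.

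Finally, dualising the cotangent triangle of $\ffZ\xrightarrow{p}\ffC\to\ffM$ identifies $\bbT_{\ffZ/\ffC}$ with $\mathrm{fib}(dp)$. Since $\lder\ev_{\bbR\ffS}^{*}$ and $\rder\pi_{\bbR\ffS*}$ are exact functors of stable $\infty$-categories they preserve fibres, so
\[
\bbT_{\bbR\ffS/\ffM}\simeq\mathrm{fib}\bigl(\rder\pi_{\bbR\ffS*}\lder\ev_{\bbR\ffS}^{*}\,dp\bigr)\simeq\rder\pi_{\bbR\ffS*}\lder\ev_{\bbR\ffS}^{*}\,\mathrm{fib}(dp)\simeq\rder\pi_{\bbR\ffS*}\lder\ev_{\bbR\ffS}^{*}\bbT_{\ffZ/\ffC}\text{,}
\]
which is the assertion.

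The substance here is bookkeeping rather than new ideas: one must check carefully that $q$ induces exactly $\rder\pi_{*}\lder\ev^{*}(dp)$ on the expressions of~\eqref{tanH}, that the universal evaluation maps restrict as claimed, and use flatness of $\pi$ to base-change $\rder\pi_{*}$; the dualisations of the cotangent triangles are harmless because the relevant cotangent complexes are perfect (finite Tor-amplitude proper pushforwards of perfect complexes), so that $(-)^{\vee}$ commutes with the pullbacks. As an alternative, one may instead invoke directly the computation of the cotangent complex of a Weil restriction in~\cite[\S 19.1.4]{Lurie_SAG}, of which $\bbR\ffS=\pi_{*}\ffZ$ is a special case.
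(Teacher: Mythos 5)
Your proof is correct and follows essentially the same route as the paper's: both express $\bbR\ffS$ as the homotopy fiber product $\dHom_{\ffM}(\ffC,\ffZ)\times^{h}_{\dHom_{\ffM}(\ffC,\ffC)}\ffM$, plug the tangent formula for derived Hom-stacks into a fiber sequence for the composite $\ffZ\to\ffC\to\ffM$, and then commute pullback past $\rder\pi_{*}$ by flat base change. The only cosmetic difference is that you read the fiber sequence off $h^{*}\bbT_{H_{1}/H_{2}}$ directly, whereas the paper uses the rotated triangle built from $\bbT_{\bbR\ffS/H}[1]=k^{*}\bbT_{\ffM/K}[1]=k^{*}i^{*}\bbT_{K/\ffM}$; these are the same two-term fiber sequence after unwinding the cartesian square.
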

\begin{proof}
For simplicity, we adopt the following notation
\begin{equation}\label{square1}
 \xymatrix{
\bbR\ffS= \bbR\Sec_{\ffM}(\ffZ/\ffC)\ar[r]^k\ar[d]^l & \ffM\ar[d]^{i}\\
 H\coloneqq\dHom_{\ffM}(\ffC,\ffZ)\ar[r]^q \ar[dr]& K\coloneqq \dHom_{\ffM}(\ffC,\ffC)\ar[d]\\
 & \ffM
 }
\end{equation}
and all push-forwards and pull-backs are understood as derived in this proof. We also have the following commutative diagram of evaluation maps and projections
\begin{equation}\label{square2}
\xymatrix{
 & & & \ffZ\ar[d]^p\\
 & & & \ffC\\
 \ffC_{\bbR{\ffS}}\ar[r]^{\widehat{l}}\ar[uurrr]^{\ev_{\bbR\ffS}}\ar[d]^{\pi_{\bbR\ffS}}& \ffC_{H} \ar[r]^{\widehat{q}}\ar[uurr]_{\ev_H}\ar[d]^{\pi_H}& \ffC_{K} \ar[ur]_{\ev_K}\ar[r]\ar[d]^{\pi_K} & \ffC\ar[d]\\
\bbR\ffS\ar[r]^{l}& H\ar[r]^q& K\ar[r] & \ffM\text{.}
}
\end{equation}
The bottom squares are all fibered.
The rightmost vertical maps of \cref{square1} give
\[
\bbT_{\ffM/K}=i^*\bbT_{K/\ffM}[-1]=i^*(\pi_{K*}\ev_{K}^*\bbT_{\ffC/\ffM})[-1]
\]
because the composite vertical map is the identity and by \cref{tanH}. Then we have 
\[
\bbT_{\bbR\ffS/H}=k^*\bbT_{\ffM/K}=k^*i^*(\pi_{K*}\ev_{K}^*\bbT_{\ffC/\ffM})[-1]\text{.}
\]
We can then retrieve $\bbT_{\bbR\ffS/\ffM}$ from the following rotation of the distinguished triangle of $\bbR\ffS\to H\to\ffM$:
\begin{equation}\label{triangle1}
\bbT_{\bbR\ffS/\ffM}\to l^*\bbT_{H/\ffM}\to \bbT_{\bbR\ffS/H}[1]
\end{equation}
by knowing that $l^*\bbT_{H/\ffM}=l^*\pi_{H *}\ev_{H}^*\bbT_{\ffZ/\ffM}$. To do this we first check that by diagram \ref{square2} we have the following natural transformations
\begin{align*}
&k^*i^*\pi_{K*}\ev^*_K=l^*q^*\pi_{K*}\ev^*_K=l^*\pi_{H*}\widehat{q}^*ev_K^*=\pi_{\bbR\ffS *}\widehat{l}^*\widehat{q}^*ev_K^*=\pi_{\bbR\ffS *}\ev_{\bbR\ffS }^*p^*\\
&l^*\pi_{H *}\ev_{H}^*=\pi_{\bbR\ffS *}\widehat{l}^*\ev_H^*=\pi_{\bbR\ffS *}\ev_{\bbR\ffS}^*\text{.}
\end{align*}
\[
\bbT_{\bbR\ffS/\ffM}\to\pi_{\bbR\ffS *}\ev_{\bbR\ffS }^*\bbT_{\ffZ/\ffM}\to \pi_{\bbR\ffS *}\ev_{\bbR\ffS }^*p^*\bbT_{\ffC/\ffM}\text{.}
\]
This triangle is obtained simply by applying $\pi_{\bbR\ffS *}\ev_{\bbR\ffS }^*$ to $\bbT_{\ffZ/\ffC}\to\bbT_{\ffZ/\ffM}\to T_{\ffC/\ffM}=\bbT_{\ffC/\ffM}$, so
\[
\bbT_{\bbR\ffS/\ffM}=\pi_{\bbR\ffS *}\ev_{\bbR\ffS }^*\bbT_{\ffZ/\ffC}\text{.}
\]
\end{proof}
\begin{corollary}\label{cor:pot}
If $\ffZ\to\ffC$ is a smooth Deligne--Mumford stack, 
\[\ffS\coloneqq\Sec_{\ffM}(\ffZ/\ffC)=t_0(\bbR\Sec_{\ffM}(\ffZ/\ffC))
\]
has a relative perfect obstruction theory in the sense of \cite{BF} given by
\[
\bbT_{\ffS/\ffM}\to \bbE_{\ffS/\ffM}\coloneqq \rder\pi_{\ffS *} \ev_{\ffS}^*T_{\ffZ/\ffC}\text{.}
\]
\end{corollary}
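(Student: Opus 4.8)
The plan is to deduce the corollary directly from Theorem \ref{thm:tanS} by applying the standard dictionary between quasi-smooth derived enhancements and perfect obstruction theories, as recalled in the introduction (following \cite[Proposition 1.2]{STV}). First I would recall that, by Corollary \ref{coro:derived,Rsec}, $\bbR\ffS\coloneqq\bbR\Sec_{\ffM}(\ffZ/\ffC)$ is a derived enhancement of the classical stack $\ffS=\Sec_{\ffM}(\ffZ/\ffC)$, with closed immersion $j\colon\ffS\hookrightarrow\bbR\ffS$ inducing $dj\colon j^*\bbL_{\bbR\ffS/\ffM}\to\bbL_{\ffS/\ffM}$. Dualising Theorem \ref{thm:tanS}, the relative cotangent complex of $\bbR\ffS$ over $\ffM$ is $\bbL_{\bbR\ffS/\ffM}=(\rder\pi_{\bbR\ffS *}\lder\ev_{\bbR\ffS}^*\bbT_{\ffZ/\ffC})^\vee$, and restricting along $j$ and using flat base-change for the cartesian square defining $\ffC_{\ffS}$ one identifies $j^*\bbL_{\bbR\ffS/\ffM}$ with $\bbE_{\ffS/\ffM}\coloneqq\rder\pi_{\ffS *}\ev_{\ffS}^*T_{\ffZ/\ffC}$ (note that since $\ffZ\to\ffC$ is smooth, $\bbT_{\ffZ/\ffC}=T_{\ffZ/\ffC}$ is a vector bundle, so its pullback needs no derived decoration and dualising commutes with everything).

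Next I would check the two hypotheses needed for $dj$ to be a relative perfect obstruction theory in the sense of \cite{BF}: that $\bbE_{\ffS/\ffM}$ is perfect of amplitude in $[-1,0]$, and that $\bbR\ffS$ is quasi-smooth (equivalently, that its relative cotangent complex sits in cohomological degrees $[-1,0]$). Since $\pi\colon\ffC\to\ffM$ is flat, proper of relative dimension $1$, and $T_{\ffZ/\ffC}$ is locally free (of finite rank, as $\ffZ\to\ffC$ is a smooth DM stack hence locally of finite presentation), the complex $\rder\pi_{\ffS *}\ev_{\ffS}^*T_{\ffZ/\ffC}$ is perfect with cohomology concentrated in degrees $0$ and $1$; dually $\bbE_{\ffS/\ffM}$ has amplitude in $[-1,0]$. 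This is exactly the amplitude estimate that makes $\bbR\ffS\to\ffM$ quasi-smooth, so the general result of \cite[Proposition 1.2]{STV} (in its relative form) applies and yields that $dj\colon j^*\bbL_{\bbR\ffS/\ffM}\to\bbL_{\ffS/\ffM}$ is a relative perfect obstruction theory; dualising gives the asserted arrow $\bbT_{\ffS/\ffM}\to\bbE_{\ffS/\ffM}$.

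I do not expect a serious obstacle here: the corollary is essentially a formal consequence of Theorem \ref{thm:tanS} together with the established derived-to-POT correspondence. The one point requiring a little care is the identification $j^*\bbL_{\bbR\ffS/\ffM}\simeq\bbE_{\ffS/\ffM}$, which uses that the universal curve and evaluation map of the \emph{derived} moduli of sections restrict, along $j$, to those of the classical one — this follows from the fact that $\ffC_{\bbR\ffS}=\bbR\ffS\times^h_{\ffM}\ffC$ is flat over $\bbR\ffS$ (as $\pi$ is flat), so its truncation is $\ffC_\ffS=\ffS\times_\ffM\ffC$ and $\ev_\ffS=\ev_{\bbR\ffS}\circ j'$ for the induced closed immersion $j'\colon\ffC_\ffS\hookrightarrow\ffC_{\bbR\ffS}$, combined with flat base-change to commute $j^*$ past $\rder\pi_{\bbR\ffS *}$. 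Once this compatibility is in hand, the amplitude bound and the citation to \cite{STV} finish the argument.
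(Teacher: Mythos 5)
Your proof is correct and follows the same route as the paper: dualise Theorem \ref{thm:tanS}, invoke the STV derived-to-POT correspondence, and conclude via a base-change identification of $j^*\bbL_{\bbR\ffS/\ffM}$ with the pullback along the truncation of the universal curve. The only blemish is a notational slip in your first paragraph, where you write that $j^*\bbL_{\bbR\ffS/\ffM}$ is identified with $\bbE_{\ffS/\ffM}$ rather than with its dual $\bbE_{\ffS/\ffM}^\vee$; the dualisation you perform at the end of the second paragraph shows you have the correct picture, so this is cosmetic.
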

\begin{proof} In this case the derived stack $\bbR\ffS$ is quasi-smooth and $\ffS$ is its truncation. Let  $\ffS\xhookrightarrow{j}\bbR\ffS$ be  the closed immersion of the non-derived moduli of sections given by truncation. Then $\bbT_{\ffS/\ffM}\to j^*\rder\pi_{\bbR\ffS *} \ev_{\bbR\ffS}^*T_{\ffZ/\ffX}$ is a perfect obstruction theory by \cite[Corollary 1.3]{STV}.  There is a quasi-isomorphism 
\[ j^*\rder\pi_{\bbR\ffS *} \ev_{\bbR\ffS}^*T_{\ffZ/\ffX}= \rder\pi_{\ffS *} \ev_{\ffS}^*T_{\ffZ/\ffC}
\]
 given by base-change of the projection and evaluation maps, hence the result.
\end{proof}

\begin{example}[Pic parametrizing prestable curves with a line bundle]\label{ex:pic}
Let $\ffM=\ffM_{g,n}^{\pre}$ be the moduli space of pre-stable genus $g$, $n$-pointed curves with universal curve $\ffC$. Consider the moduli space $\Pic_d:=\Pic_{g,n,d}$ parametrizing pairs $(C,L)$ of a pre-stable curve and a line bundle of degree $d$ with the additional ``stability'' conditions
\begin{enumerate}
\item \[\omega_{C}^{\log}\otimes L^{\otimes 3}>0\] where $\omega_C^{\log}$ is the canonical bundle of the curve twisted by the sum of the $n$ marked points.
\item \[ \deg(L)|_{C_i}\geq 0\] on all components $C_i$ of $C$.
\end{enumerate}
This is an open substack of the usual stack of curves with a degree $d$ line bundle, denoted by $\Pic_d$. It turns out that $\Pic_d$ is an open and closed substack of the moduli of sections of $\ffC\times B\bbC^*$ that is
\[
\Pic_d\subset \bbR\Sec_{\ffM}(\ffC\times B\bbC^*/\ffC)\text{.}
\]
The pullback of the universal curve over $\Pic_d$ is denoted as usual by $\pi_{\Pic_d}:\ffC_{\Pic_d}\to\Pic_d$. The universal section induces an evaluation $\ell_d:\ffC_{\Pic_d}\to B\bbC^*$.
By Theorem \ref{thm:tanS}, the relative tangent of the morphism $\Pic_d\to\ffM$ is
\begin{align*}
\bbT_{\Pic_d/\ffM} &= \rder\pi_{\Pic_d *}\lder\ell_d^*\bbT_{B\bbC^*}\\
& = \rder\pi_{\Pic_d *}\ccO_{\ffC_{\Pic_d}}[1]\text{.}
\end{align*}
$\Pic_d\to\ffM$ is a smooth Artin stack of relative dimension $g-1$.
\end{example}

\section{Derived structure on stable maps and quasi-maps}\label{sec:compare_derived_stable_maps}
There are several ways of constructing derived moduli spaces of maps to a quotient.  The rest of the paper is concerned with maps to projective space $\bbP^r$. Below we describe the construction of stable maps and quasi-maps to projective spaces as particular cases of the moduli space of sections.
\subsection{Stable maps to $\bbP^r$ as sections} \label{stable_maps_as_sections}
From example \ref{ex:stablemaps}, we can construct $\RGw$ as an open substack of $\bbR\Sec_{\bbM}(\ffC\times\bbP^r/\ffC)$. Then theorem \ref{thm:tanS} recovers the usual formula
\[
\bbT_{\RGw/\ffM}=\rder\pi_{\RGw *}f^*T_{\bbP^r}
\]
where $f:\ffC_{\RGw}\to\bbP^r$ is the second component of the universal evaluation $\ev_{\RGw}$.

On the other hand, we may view degree $d$ maps into $\bbP^r$ as (an open substack of) sections of $r+1$ degree $d$ line bundles over a curve. With notation from example \ref{ex:pic} we can define the universal bundle of  $\Pic_d$ as the pullback of the universal bundle $[\bbC/\bbC^*]$ over the classifying space $B\bbC^*=[\bullet/\bbC^*]$:
\[
\xymatrix{
\ffL_d\ar[r]\ar[d]& [\bbC/\bbC^*]\ar[d]\\
\ffC_{\Pic_d}\ar[r]^{\ell_d} & [\bullet/\bbC^*]\text{.}
}
\]
The corresponding locally-free sheaf is denoted by $\ccL_d$.
 In the non-derived setting, this is indeed well-known that we can think of stable maps to projective space as an open substack of the moduli of sections of line bundles:
\[
\Gw\subset\Sec_{\Pic_d}(\ffL^{\oplus r+1}/\ffC_{\Pic_d})\text{.}
\]
This description gives rise to a perfect obstruction theory relative to $\Pic_d$ which has been proved to be compatible with the usual one (see for example \cite{CL12,Fontanine-QMap2010}).
In the discussion below, we strengthen previous results by proving a derived statement (our \cref{thm:der_structures_on_Mgn}) which easily implies the classical one (\cref{thm:potMgn}).

Observe that $\ffL_d^{\oplus r+1}=\ffC_{\Pic_d}\times_{[\bullet/\bbC^*]}[\bbC^{r+1}/\bbC^*]$. The projective space $\bbP^r$  is open in the global quotient stack $[\bbC^{r+1}/\bbC^*]$, so at the level of derived moduli of sections we obtain an open immersion:
\begin{equation}\label{eq:Prquotient}
\bbR\Sec_{\Pic_d}(\ffC_{\Pic_d}\times_{[\bullet/\bbC^*]}\bbP^r/\ffC_{\Pic_d}) \subset\bbR\Sec_{\Pic_d}(\ffL_d^{\oplus r+1}/\ffC_{\Pic_d})\text{.}
\end{equation}

\begin{claim}\label{claim:secs}
We can identify the derived stacks of sections  $\bbR\Sec_{\Pic_d}(\ffC_{\Pic_d}\times_{[\bullet/\bbC^*]}\bbP^r/\ffC_{\Pic_d})$ and $\bbR\Sec_{\ffM}(\ffC\times\bbP^r/\ffC)$.
\end{claim}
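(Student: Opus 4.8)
The plan is to compare the functors of points of the two derived stacks directly; the content is that the line-bundle datum parametrised by $\Pic_d$ is encoded in, and recovered uniquely from, a morphism to $\bbP^r$. Throughout I use the presentation $\bbP^r=[(\bbC^{r+1}\setminus\{0\})/\bbC^*]$ together with its structure map $\rho\colon\bbP^r\to B\bbC^*$, normalised (compatibly with the set-up of \S\ref{stable_maps_as_sections}) so that a morphism $g\colon Y\to\bbP^r$ is the same datum as a line bundle $N=g^{*}\ccO(1)$ on $Y$, classified by $\rho\circ g$, together with $r+1$ generating sections of $N$.

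First I would unwind both sides using \cref{lemma:fctor-pts-sec}. For a test scheme $T$, a $T$-point of $\bbR\Sec_{\ffM}(\ffC\times\bbP^r/\ffC)$ consists of a prestable curve $C_T\to T$ and a morphism $g\colon C_T\to\bbP^r$. On the other side, since $\bbP^r$ is open in $[\bbC^{r+1}/\bbC^*]$ the fibre $\ffC_{\Pic_d}\times_{B\bbC^*}\bbP^r$ is the complement of the zero section in $\ffL_d^{\oplus r+1}$ (cf.\ \eqref{eq:Prquotient}); hence by its universal property a $T$-point of $\bbR\Sec_{\Pic_d}(\ffC_{\Pic_d}\times_{B\bbC^*}\bbP^r/\ffC_{\Pic_d})$ consists of a prestable curve $C_T\to T$, a degree-$d$ line bundle $L$ on $C_T$ satisfying the conditions of \cref{ex:pic}, a morphism $g\colon C_T\to\bbP^r$, and an isomorphism $\iota\colon g^{*}\ccO(1)\xrightarrow{\ \sim\ }L$ (equivalently: $C_T$, $L$, and $r+1$ sections of $L$ with empty common zero locus). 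The key observation is then that, for fixed $(C_T,g)$, the datum $(L,\iota)$ is contractible---it is an object isomorphic to $g^{*}\ccO(1)$ together with the identifying isomorphism, and any automorphism of it fixing $g$ must preserve $\iota$, hence is trivial---so that $L$ is ``absorbed'' into $g$. Running this equivalence in families yields an equivalence of $\infty$-functors, and by the Yoneda lemma an equivalence of the two derived stacks; since each side is by construction a Weil restriction (\cref{def:rsec}), the derived structures agree automatically. As a corroborating check, \cref{thm:tanS} computes the two relative tangent complexes over $\ffM$: on the $\Pic_d$-side one combines $\bbT_{(\ffC_{\Pic_d}\times_{B\bbC^*}\bbP^r)/\ffC_{\Pic_d}}\simeq(g^{*}\ccO(1))^{\oplus r+1}$ with $\bbT_{\Pic_d/\ffM}\simeq\rder\pi_{\ast}\ccO[1]$ from \cref{ex:pic}, and the resulting extension triangle is $\rder\pi_{\ast}$ applied to the Euler sequence for $g^{*}T_{\bbP^r}$, matching $\bbT_{\bbR\Sec_{\ffM}(\ffC\times\bbP^r/\ffC)/\ffM}$.

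The delicate point---and the expected main obstacle---is the bookkeeping around the bases and the relevant open substacks: one must treat the $B\bbC^*$-homotopy as honest data rather than a condition, check that ``absorbing $L$'' is an equivalence of stacks and not merely of sets of isomorphism classes, and reconcile $\Pic_d$ with $\ffM$. Here condition $(2)$ of \cref{ex:pic} is automatic, since a genuine morphism $g$ restricts on each component to a morphism and hence to a globally generated---so nef---pullback of $\ccO(1)$; condition $(1)$ is exactly the positivity cutting out the open substack over which one works on the stable-maps side, and disappears if one uses the full Picard stack in place of $\Pic_d$. A more structural alternative would be to attempt to deduce the identification from \cref{prop:compatibility}, applied with $\ffZ_1=\ffC\times B\bbC^*$, $\ffZ_2=\ffC\times\bbP^r$ and $q=\id_{\ffC}\times\rho$---noting that $\ffC_{\Pic_d}\times_{B\bbC^*}\bbP^r$ is the pullback of $\ffZ_2$ along the evaluation $\ffC_{\Pic_d}\to\ffZ_1$---together with \cref{prop:restriction of sec} to pass between $\Pic_d$, the full Picard stack and $\ffM$; but the direct functor-of-points computation is what most transparently pins down which derived stacks are being identified.
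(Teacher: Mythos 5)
Your proposal is correct, and your primary route (direct comparison of functors of points via \cref{lemma:fctor-pts-sec}) is genuinely different from the paper's proof, which is a one-line application of \cref{prop:compatibility} with $\ffZ_1=[\bullet/\bbC^*]\times\ffC$ and $\ffZ_2=\bbP^r\times\ffC$, together with the observation that $\ffC_{\Pic_d}\times_{[\bullet/\bbC^*]}\bbP^r$ is the pullback of $\ffZ_2\to\ffZ_1$ along the evaluation $\ffC_{\Pic_d}\to\ffZ_1$. Your ``structural alternative'' at the end is precisely this; you also correctly note that one should supplement it with \cref{prop:restriction of sec} to navigate between $\Pic_d$, the full Picard stack, and $\ffM$, a step the paper elides. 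The direct approach has the merit of making the content explicit --- that for fixed $(C_T,g)$ the datum $(L,\iota)$ is a contractible choice, so the line bundle is absorbed into the map $g$ --- and is arguably more robust, since it does not lean on the exact formulation of \cref{prop:compatibility}. Your remark that the identification, as literally written with $\Pic_d$ (which carries degree and positivity constraints) on one side and $\ffM$ (unconstrained) on the other, only holds over the relevant open substacks, or with $\Pic_d$ replaced by the full Picard stack, is also more careful than the paper's phrasing. One small point: the closing appeal to ``since each side is by construction a Weil restriction, the derived structures agree automatically'' is not really a separate step --- once the two $\infty$-functors of points agree, the derived stacks agree by Yoneda, with no additional argument about derived structure needed.
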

This gives us two ways of obtaining a derived enhancement of $\Gw$:
\begin{itemize}
    \item From the open immersion $\Gw\subset \Sec_{\ffM}(\ffC\times\bbP^r/\ffC)$ and the enhancement $\Sec_{\ffM}(\ffC\times\bbP^r/\ffC)\xhookrightarrow{} \bbR\Sec_{\ffM}(\ffC\times\bbP^r/\ffC)$, or
    \item From the open immersion $\Gw\subset \Sec_{\Pic_d}(\ffL_d^{\oplus r+1}/\ffC_{\Pic_d})$ and the derived enhancement $\Sec_{\Pic_d}(\ffL_d^{\oplus r+1}/\ffC_{\Pic_d})\xhookrightarrow{} \bbR\Sec_{\Pic_d}(\ffL^{\oplus r+1}/\ffC_{\Pic_d})$.
\end{itemize}
We will see below that these two enhancements are equivalent, thus we use the notation $\RGw$ freely for either of them. 

Indeed, observe that the open immersion $\Gw\subset \Sec_{\Pic_d}(\ffL_d^{\oplus r+1}/\ffC_{\Pic_d})$ factors as
\[
 \Gw \subset \Sec_{\Pic_d}(\ffC_{\Pic_d}\times_{[\bullet/\bbC^*]}\bbP^r/\ffC_{\Pic_d})\subset\Sec_{\Pic_d}(\ffL^{\oplus r+1}/\ffC_{\Pic}) 
 \]
and all the derived enhancements of the middle space coincide. The enhancement $\Sec_{\Pic_d}(\ffC_{\Pic_d}\times_{[\bullet/\bbC^*]}\bbP^r/\ffC_{\Pic_d})\xhookrightarrow{} \bbR\Sec_{\Pic_d}(\ffC_{\Pic_d}\times_{[\bullet/\bbC^*]}\bbP^r/\ffC_{\Pic_d})$  coming from its own $\Sec$ description is equivalent to the one coming from the second inclusion and the enhancement $\Sec_{\Pic_d}(\ffL^{\oplus r+1}/\ffC_{\Pic}) \xhookrightarrow{} \bbR\Sec_{\Pic_d}(\ffL^{\oplus r+1}/\ffC_{\Pic}) $ by \cref{eq:Prquotient}. The enhancement coming from the identification 
\[
\Sec_{\Pic_d}(\ffC_{\Pic_d}\times_{[\bullet/\bbC^*]}\bbP^r/\ffC_{\Pic_d})\simeq \Sec_{\ffM}(\ffC\times\bbP^r/\ffC)\xhookrightarrow{} \bbR\Sec_{\ffM}(\ffC\times\bbP^r/\ffC)
\]
is equivalent to the enhancement $
\Sec_{\Pic_d}(\ffC_{\Pic_d}\times_{[\bullet/\bbC^*]}\bbP^r/\ffC_{\Pic_d})\xhookrightarrow{} \bbR\Sec_{\Pic_d}(\ffC_{\Pic_d}\times_{[\bullet/\bbC^*]}\bbP^r/\ffC_{\Pic_d})$ by Claim \ref{claim:secs}.
This shows that there is a unique derived enhancement $\RGw$ of $\Gw$ that is open in both $\bbR\Sec_{\ffM}(\ffC\times\bbP^r/\ffC)$ and $\bbR\Sec_{\Pic_d}(\ffL^{\oplus r+1}/\ffC_{\Pic})$.
\begin{proof}[Proof of Claim \ref{claim:secs}]
The claim follows from Proposition \ref{prop:compatibility} with $\ffZ_1=[\bullet/\bbC^*]\times\ffC$ and $\ffZ_2=\bbP^r\times\ffC$, since the parallelogram below is Cartesian: 
\[
\xymatrix{
& \bbP^r\times\ffC\ar[d]\\
\bbP^r\times_{[\bullet/\bbC^*]}\ffC_{\Pic}\ar[ur]\ar[d] & [\bullet/\bbC^*]\times\ffC \ar[d]\\
\ffC_{\Pic}\ar[r]\ar[ur]_{\ev_{\Pic_d}}& \ffC \text{.}
}
\]
\end{proof}

We summarize the discussion of this subsection:
\begin{theorem}\label{thm:der_structures_on_Mgn}
The derived enhancement $\RGw$ of $\Gw$ defined in \cref{ex:stablemaps} from the inclusion $\Gw\subset\Sec_{\ffM}(\ffC\times\bbP^r/\ffC)$ is a derived open substack of $\bbR\Sec_{\Pic_d}(\ffL^{\oplus r+1}_d/\ffC_{\Pic_d})$.
\end{theorem}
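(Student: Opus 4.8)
The plan is to assemble \cref{thm:der_structures_on_Mgn} out of the pieces already established in this subsection, so the ``proof'' is really a bookkeeping argument tracking several derived open immersions and one equivalence. First I would recall the two candidate derived enhancements of $\Gw$: the one of \cref{ex:stablemaps}, obtained from the open immersion $\Gw\subset\Sec_{\ffM}(\ffC\times\bbP^r/\ffC)$ together with the enhancement $\Sec_{\ffM}(\ffC\times\bbP^r/\ffC)\hookrightarrow\bbR\Sec_{\ffM}(\ffC\times\bbP^r/\ffC)$; and the one obtained from $\Gw\subset\Sec_{\Pic_d}(\ffL_d^{\oplus r+1}/\ffC_{\Pic_d})$ together with $\Sec_{\Pic_d}(\ffL_d^{\oplus r+1}/\ffC_{\Pic_d})\hookrightarrow\bbR\Sec_{\Pic_d}(\ffL_d^{\oplus r+1}/\ffC_{\Pic_d})$. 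The goal is to show these agree and that the resulting derived stack is open in $\bbR\Sec_{\Pic_d}(\ffL_d^{\oplus r+1}/\ffC_{\Pic_d})$.

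The key steps, in order: (1) Invoke \cref{claim:secs} to identify $\bbR\Sec_{\Pic_d}(\ffC_{\Pic_d}\times_{[\bullet/\bbC^*]}\bbP^r/\ffC_{\Pic_d})$ with $\bbR\Sec_{\ffM}(\ffC\times\bbP^r/\ffC)$; under this identification the open immersion $\Gw\subset\Sec_{\ffM}(\ffC\times\bbP^r/\ffC)$ is carried to $\Gw\subset\Sec_{\Pic_d}(\ffC_{\Pic_d}\times_{[\bullet/\bbC^*]}\bbP^r/\ffC_{\Pic_d})$, so the two classical open immersions, hence the two induced derived enhancements, coincide. (2) Use \cref{eq:Prquotient}, i.e. the open immersion $\bbR\Sec_{\Pic_d}(\ffC_{\Pic_d}\times_{[\bullet/\bbC^*]}\bbP^r/\ffC_{\Pic_d})\subset\bbR\Sec_{\Pic_d}(\ffL_d^{\oplus r+1}/\ffC_{\Pic_d})$, coming from the fact that $\bbP^r$ is open in $[\bbC^{r+1}/\bbC^*]$ and the functoriality of $\bbR\Sec$ in its argument (which respects open immersions, as $\bbR\Sec=\pi_*$ preserves them). (3) Observe that the open immersion $\Gw\subset\Sec_{\Pic_d}(\ffL_d^{\oplus r+1}/\ffC_{\Pic_d})$ factors through $\Sec_{\Pic_d}(\ffC_{\Pic_d}\times_{[\bullet/\bbC^*]}\bbP^r/\ffC_{\Pic_d})$; since the latter's derived enhancement from its own $\bbR\Sec$-description agrees (by \cref{eq:Prquotient}) with the one induced by restriction along the open immersion into $\bbR\Sec_{\Pic_d}(\ffL_d^{\oplus r+1}/\ffC_{\Pic_d})$, both descriptions of $\RGw$ produce the same derived open substack of $\bbR\Sec_{\Pic_d}(\ffL_d^{\oplus r+1}/\ffC_{\Pic_d})$.

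Concretely, I would phrase it as: $\RGw$ sits in the chain of derived open immersions
\[
\RGw \subset \bbR\Sec_{\Pic_d}(\ffC_{\Pic_d}\times_{[\bullet/\bbC^*]}\bbP^r/\ffC_{\Pic_d}) \subset \bbR\Sec_{\Pic_d}(\ffL_d^{\oplus r+1}/\ffC_{\Pic_d}),
\]
where the first is the derived enhancement from \cref{ex:stablemaps} transported via \cref{claim:secs}, and the second is \cref{eq:Prquotient}; a composite of derived open immersions is a derived open immersion, giving the claim. The uniqueness statement ``the derived enhancement open in both $\bbR\Sec_{\ffM}(\ffC\times\bbP^r/\ffC)$ and $\bbR\Sec_{\Pic_d}(\ffL_d^{\oplus r+1}/\ffC_{\Pic_d})$'' follows because a derived open substack is determined by its underlying classical open substack, here $\Gw$.

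The main obstacle is really \cref{claim:secs}, but that is proved separately (via \cref{prop:compatibility} applied to $\ffZ_1=[\bullet/\bbC^*]\times\ffC$ and $\ffZ_2=\bbP^r\times\ffC$, using that the relevant parallelogram is Cartesian); granting it, the remaining difficulty is purely organizational — one must be careful that ``the derived enhancement of an open substack $U\subset t_0(\bbR\ffX)$'' is unambiguous, i.e. that $U$ inherits a canonical derived structure from $\bbR\ffX$ by restriction and that this is compatible with further open immersions and with the equivalence of \cref{claim:secs}. This is exactly the content of \cref{prop:restriction of sec} and its remark (open, resp. flat, base change of $\bbR\Sec$), so no new input is needed; the argument is a diagram chase through the factorization above.
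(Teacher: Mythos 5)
Your proposal reproduces the paper's own argument: it chains the open immersion of \cref{eq:Prquotient}, the identification of \cref{claim:secs} (itself from \cref{prop:compatibility}), and the factorization of $\Gw\subset\Sec_{\Pic_d}(\ffL_d^{\oplus r+1}/\ffC_{\Pic_d})$ through the intermediate $\Sec$-stack, concluding that all induced derived enhancements agree; the appeal to \cref{prop:restriction of sec} to justify that restriction to an open inherits a canonical derived structure is also what the paper implicitly uses. This is the same approach as the paper's, just organized slightly more explicitly.
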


From this discussion, we can write a point in $\RGw$ as $(C,L,s_0,\dots ,s_r)$ where $C$ is a genus $g$, $n$-marked prestable curve (we suppress the notation for the marked points), $L$ is a degree $d$ line bundle on $C$ and $s_0,\dots, s_r$ are sections. In this notation, the stability conditions of stable maps translate to the following.
\begin{definition}\label{conditions_st_maps_as_sections}[Stability conditions of stable maps as sections]
\begin{enumerate}
\item The bundle $\omega_C^{\log}\otimes L^{\otimes 3}$ is ample, which is a condition on the pair $(C,L)$ already present in $\Pic$,
\item The linear system $(L, s_0,\dots, s_r)$ has no base points.\label{condition:no_base_points_st_maps_as_sections}
\end{enumerate}
\end{definition}
\begin{corollary}\label{thm:potMgn}
There is a forgetful morphism $\Gw\to\Pic_d$ sending 
\[(C, L, s_0,\dots, s_r)\mapsto (C, L)\text{.}\]
The morphism is quasi-smooth with dual perfect obstruction theory
\[
\bbT_{\Gw/\Pic_d}\to \bbE_{\Gw/\Pic_d}=\rder^{\bullet}\pi_{\Gw *}\ccL_{\Gw}^{\oplus r+1}
\]
where $\ccL_{\Gw}$ is the locally-free sheaf on $\ffC_{\Gw}$ obtained from the map into $\Pic_d$. This perfect obstruction theory is compatible with the usual perfect obstruction theory of stable maps in the sense of \cite{Cristina-virtual-pullbacks-2012}.
\end{corollary}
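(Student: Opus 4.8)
The plan is to obtain everything as a direct specialization of the general machinery already assembled in \S\ref{subsec: tangent,POT,section}, applied to the linear-case description of $\RGw$ fixed by \cref{thm:der_structures_on_Mgn}. First, I would record that the forgetful morphism exists at the level of derived stacks: since by \cref{thm:der_structures_on_Mgn} the derived stack $\RGw$ is an open substack of $\bbR\Sec_{\Pic_d}(\ffL_d^{\oplus r+1}/\ffC_{\Pic_d})$, and the latter is by construction a (relative) derived stack over $\Pic_d$, restriction along the open immersion yields a morphism $\RGw\to\Pic_d$; on truncations this is the set-theoretic map $(C,L,s_0,\dots,s_r)\mapsto(C,L)$, using the functor-of-points description in \cref{coro:derived,Rsec} together with the explicit parametrization of points of $\RGw$ recorded after \cref{thm:der_structures_on_Mgn}. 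So the classical morphism $\Gw\to\Pic_d$ is the truncation of a derived morphism.

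Next, quasi-smoothness and the perfect obstruction theory. By \cref{prop:moduli_section_vector_bundle} (or directly \cref{thm:tanS}) applied to $\ffZ=\bbV(\ccL_d^{\oplus r+1})=\ffL_d^{\oplus r+1}$, a derived vector bundle over $\ffC_{\Pic_d}$, we get
\[
\bbT_{\bbR\Sec_{\Pic_d}(\ffL_d^{\oplus r+1}/\ffC_{\Pic_d})/\Pic_d}
=\rder\pi_{*}\lder\ev^{*}\bbT_{\ffL_d^{\oplus r+1}/\ffC_{\Pic_d}}
=\rder\pi_{*}\bigl(\ccL_d^{\oplus r+1}\bigr)\text{,}
\]
the last equality because the relative tangent bundle of a vector bundle $\bbV(\ccF)\to\ffC$ along the base is $p^{*}\ccF$, whose pullback along the universal section is $\ccF$ itself. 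This complex has Tor-amplitude in $[0,1]$ on a nodal curve, hence the derived stack is quasi-smooth over $\Pic_d$; restricting to the open substack $\RGw$ and truncating, \cref{cor:pot} (with base $\Pic_d$ in place of $\ffM$; the proof there goes through verbatim since $\ffZ\to\ffC_{\Pic_d}$ is a smooth relative scheme) produces the perfect obstruction theory $\bbT_{\Gw/\Pic_d}\to\bbE_{\Gw/\Pic_d}=\rder\pi_{\Gw*}\ccL_{\Gw}^{\oplus r+1}$, where $\ccL_{\Gw}$ is the pullback of $\ccL_d$ along $\ffC_{\Gw}\to\ffC_{\Pic_d}$, i.e.\ exactly the sheaf obtained from the map $\Gw\to\Pic_d$. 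The base-change identification needed to match $j^{*}\rder\pi_{\bbR\ffS*}\ev^{*}_{\bbR\ffS}(\cdots)$ with $\rder\pi_{\ffS*}\ev^{*}_{\ffS}(\cdots)$ is the same one used in the proof of \cref{cor:pot}.

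The only genuinely non-formal point—and the step I expect to be the main obstacle—is the last sentence: compatibility with the \emph{absolute} perfect obstruction theory of stable maps in the sense of \cite{Cristina-virtual-pullbacks-2012}. For this I would use the composite $\Gw\to\Pic_d\to\ffM_{g,n}^{\pre}$ and the fact that $\Pic_d\to\ffM_{g,n}^{\pre}$ is smooth (\cref{ex:pic}), so that the relative obstruction theory over $\Pic_d$ and the relative obstruction theory over $\ffM_{g,n}^{\pre}$ fit into a compatible triangle via the smooth morphism $\Pic_d\to\ffM_{g,n}^{\pre}$; concretely, one checks that the cone of $\bbT_{\Pic_d/\ffM}\to\bbE_{\Gw/\ffM}\to\bbE_{\Gw/\Pic_d}$ agrees, under $\rder\pi_{\Gw*}$, with the Euler sequence on $\bbP^r$ pulled back along $f$, which is precisely the statement that $\rder\pi_{\Gw*}f^{*}T_{\bbP^r}$ (the usual obstruction theory, recovered in \S\ref{stable_maps_as_sections}) is the cone of $\rder\pi_{\Gw*}\ccO_{\ffC_{\Gw}}\to\rder\pi_{\Gw*}\ccL_{\Gw}^{\oplus r+1}$. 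Invoking the compatibility of truncation with the derived triangles $\bbT_{\Gw/\Pic_d}\to\bbT_{\Gw/\ffM}\to\bbT_{\Pic_d/\ffM}$ from \cref{thm:tanS}, this reduces the compatibility assertion to the known fact that the section-theoretic and map-theoretic derived structures on $\Gw$ coincide, which is \cref{thm:der_structures_on_Mgn}.
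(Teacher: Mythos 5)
Your proposal is correct and follows the same route as the paper: the paper's proof consists only of the line ``This follows from the discussion above and \cref{cor:pot},'' which is exactly the pipeline you spell out (open embedding into $\bbR\Sec_{\Pic_d}(\ffL_d^{\oplus r+1}/\ffC_{\Pic_d})$ from \cref{thm:der_structures_on_Mgn}, tangent-complex computation from \cref{prop:moduli_section_vector_bundle}/\cref{thm:tanS}, truncation via \cref{cor:pot}). Your last paragraph, identifying the compatibility of the two obstruction theories with $\rder\pi_{\Gw*}$ applied to the Euler sequence of $\bbP^r$ together with the smoothness of $\Pic_d\to\ffM$, makes explicit a step the paper leaves implicit, and is the right way to unwind the reference to \cite{Cristina-virtual-pullbacks-2012}.
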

\begin{proof}
This follows from the discussion above and \cref{cor:pot}.
\end{proof}

\subsection{Quasi-maps to $\bbP^r$ as sections}\label{ss:qm}
We have another way of understanding maps of curves to $\bbP^r$, by relaxing the concept of map and allowing a linear system $(L, s_0, \dots , s_r)$ on a curve $C$ to develop some base points. Consider $\mathfrak{Pic}_d=\mathfrak{Pic}_{g,n,d}$ the usual stack parametrizing genus $g$, $n$-marked pre-stable curves with a degree $d$ bundle without stability conditions. Let $\ffC_{\mathfrak{Pic}_d}, \ffL_d$ denote the universal curve and universal bundle respectively.
\begin{definition}[Stable quasi-maps{\cite[Definition 3.1.1]{Fontanine-QMap2010}}]\label{def: quasi maps}
\[
\Qm\subset\Sec_{\mathfrak{Pic}_d}(\ffL_d^{\oplus r+1}/\ffC_{\mathfrak{Pic}_d})
\]
is the open substack defined by imposing following conditions on each geometric fiber

\begin{enumerate}
\item (non-degeneracy )The linear system $(L, s_0,\dots , s_r)$ has finitely many base points away from the nodes and the markings of $C$.
\item (stability) The line bundle $\omega_C^{\log}\otimes L^{\otimes \epsilon}>0$ for any $\epsilon\in\bbQ_{>0}$.
\end{enumerate}
\end{definition}
The derived enhancement $\Sec_{\mathfrak{Pic}_d}(\ffL_d^{\oplus r+1}/\ffC_{\mathfrak{Pic}_d})\xhookrightarrow{}\bbR \Sec_{\mathfrak{Pic}_d}(\ffL_d^{\oplus r+1}/\ffC_{\mathfrak{Pic}_d})$ gives a derived enhancement $\Qm\xhookrightarrow{j}\RQm$.
The usual perfect obstruction for the moduli of quasi-maps (eg. \cite{Fontanine-QMap2010}) comes from this derived extension.
Indeed, the computation in \cref{thm:tanS} shows that
\[
\bbT_{\Qm/\Picc}\to j^*\bbT_{\RQm/\Picc}=\rder\pi_*f^*\ccO_{\bbP^r}(1)\text{,}
\]
where as usual $\pi$ and $f$ are the universal projection and evaluation respectively from $\ffC_{\RQm}$.
We will see in the next section a slightly different construction of $\Qm$ that yields an equivalent derived enhancement.

\section{Stable maps and quasi-maps to $\bbP^r$}\label{sec:GWtoQM}

In this section, we construct a morphism between the derived enhancement of the moduli space of stable maps to $\bbP^r$ and of quasi maps that is 
\[
\oc:\RGw \to \RQm\text{.}
\]
We prove that 
\[
\oc_* \ccO_{\RGw} = \ccO_{\RQm} \mbox{ in } \ccD^b(Coh(\RQm)\text{.}
\]

\subsection{Revised notation}\label{notation}

From here, we will adopt a slightly different notation from that of the preceding sections in the interest of clarity.
Let $\ffM\coloneqq \ffM_{g,n}$ denote the moduli space of genus $g$ pre-stable curves with $n$ marked points and let $\upi:\uC\to\ffM$ denote its universal curve. Let $\Pic\coloneqq {{\Pic_{g,n,d} }}$ denote the moduli space defined in Example \ref{ex:pic} and let $\upi:\ffC\to\Pic$ denote its universal curve. Recall, that $\Pic$ parametrizes pairs $(C,L)$, with $C$ a prestable curve in $\ffM$ and $L$ is a line bundle of fixed degree $d$ over $C$ subject to the stability conditions in \ref{ex:pic}. Then we define $\ffC$ by the following cartesian diagram

\[
\begin{tikzcd}
\ffC \arrow[d, "\pi"] \arrow[r] \arrow[dr, phantom,"\ulcorner", very near start] 
& \uC \arrow[d, "\upi"] \\
\Pic \arrow[r] & \ffM\text{.}
\end{tikzcd}
\]
 
Notice that $\mathfrak{C} \to  \ffM $ is flat so that the stack fiber product is also the homotopical fiber product. 
Let $\ffL$ over $\ffC$ denote the universal bundle so we have
\begin{equation}\label{eq:L,C,Pic,M}
\begin{tikzcd}
\ffL \arrow[r] &  \ffC \ar[r,"\pi"] & \Pic \ar[r] & \ffM\text{.}
\end{tikzcd}
\end{equation}

\begin{definition}\label{defi:rational,tail}
 Let $C$ be a point in $\ffM$.  A \textit{rational tail} $\Gamma$ in $C$ is a maximal tree of rational components without marked points and such that  $\Gamma\cap \overline{C\setminus \Gamma}$ is a point.
\end{definition}
Let $\widehat{\ffM}$ denote the moduli space of pre-stable curves of genus $g$ with $n$ marked points without rational tails. Let $\upi:\widehat{\uC}\to\widehat{\ffM}$ denote the universal curve. In \cite[p.12]{Fontanine-QMap2010}, the authors prove that $\widehat{\ffM}$ is an open substack of finite type in $\ffM$, with universal curve isomorphic to the restriction of $\ffC$ to $\widehat{\ffM}$.

Let $\widehat{\mathfrak{Pic}}$ denote the Cartesian product
\[
\begin{tikzcd}
\widehat{\mathfrak{Pic}}\arrow[r]\arrow[d]\arrow[dr, phantom, "\ulcorner",very near start] & \mathfrak{Pic}\arrow[d]\\
\widehat{\ffM}\arrow[r] & \ffM\text{.}
\end{tikzcd}
\]
A closed point in $\wh{\mathfrak{Pic}}$ is a pair $(\wh{C},\wh{L})$ of a marked pre-stable curve with no rational tails and a line bundle.
\begin{definition}\label{def:wPic}
As in the case of $\Pic$ let $\wh\Pic$ denote the substack of $\widehat{\mathfrak{Pic}}$
with the additional stability conditions:
\begin{enumerate}
\item for any $\epsilon\in\bbQ_{>0}
$, we have \[\omega_{\wh C}^{\log}\otimes \wh L^{\otimes \epsilon}>0\text{,}\]  Here $\omega_{\wh{C}}^{\log}$ denotes the dualizing sheaf of the curve twisted by the sum of the $n$ marked points.
\item on all components $\wh C_i$ of $\wh C$, we have \[ \deg(\wh L)|_{\wh C_i}\geq 0\text{.}\] 
\end{enumerate}
\end{definition}
We have that $\wh{\Pic}$ is an open substack of $\wh{\mathfrak{Pic}}$.

We define $\wh{\ffC}$ as the following fiber product 
\[
\begin{tikzcd}
\wh{\ffC}\ar[r]\ar[d, "\wh\pi"] \ar[dr,phantom,"\ulcorner"
,very near start]& \wh{\uC}\ar[d, "\wh\upi"]\\
\widehat{\Pic}\ar[r]& \wh\ffM\text{.}
\end{tikzcd}
\]
We have a universal line bundle, denoted by $\wh{\ffL}$ over $\wh{\ffC}$.
As in \eqref{eq:L,C,Pic,M}, we have the following morphisms.
\begin{equation}\label{eq:hat,L,C,Pic,M}
\begin{tikzcd}
\wh{\ffL} \arrow[r] &  \wh{\ffC} \ar[r,"\wh{\pi}"] & \wh{\Pic} \ar[r] & \wh{\ffM}\text{.}
\end{tikzcd}
\end{equation}

\subsection{Quasi-maps are defined over $\wPic$}\label{ss:qm over pic hat}
In \cref{ss:qm} we defined
\begin{equation}\label{def:RQm_over_Pic}
\RQm\subset\bbR\Sec_{\mathfrak{Pic}}(\ffL^{\oplus r+1}/\ffC_{\mathfrak{Pic}})\text{.}
\end{equation}
Given the definitions of this sections, we have a new substack $\wPic\subset \mathfrak{Pic}$.
The stability conditions of quasi-maps imply that the source curve cannot have rational tails. So the morphism $\RQm\to\mathfrak{Pic}$ factors through $\wPicc$. Moreover, the stability conditions of $\RQm$ imply those of \cref{def:wPic}, so we obtain a morphism $\RQm\to\wPic\subset\mathfrak{Pic}$. 

By \cref{prop:restriction of sec}, we have that
\[
\bbR\Sec_{\wPic}(\wh{\ffL}^{\oplus r+1}/\wh{\ffC}) = \wPic\times^h_{\mathfrak{Pic}}\bbR\Sec_{\mathfrak{Pic}}(\ffL^{\oplus r+1}/\ffC_{\mathfrak{Pic}})\text{.}
\]
So the open embedding in \eqref{def:RQm_over_Pic} factors through an open embedding $\RQm\subset \bbR\Sec_{\wPic}(\wh{\ffL}^{\oplus r+1}/\wh{\ffC})$. 
We state the implications of this below.
\begin{proposition}
The moduli space of quasi-maps $\Qm$ has a forgetful morphism to the stack $\wPic$ of pre-stable curves with no rational tails with a stable line bundle (\cref{def:wPic}). Over $\wPic$ it admits an open embedding into the derived stack of sections $\bbR\Sec_{\wPic}(\wh{\ffL}^{\oplus r+1}/\wh{\ffC})$. This endows $\Qm$ with a derived enhancement $\RQm$ which is compatible with the derived enhancement from \cref{ss:qm}.

In particular, this derived enhancement recovers the canonical perfect obstruction theory of the moduli space of quasi-maps.
\end{proposition}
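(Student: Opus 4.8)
The plan is to assemble the observations already made in the discussion preceding the statement into a single proposition, so that the proof is essentially bookkeeping: the statement has three assertions --- (i) the existence of a forgetful morphism $\Qm\to\wPic$, (ii) an open embedding of $\Qm$ into $\bbR\Sec_{\wPic}(\wh{\ffL}^{\oplus r+1}/\wh{\ffC})$ inducing a derived enhancement compatible with the one of \cref{ss:qm}, and (iii) the recovery of the canonical perfect obstruction theory --- and I would establish them in that order. The only genuinely new content is the pair of implications between stability conditions underlying (i), and that is the step I expect to require actual (though routine) work.

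\textbf{Step 1 (the morphism $\Qm\to\wPic$).} Starting from the open immersion $\Qm\subset\Sec_{\mathfrak{Pic}}(\ffL^{\oplus r+1}/\ffC_{\mathfrak{Pic}})$ of \cref{def: quasi maps}, I would record the structural morphism $\Qm\to\mathfrak{Pic}$ and check pointwise, on geometric fibres, that it factors through $\wPic$. Given a stable quasi-map $(C,L,s_0,\dots,s_r)$, non-degeneracy forces $(s_0,\dots,s_r)$ not to vanish identically on any component, so $L$ restricts to an effective line bundle, hence of non-negative degree, on each component of $C$; this is condition (2) of \cref{def:wPic}, while condition (1) of \cref{def:wPic} is literally the stability condition in \cref{def: quasi maps}. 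Finally, if $C$ had a rational tail it would contain a leaf component $E\cong\bbP^1$ meeting the rest of $C$ in a single point and carrying no marking, on which $\deg(\omega_C^{\log}|_E)=-1$ and $\deg(L|_E)\geq 0$, so $\deg\bigl((\omega_C^{\log}\otimes L^{\otimes\epsilon})|_E\bigr)=-1+\epsilon\deg(L|_E)<0$ for sufficiently small $\epsilon\in\bbQ_{>0}$, contradicting stability. Thus $C$ has no rational tails and $\Qm\to\mathfrak{Pic}$ factors through $\wPic\subset\mathfrak{Pic}$. This degree bookkeeping is the main obstacle, but it is elementary.

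\textbf{Step 2 (the open embedding over $\wPic$ and compatibility).} Since $\wPic\subset\mathfrak{Pic}$ is open, \cref{prop:restriction of sec} gives $\bbR\Sec_{\wPic}(\wh{\ffL}^{\oplus r+1}/\wh{\ffC})\simeq\wPic\times^h_{\mathfrak{Pic}}\bbR\Sec_{\mathfrak{Pic}}(\ffL^{\oplus r+1}/\ffC_{\mathfrak{Pic}})$, which is an open derived substack of the ambient $\bbR\Sec_{\mathfrak{Pic}}(\ffL^{\oplus r+1}/\ffC_{\mathfrak{Pic}})$. By Step 1 the open immersion $\Qm\hookrightarrow\bbR\Sec_{\mathfrak{Pic}}(\ffL^{\oplus r+1}/\ffC_{\mathfrak{Pic}})$ of \cref{ss:qm} lands, on underlying points, inside this open substack, hence factors through it as an open immersion $\Qm\subset\bbR\Sec_{\wPic}(\wh{\ffL}^{\oplus r+1}/\wh{\ffC})$. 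The induced derived enhancement $\RQm$ coincides with that of \cref{ss:qm} because both are the homotopy fibre product of the classical $\Qm$ with the relevant derived section stack over its truncation, and those two section stacks are identified by \cref{prop:restriction of sec}. For the last assertion, $\wh{\ffL}^{\oplus r+1}\to\wh{\ffC}$ is a vector bundle, in particular a smooth representable (hence Deligne--Mumford) morphism, so \cref{cor:pot} applies relative to $\wPic$ and produces the relative perfect obstruction theory $\bbT_{\Qm/\wPic}\to\rder\wh{\pi}_{\Qm*}\,\ev_{\Qm}^*T_{\wh{\ffL}^{\oplus r+1}/\wh{\ffC}}=\rder\wh{\pi}_{\Qm*}\,\ccL_{\Qm}^{\oplus r+1}$, which is the perfect obstruction theory used in \cite{Fontanine-QMap2010}, exactly as already noted in \cref{ss:qm}. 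I do not anticipate any substantial difficulty beyond Step 1.
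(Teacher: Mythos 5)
Your proposal is correct and takes essentially the same approach as the paper, which proves the proposition via the discussion immediately preceding it: showing that the stability conditions of quasi-maps force the morphism $\Qm\to\mathfrak{Pic}$ to factor through $\wPic$, then invoking \cref{prop:restriction of sec} to identify $\bbR\Sec_{\wPic}(\wh{\ffL}^{\oplus r+1}/\wh{\ffC})$ as the open derived substack sitting over $\wPic$, and finally deducing the perfect obstruction theory from the general theory. The only difference is that you spell out the pointwise verification (non-negative degrees from non-degeneracy, absence of rational tails from the $\epsilon$-stability) which the paper merely asserts, so your Step 1 is a useful expansion of what the authors leave implicit.
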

\subsection{Derived morphism between stable maps and quasi-maps}\label{map to qmap}
In this subsection, we want to construct the following.
\begin{enumerate}
    \item A commutative diagram 
    \begin{equation}\label{diag}
\begin{tikzcd}
\ffL \ar[d] \arrow[r]& \wh{\ffL} \ar[d]\\
\ffC \ar[r,"k"] \ar[d,"\pi"] & \widehat{\ffC} \ar[d,"\wh{\pi}"]\\
\Pic\ar[d] \ar[r,"c"] & \widehat{\Pic} \ar[d] \\ 
\ffM\ar[r,"\uc"] & \widehat{\ffM}
\end{tikzcd}
    \end{equation}
    which relates \eqref{eq:L,C,Pic,M} and \eqref{eq:hat,L,C,Pic,M}.
\item A morphism (see Proposition \ref{prop:div,rational,tails} below)
\[\oc :\bbR \Sec_{{\Pic }} (\ffL^{\oplus r+1}/\ffC) \to \bbR \Sec_{{\widehat{\Pic} }} (\widehat{\ffL}^{\oplus r+1}/\widehat{\ffC})
\] that restricts to a morphism $\bbR\overline{\ccM} (\bbP^r,d) \to \bbR\overline{\ccQ} (\bbP^r,d).$
\end{enumerate}

\subsubsection{Construction of contraction morphism $\uc: \ffM\to\wh{\ffM}$}\label{subsubsection:contraction:M,hatM}

Recall that in \cite[proof of Thm 7.1]{Popo-roth-stable-maps-Quot-2003} or \cite[Prop 2.3]{Cristina-stable-maps-quotients-2014}) one can construct a non separated\footnote{This morphism is not separated because, the trivial family $\bbP^1\times \bbA^1\setminus \{0\} \to \bbA^1\setminus \{0\}$ can be completed at $\{0\}$ by $\bbP^1$ or the blowup of the trivial family in any number of points in the special fibre.} morphism 
\[
\uc:\ffM\to\wh\ffM
\]
which contracts the rational tails. For $S\to\ffM$,

\begin{align*}
\uc: \ffM&\to\wh{\ffM}\\
(C_S,S)&\mapsto (\wh{C}_S,S)
\end{align*}
where $\wh{C}_S$ is the family $C_S$ with rational tails contracted in each fiber.
Recall that $\underline{\ffC}$ (resp. $\underline{\wh{\ffC}}$) is the universal curve of $\ffM$ (resp. $\wh{\ffM}$). Moreover, we have a commutative diagram which is not Cartesian
\begin{equation}\label{diag:M,hatM,curve,uni}
\begin{tikzcd}
\underline{\ffC}\ar[d,"\upi"] \ar[r,"\uk"] & \underline{\widehat{\ffC}} \ar[d,"\widehat{\upi}"]\\ \ffM\ar[r,"\uc"] & \widehat{\ffM}\text{.}
\end{tikzcd}
\end{equation}
Notice that $\uc$ is a birational morphism. 
\subsubsection{Construction of contraction of tails morphism $c:\Pic \to \wh{\Pic}$} \label{subsubsection:Pic,to,whPic}
Recall that we have \eqref{eq:L,C,Pic,M},
\[\begin{tikzcd}
\ffL \arrow[r] &  \ffC \ar[r,"\pi"] & \Pic \ar[r] & \ffM\text{.}
\end{tikzcd}
\]

\begin{definition}\label{defi:divisor,rational,tails}
Let denote $\ffD$ by the divisor in $\ffC$ consisting of rational tails. By considering the restriction of the universal bundle $\ffL$ on $\ffD$ we can split the divisor into 
\[
\ffD=\bigsqcup_{i=1}^d \ffD_i
\]
such that $\ffL|_{\ffD_i}$ has degree $\delta_i$.  
We write $\delta\ffD$ for $\sum_i\delta_i\ffD$.
\end{definition}

We first define $c:\Pic \to \wh{\Pic}$ at the level of points (see \cite[\S 2.2]{Cristina-stable-maps-quotients-2014}). Let $(C,L)\in \Pic$. Let $T_i$ be the rational tails of $C$ and let $\delta_i$ denote the total degree of $L$ on $T_i$. Notice that $\sum_i \delta_i=\deg(L|_{\sqcup_i T_i})$. Let $\widehat{C}$ be the closure of $C\backslash \bigcup_i T_i$. Let $Q_i$ denote the point $T_i\cap \wh C$. We define
\[\wh L:= L|_{\wh C}(\sum_i\delta_i Q_i)\text{.}
\]
In families we proceed similarly: let $S\to \Pic$ with a family of curves $C_S\to S$ and a line bundle $\ccL_S$.
We define $\wh{C}_S:=\underline{c}(C_S)$ contracting rational tails.
We put
\[
\wh{\ccL}_S:=\ccL|_{\wh{C_S}}\left(\delta \ffD\right)\text{,}
\]
and we obtain a morphism 
\begin{align*}
    c: \Pic &\to \wh{\mathfrak{Pic}} \\
     (C_S,\ccL_S) & \mapsto (\wh{C}_S,\wh{\ccL}_{S})\text{.}
\end{align*}
To show it factors as the required morphism
\[
c:\Pic\to\wPic
\]
we need to check that the following are true:
\begin{enumerate}
    \item If $L$ has non-negative degree on each component of $C$, then $\wh{L}$ has non-negative degree on each component of $\wh{C}$. 
    \item If $\omega_C^{\log}\otimes L^{\otimes 3}>0$, then $\omega_{\wh{C}}^{\log}\otimes \wh{L}^{\otimes \epsilon}>0$ for all $\epsilon\in\bbQ_{>0}$.
\end{enumerate}
The first statement is clear. 
The only case where the first condition does not immediately imply the second is that of a genus 0 component $C_i$ with less than two marked points. The first condition then requires that the degree of $L|_{C_i}$ is at least 1. Note that any component of $\wh{C}$ has at least one marked point, so the degree of $\omega_{\wh{C}}^{\log}$ is greater or equal than $-1$.  This shows that the degree of $\wh{L}|_{\uc({C_i})}$ is at least 1 and thus the claim.

We thus get the following commutative diagram
\begin{equation}
\begin{tikzcd}
\Pic\ar[d] \ar[r,"c"] & \widehat{\Pic} \ar[d] \\ 
\ffM\ar[r,"\uc"] & \widehat{\ffM}\text{.}
\end{tikzcd}
    \end{equation}

\subsubsection{Construction of the morphism $k:\ffC \to \wh{\ffC}$} \label{subsubsection:C,to,whC}
 As $\ffC:=\underline{\ffC}\times_\ffM \Pic$ (resp. $\wh{\ffC}:=\wh{\underline{\ffC}}\times_{\wh{\ffM}} \wh{\Pic}$) and the Cartesian diagram \eqref{diag:M,hatM,curve,uni}, writing all the diagrams, we get the morphism $k:\ffC \to \wh{\ffC}$ such that the following diagram is commutative 
\begin{equation}
\begin{tikzcd}
\ffC \ar[r,"k"] \ar[d,"\pi"] & \widehat{\ffC} \ar[d,"\wh{\pi}"]\\
\Pic\ar[d] \ar[r,"c"] & \widehat{\Pic} \ar[d] \\ 
\ffM\ar[r,"\uc"] & \widehat{\ffM}\text{.}
\end{tikzcd}
    \end{equation}

\subsubsection{Construction of the morphism $\ffL \to \wh{\ffL}$} \label{subsubsection:L,to,whL}
We can decompose the morphism $k$ as $\ell\circ\kappa$ as in the following diagram:
\begin{equation}\label{diag:big}
\begin{tikzcd}
\ffL \ar[d] & \ell^*\wh{\ffL}\ar[d] \ar[r] \ar[rd,phantom,"\ulcorner", very near start]&\wh{\ffL} \ar[d]\\
\ffC \ar[r,"\kappa"] \ar[rd,"\pi"]& c^*\wh{\ffC} \ar[d,"c^*\wh{\pi}"] \ar[r,"\ell"]  \ar[rd,phantom,"\ulcorner", very near start]& \widehat{\ffC} \ar[d,"\wh{\pi}"]\\
&\Pic\ar[d] \ar[r,"c"] & \widehat{\Pic} \ar[d] \\ 
&\ffM\ar[r,"\uc"] & \widehat{\ffM}\text{.}
\end{tikzcd}
\end{equation}
Notice that $\wh{\pi}, \pi$ and $c^*\wh{\pi}$ are projective, so  $\kappa$ is projective. As $\kappa$ is birational and $\ccL(\delta \ffD)$ is trivial on rational tails, we have that 
\[\rder^1 \kappa_*\ccL(\delta \ffD)=0\text{.}\]
Then $\rder^0\kappa_*\ccL(\delta \ffD)$ is a line bundle on $c^*\wh{\ffC}$.
(See \cite[Lemma 7.1 and p.652-654]{Popo-roth-stable-maps-Quot-2003}. 

\begin{claim}\label{claim:pullback,line}
We have that 
\[
\rder^0{\kappa}_*\ccL(\delta \ffD) = \ell^* \wh{\ccL} \text{.}
\]
\end{claim}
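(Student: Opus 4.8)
The plan is to verify the identity $\rder^0\kappa_*\ccL(\delta\ffD)=\ell^*\wh\ccL$ by comparing the two line bundles on $c^*\wh\ffC$ after noting they are both constructed from the same contraction procedure. First I would recall the set-up: by construction $\kappa\colon\ffC\to c^*\wh\ffC$ is the fibrewise contraction of the rational tails (it is the base-change of $\uk\colon\uC\to\underline{\wh\ffC}$ along $c\colon\Pic\to\wh\Pic$, hence it contracts exactly the divisor $\ffD$ of rational tails), and $\ell\colon c^*\wh\ffC\to\wh\ffC$ is a flat base-change. The statement is local on $\Pic$ and compatible with base-change along $\Spec$ of a test ring, so by cohomology-and-base-change (using $\rder^1\kappa_*\ccL(\delta\ffD)=0$, which is already established in the excerpt and ensures $\rder^0\kappa_*$ commutes with base change) it suffices to check the equality fibrewise over geometric points, i.e. to reprove it for a single prestable curve $C$ with line bundle $L$.

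Over a geometric point $(C,L)\in\Pic$ with rational tails $T_i$ meeting $\wh C$ at $Q_i$ and $\delta_i=\deg L|_{T_i}$, I would argue as follows. Write $\kappa\colon C\to\wh C$ for the contraction. Away from the nodes $Q_i$, $\kappa$ is an isomorphism, so $\rder^0\kappa_*\bigl(L(\sum_i\delta_i T_i)\bigr)$ agrees with $L$ there; the only issue is the stalk at each $Q_i$. Since $\ffD=\bigsqcup\ffD_i$ and $\delta\ffD=\sum_i\delta_i\ffD$, on a tail $T_i$ one has $\deg\bigl(L(\delta\ffD)\bigr)|_{T_i}=\delta_i+\delta_i\cdot(\ffD_i\cdot T_i)$; a standard computation (the self-intersection of the exceptional locus, as in \cite[Lemma 7.1]{Popo-roth-stable-maps-Quot-2003}) shows this forces the pushforward to be the line bundle $L|_{\wh C}(\sum_i\delta_i Q_i)$ near $Q_i$ — exactly the twist by $\delta_i Q_i$ needed to absorb the degree that ``fell off'' the contracted tail. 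Comparing with \cref{subsubsection:Pic,to,whPic}, where $\wh L$ was \emph{defined} as $L|_{\wh C}(\sum_i\delta_i Q_i)$, and with the fact that $\ell^*\wh\ffL$ restricts on the fibre to $\wh L$ by flat base-change, we get the fibrewise equality. Globalising, both sides are line bundles on $c^*\wh\ffC$ agreeing after every base-change to a geometric point, and $\rder^0\kappa_*\ccL(\delta\ffD)$ is already known to be a line bundle; a morphism between line bundles that is an isomorphism on all fibres is an isomorphism, so I would exhibit the natural comparison map $\ell^*\wh\ffL\to\rder^0\kappa_*\ccL(\delta\ffD)$ (adjoint to $\kappa^*\ell^*\wh\ffL=k^*\wh\ffL\to\ccL(\delta\ffD)$, which is the map coming from diagram \eqref{diag:big} twisted appropriately) and conclude it is an isomorphism.

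The main obstacle I expect is pinning down the natural comparison map and checking it is the \emph{right} map — i.e.\ constructing a canonical morphism $k^*\wh\ffL\to\ccL(\delta\ffD)$ of line bundles on $\ffC$ rather than merely knowing the two pushforwards are abstractly isomorphic line bundles. Concretely, one must see that pulling back the tautological section/trivialisation of $\wh\ffL$ along the tails, together with the definition of $\ffD_i$ via $\deg\ffL|_{\ffD_i}=\delta_i$, produces a section vanishing to exactly the order prescribed by $\delta\ffD$, so that the induced map of invertible sheaves is an isomorphism away from nothing. The rest — flatness of $\ell$, vanishing of $\rder^1\kappa_*$, base-change — is routine and already available in the excerpt.
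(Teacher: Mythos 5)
Your proof takes a genuinely different route from the paper's. The paper's argument is a Hartogs-type extension on the total space: it observes that $\kappa$ is birational, hence the two line bundles on $c^*\wh\ffC$ agree away from the image of the tails (the attaching locus), which has codimension $2$ on a smooth atlas of $c^*\wh\ffC$; since line bundles on a normal space that agree away from codimension $2$ must agree, the claim follows in one stroke without ever exhibiting a comparison morphism. Your strategy instead reduces to geometric fibres via cohomology-and-base-change (using the already-established $\rder^1\kappa_*\ccL(\delta\ffD)=0$), performs the explicit degree/self-intersection computation on a single curve $(C,L)$ to match $\kappa_*\bigl(L(\sum\delta_iT_i)\bigr)$ with $L|_{\wh C}(\sum\delta_iQ_i)=\wh L$, and then globalises by showing a comparison map is a fibrewise isomorphism. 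Both approaches are sound; yours is more hands-on and makes the role of the tail degrees transparent, while the paper's avoids the need for a comparison map entirely, which is precisely what buys the brevity.

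One point of caution, which you yourself correctly flag as the main obstacle: the comparison map $k^*\wh\ffL=\kappa^*\ell^*\wh\ffL\to\ccL(\delta\ffD)$ you propose to use cannot simply be ``the map coming from diagram \eqref{diag:big} twisted appropriately,'' because in the paper the arrow $\ffL\to\ell^*\wh\ffL$ in that diagram is constructed \emph{after}, and \emph{using}, the very claim you are proving (via $\ccL\to\ccL(\delta\ffD)=\kappa^*\kappa_*\ccL(\delta\ffD)=\kappa^*\ell^*\wh\ccL$). To avoid circularity you would need to produce the identification $\kappa^*\ell^*\wh\ffL\simeq\ccL(\delta\ffD)$ independently, e.g.\ directly from the modular description of $c\colon\Pic\to\wPic$ (which identifies $\ell^*\wh\ffL$ away from the attaching locus with $\ccL$) together with the degree computation on the tails; once that is spelled out, the rest of your argument goes through. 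Alternatively, you could sidestep the comparison map altogether by adopting the paper's Hartogs step for the globalisation while keeping your fibrewise degree computation as the justification that the two bundles agree on the open locus.
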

\begin{proof}[Proof of the claim \ref{claim:pullback,line}]
As $\kappa$ is birational, the two sheaves are isomorphic away from the tails. On the tails, both are trivial.
On a smooth atlas of $c^*\wh{\ffC}$, they are isomorphic away from the locus where the tails are attached to the curve which is of codimension  $2$.
We deduce the statement.
\end{proof}

At the level of sheaves we have :
\[
\ccL \to \ccL (\delta \ffD) \mbox{ and by adjunction } \kappa^*\kappa_* \ccL (\delta \ffD) \to \ccL (\delta \ffD)\text{.}
\]

Notice that $\kappa^*\kappa_* \ccL (\delta \ffD)=\ccL(\delta \ffD)$ because both are isomorphic outside tails and trivial on tails.
Finally, we get a morphism from 
\[
\ccL \to \ccL(\delta \ffD)=\kappa^*\kappa_*\ccL(\delta \ffD)=\kappa^*\ell^*\wh{\ccL}\text{,}
\]
which leads to a morphism $\ffL \to \kappa_*\ffL(\delta \ffD)=\ell^*\wh{\ffL}$ that fills the diagram \eqref{diag:big}.

\subsubsection{Construction of the morphism $\oc :\bbR \Sec_{{\Pic }} (\ffL^{\oplus r+1}/\ffC) \to \bbR \Sec_{{\widehat{\Pic} }} (\widehat{\ffL}^{\oplus r+1}/\widehat{\ffC})$}\label{subsub:morphism,between,secL,secLhat}

\begin{theorem}\label{prop:div,rational,tails}
We have a morphism
\[\oc :\bbR \Sec_{{\Pic }} (\ffL^{\oplus r+1}/\ffC) \to \bbR \Sec_{{\widehat{\Pic} }} (\widehat{\ffL}^{\oplus r+1}/\widehat{\ffC})\text{.}
\]
Moreover, the restriction of $\oc$ to $\RGw$ factors through \[\RQm\subset \bbR \Sec_{{\widehat{\Pic} }} (\widehat{\ffL}^{\oplus r+1}/\widehat{\ffC}),\] giving a morphism, denoted by the same name,
\[
\oc:\RGw\to\RQm \text{.}
\]
\end{theorem}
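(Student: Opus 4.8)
The plan is to realise both sides as relative derived spectra via Proposition~\ref{prop:moduli_section_vector_bundle}, to produce $\oc$ from a single map of perfect complexes on $\Pic$, and then to check the factorisation through $\RQm$ at the level of truncations, where it reduces to the classical contraction theorem.

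\textbf{Construction of $\oc$.} By Proposition~\ref{prop:moduli_section_vector_bundle} we have
\[
\bbR\Sec_{\Pic}(\ffL^{\oplus r+1}/\ffC)=\bbR\Spec_{\Pic}\Sym\bigl((\rder\pi_*\ccL^{\oplus r+1})^{\vee}\bigr)
\]
and
\[
\bbR\Sec_{\wPic}(\wh{\ffL}^{\oplus r+1}/\wh{\ffC})=\bbR\Spec_{\wPic}\Sym\bigl((\rder\wh{\pi}_*\wh{\ccL}^{\oplus r+1})^{\vee}\bigr),
\]
where $\ccL,\wh{\ccL}$ are the locally free sheaves of $\ffL,\wh{\ffL}$. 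Thus a morphism $\oc$ covering $c\colon\Pic\to\wPic$ is the same datum as a map of $\ccO_{\Pic}$-algebras $\lder c^{*}\Sym((\rder\wh{\pi}_*\wh{\ccL}^{\oplus r+1})^{\vee})\to\Sym((\rder\pi_*\ccL^{\oplus r+1})^{\vee})$, and for this it suffices to give a morphism of perfect complexes on $\Pic$
\[
\rder\pi_*\ccL^{\oplus r+1}\longrightarrow\lder c^{*}\rder\wh{\pi}_*\wh{\ccL}^{\oplus r+1}.
\]
To build it, recall from \eqref{diag:big} that $\pi$ factors as $(c^{*}\wh{\pi})\circ\kappa$, with $\kappa\colon\ffC\to c^{*}\wh{\ffC}$ the projective birational contraction and $\ell\colon c^{*}\wh{\ffC}\to\wh{\ffC}$ the base change of $\wh{\ffC}$ along $c$; since $\wh{\pi}$ is flat, derived base change identifies $\lder c^{*}\rder\wh{\pi}_*\wh{\ccL}^{\oplus r+1}$ with $\rder(c^{*}\wh{\pi})_*\ell^{*}\wh{\ccL}^{\oplus r+1}$. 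The twist-up $\ccL\hookrightarrow\ccL(\delta\ffD)$ of \S\ref{subsubsection:L,to,whL} induces $\rder\kappa_*\ccL^{\oplus r+1}\to\rder\kappa_*\ccL(\delta\ffD)^{\oplus r+1}$, and by Claim~\ref{claim:pullback,line} together with $\rder^{1}\kappa_*\ccL(\delta\ffD)=0$ the target is canonically $\ell^{*}\wh{\ccL}^{\oplus r+1}$; applying $\rder(c^{*}\wh{\pi})_*$ and using $\rder\pi_*=\rder(c^{*}\wh{\pi})_*\rder\kappa_*$ gives the desired map. Dualising, applying $\Sym$ and $\bbR\Spec_{\Pic}$, and composing with the projection $\Pic\times^{h}_{\wPic}\bbR\Sec_{\wPic}(\wh{\ffL}^{\oplus r+1}/\wh{\ffC})\to\bbR\Sec_{\wPic}(\wh{\ffL}^{\oplus r+1}/\wh{\ffC})$ yields $\oc$. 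Unwinding the adjunctions, on a classical test scheme $T\to\Pic$ the morphism $\oc$ sends a tuple $(s_0,\dots,s_r)$ of sections of $\ccL_T$ to the tuple obtained by composing with $\ccL_T\hookrightarrow\ccL_T(\delta\ffD_T)$ and pushing forward along $\kappa_T$.

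\textbf{Factorisation through $\RQm$.} By Theorem~\ref{thm:der_structures_on_Mgn}, $\RGw$ is an open derived substack of $\bbR\Sec_{\Pic}(\ffL^{\oplus r+1}/\ffC)$; as explained in \S\ref{ss:qm} (and in the subsequent discussion that $\RQm$ is already defined over $\wPic$), $\RQm$ is an open derived substack of $\bbR\Sec_{\wPic}(\wh{\ffL}^{\oplus r+1}/\wh{\ffC})$. A morphism of derived stacks factors through an open substack of its target exactly when the induced map of underlying topological spaces does, equivalently exactly when the truncated morphism does; so it suffices to prove $t_0(\oc)(\Gw)\subseteq\Qm$. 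By Corollary~\ref{coro:derived,Rsec} and the functor-of-points description above, $t_0(\oc)$ is the morphism of classical section stacks sending $(C,L,s_0,\dots,s_r)$ to $(\wh{C},\wh{L},\hat{s}_0,\dots,\hat{s}_r)$, with $\wh{C}$ the contraction of the rational tails, $\wh{L}=L|_{\wh{C}}(\delta\ffD)$ and $\hat{s}_i$ the pushforward of $s_i$; this is precisely the contraction morphism of \cite{Fontanine-QMap2010}, \cite{Marian-Oprea-Pand-moduli-stable-2011} and \cite{Manolache-virtual-push-2012}. For a point of $\Gw$ the linear system $(L,s_0,\dots,s_r)$ is base-point free (Definition~\ref{conditions_st_maps_as_sections}), so the base points of $(\wh{L},\hat{s}_0,\dots,\hat{s}_r)$ are exactly the finitely many contraction points $Q_i=\Gamma_i\cap\overline{C\setminus\Gamma_i}$; each $Q_i$ is a node of $C$, hence a smooth unmarked point of $\wh{C}$, so the non-degeneracy condition of $\Qm$ holds, while the stability inequality $\omega_{\wh{C}}^{\log}\otimes\wh{L}^{\otimes\epsilon}>0$ is built into the definition of $\wPic$ (Definition~\ref{def:wPic}). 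Therefore $t_0(\oc)(\Gw)\subseteq\Qm$, and $\oc|_{\RGw}$ factors through $\RQm$.

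\textbf{Expected main obstacle.} With Proposition~\ref{prop:moduli_section_vector_bundle} and Claim~\ref{claim:pullback,line} in hand, the construction of $\oc$ is essentially formal. The substantive point is the second part: identifying $t_0(\oc)$ with the classical contraction morphism, and checking that it lands in $\Qm$. The latter is a known classical statement, so the real work here is organisational --- making precise the reduction ``factoring through an open substack is detected on truncations'', and matching the functor-of-points description of $t_0(\oc)$ coming from our $\bbR\Spec$ presentation with the contraction morphism as defined in the cited references. The one genuinely geometric input, worth isolating in the write-up, is that the newly created base points of the contracted linear system really do avoid the nodes and markings of $\wh{C}$, which follows because the attaching point of a rational tail is a node of the stable source curve.
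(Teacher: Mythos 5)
Your proof is correct and takes essentially the same approach as the paper. The one presentational difference is that you route the construction of $\oc$ through the $\bbR\Spec_{\Pic}\Sym(-)^{\vee}$ picture from Proposition~\ref{prop:moduli_section_vector_bundle}, reducing everything to a single map of perfect complexes $\rder\pi_*\ccL^{\oplus r+1}\to\lder c^{*}\rder\wh{\pi}_*\wh{\ccL}^{\oplus r+1}$, whereas the paper stays on the geometric side and builds the same chain of equivalences directly at the level of $\bbR\Sec$ stacks (\eqref{morph-mult}, \eqref{eq:1}, \eqref{morph-r-sec}); the key inputs --- the twist-up $\ccL\hookrightarrow\ccL(\delta\ffD)$, the vanishing $\rder^{1}\kappa_*\ccL(\delta\ffD)=0$, Claim~\ref{claim:pullback,line}, cohomology and base change along $c$, and the pointwise check of stability at attaching nodes --- are identical, and your explicit observation that open immersions are detected on truncations makes precise the paper's ``we can check this on points.''
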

\begin{proof}
Multiplication by the canonical section gives a morphism $a:\ccL \to \ccL(\delta\ffD)$. We have the divisor exact sequence  
\begin{equation}
   0\to \ccL \to \ccL(\delta\ffD)\to \ccL(\delta\ffD)|_{\delta\ffD}\to 0
\end{equation}
over $\Pic$.   

The morphism $a$ of sheaves induces a morphism $\ffL\to \ffL(\delta\ffD)$ of total spaces, which induces
\begin{equation}\label{morph-mult}
   \bbR \Sec_{{\Pic }} (\ffL/\ffC)\to \bbR \Sec_{{\Pic }}(\ffL(\delta\ffD)/\ffC)\text{.}
\end{equation}
Now recall the locally-free sheaves
\[
\begin{tikzcd}
\ccL(\delta\ffD)\ar[r]\ar[d] & \rder^0\kappa_*\ccL(\delta\ffD)=\ell^*\wh{\ccL}\ar[d]\\
\ffC\ar[r, "\kappa"]\ar[dr,"\pi"] & c^*\wh{\ffC}\ar[d,"c^*\wh{\pi}"]\\
& \Pic
\end{tikzcd}
\]
and let $\kappa_*\ffL(\delta\ffD)$ denote the total space of $\rder^0\kappa_*\ccL(\delta\ffD)$.
There is an equivalence
\begin{align}
\bbR \Sec_{{\Pic }}(\ffL(\delta\ffD)/\ffC)&=\Spec_{\Pic}\Sym(\rder\pi_*\ccL(\delta\ffD))^{\vee} \nonumber\\
&\simeq\Spec_{\Pic}\Sym(\rder(c^*\wh{\pi})_*\kappa_*\ccL(\delta\ffD))^{\vee}\nonumber\\
&=\bbR\Sec_{\Pic}(\kappa_*\ffL(\delta\ffD)/c^*\wh{\ffC})\text{.} \label{eq:1}
\end{align}
This equivalence is simply a restatement of the fact that the sections of the push-forward of a sheaf on an open are sections of the original sheaf on the preimage.
Then by claim \ref{claim:pullback,line} and \eqref{eq:1}, we have
\begin{equation}
\bbR \Sec_{{\Pic }}(\ffL(\delta\ffD)/\ffC)\simeq \bbR\Sec_{\Pic}(\kappa_*\ffL(\delta\ffD)/c^*\wh{\ffC}) \simeq \bbR\Sec_{\Pic}(\ell^*\wh{\ffL}/c^*\wh{\ffC})\text{.}
\end{equation}
Now we just have to construct a morphism 
\[
\bbR\Sec_{\Pic}(\ell^*\wh{\ffL}/c^*\wh{\ffC})\to \bbR \Sec_{{\wPic }}(\wh{\ffL}/\wh{\ffC})\text{.}
\]

Let us consider the cartesian diagram
\[
\begin{tikzcd}
c^*\wh\ffC\ar[r, "\ell"] \ar[d,"\rho=c^*\wh{\pi}"']&\wh\ffC\ar[d,"\wh\pi"]\\
\Pic\ar[r,"c"]&\wh\Pic\text{.}
\end{tikzcd}
\]
By cohomology and base change, we get an isomorphism
\[ \rder\rho_* \ell^*\wh\ccL\to c^*\rder\wh\pi_*\wh\ccL\text{,}
\]
 that is we deduce that at the level of spaces, we have  
 \[\bbR \Sec_{{\Pic }} \left(\ffL\left(\delta\ffD\right)/\ffC\right)  \simeq c^*\bbR \Sec_{{\widehat{\Pic} }} (\widehat{\ffL}/\widehat{\ffC})=\bbR\Sec_{\Pic}(\ell^*\wh{\ffL}/c^*\wh{\ffC})\text{.}\]

By composing, we deduce a morphism
 \begin{equation}\label{morph-r-sec}
 \bbR \Sec_{{\Pic }} (\ffL(\delta\ffD)/\ffC) \simeq c^*\bbR \Sec_{{\widehat{\Pic} }} (\widehat{\ffL}/\widehat{\ffC}) \to \bbR \Sec_{{\widehat{\Pic} }} (\widehat{\ffL}/\widehat{\ffC})\text{.} 
 \end{equation}
 
Composing \eqref{morph-mult} with \eqref{morph-r-sec} we get a morphism 
\[
\bbR \Sec_{{\Pic }} (\ffL/\ffC) \to \bbR \Sec_{{\Pic }} (\ffL(\delta\ffD)/\ffC)  {\to} \bbR \Sec_{{\widehat{\Pic} }} (\widehat{\ffL}/\widehat{\ffC}) \text{.}
 \]
 
By applying the same argument to $\ffL^{\oplus r+1}$, we deduce the desired morphism $\oc$  
\begin{equation}\label{oc_on_sec}
\xymatrix{
\bbR \Sec_{{\Pic }} (\ffL^{\oplus r+1}/\ffC) \ar[r]^\oc\ar[d] & \bbR \Sec_{{\widehat{\Pic} }} (\widehat{\ffL}^{\oplus r+1}/\widehat{\ffC}) \ar[d] \\
{{\Pic }} \ar[r]^{c} & {\widehat{\Pic} }\text{.}
}
\end{equation}
Now we are left to check that the restriction of $\oc$ to $\RGw$ takes image in $\RQm$. We can check this on points, let $(C, L, s_0, \dots, s_r)\in\RGw$. We need to see that stability conditions of stable maps on $(C, L, s_0, \dots, s_r)$ imply those of quasi-maps on $\oc(C, L, s_0, \dots, s_r)$. The conditions about the ampleness of the bundles, are already checked at the level of $c:\Pic\to\wPic$. We only need to show that if $(L, s_0, \dots, s_r)$ has no base points, then $(\wh{L}, \wh{s}_0, \dots, \wh{s}_r)$ has finitely many base points away from markings and nodes. Let $Q_i$ be the attaching nodes of the rational tail $T_i$ on $C$. The only base points that are acquired by applying $\oc$ are on the images of the $Q_i$s in $\wh{C}$, but these are smooth and unmarked points of $\wh{C}$.

Thus we have a well-defined map given by the restriction of \ref{oc_on_sec}, which we will still denote by the same name:
\[
\oc:\RGw\to\RQm\text{.}
\]

\end{proof}

\section{Local embeddings} \label{subsec:local,embeddings}
The idea of this section is to control the map $\oc$ locally. In this section, we keep $\oc$ for the restriction morphism.

For any point $\xi\in\RGw$ we construct 
\begin{enumerate}
\item an open $\bbR V\subset \RGw$ which is a neighbourhood of $\xi$, 
\item an open $\bbR\widehat{V}\subset\RQm$ which is a neighbourhood of $\wh{\xi}:=\oc(\xi)$, such that $\oc:\bbR V \to \bbR\wh{V}$,
\item smooth Artin stacks  $W$ and $\wh{W}$ with a morphism $q:W\to \wh{W}$ which is proper and birational,
\item a vector bundle $\widehat{F}$ on $\widehat{W}$ together with a section $\theta$ such that 
\begin{itemize}
    \item the homotopical zero locus of $\theta$ is $\bbR \wh{V}$,
    \item the homotopical zero locus of $q^*\theta$ is $\bbR V$.
\end{itemize}

\end{enumerate}

Let us sum up the situation in the following diagram, where each square is Cartesian.
\begin{equation}\label{diag:local,total}
\begin{tikzcd}
\bbR V \ar[rrd,phantom,"\ulcorner_h", very near start] \ar[rdd,phantom,"\ulcorner_h", very near start]\ar[ddd] \ar[rd,"\oc"'] \ar[rrr]& & & W \ar[ld,"q"']\ar[ddd,"0"] \ar[ddl,phantom,"\urcorner_h",  near start]\\
& \bbR \wh{V} \ar[rd,phantom,"\ulcorner_h", very near start]\ar[d] \ar[r]& \wh{W} \ar[d,"0"] & \\
& \wh{W} \ar[r,"\theta"] & F &\\
W \ar[rru,phantom,"\llcorner_h", near start]\ar[ru,"q"] \ar[rrr,"q^*\theta"] &  & & q^* F \ar[lu]
\end{tikzcd}
\end{equation}
Practically, we have that 
\[
\ccO_{\bbR \wh{V}} = \Kos(F,\theta) \mbox{ and } \ccO_{\bbR V} = \Kos(q^*F,q^*\theta)\text{.}
\]
Notice that the right and bottom squares are homotopically Cartesian by
\cite[{Lemma 08I6}]{stacks-project}.

First, we will construct a different collection of open sets: $\bbR U$, $\UU$ and $\bbR\wh U$, $\wh \UU$. We will have that $\UU$ (resp. $\wh{\UU}$) is smooth and $\bbR U$ (resp. $\bbR \wh U$) sits inside it as a derived vanishing locus. These are all modulis of sections with minor stability. Later, by imposing the full stability conditions for stable maps and quasi-maps respectively, we will obtain $\bbR V$, $W$ and $\bbR \wh V$, $\wh W$.

\subsection{Constructions}\label{sec:constructions}

Fix $\xi:=(C', L', s'_0,\dots , s'_r)\in\RGw$ a closed point and $\wh\xi:=\oc(\xi)=(\widehat{C}',\widehat{L}', \wh{s}'_0,\dots ,\wh{s}'_r)\in\RQm$.

For any point $(\wh C, \wh L, \wh s_0\dots , \wh s_r)\in\RQm$, let $BL(\wh{s})=\cap_{i=0}^r Z(\wh{s}_i)$ be the base locus of $(\wh{s}_0,\ldots , \wh{s}_r)$. Note that by the definition of $\RQm$, we have that $BL(\wh{s})\to \RQm$ is a finite morphism. 
By possibly restricting $\wPic$ we may assume that we have an ample section of $\widehat{\ffC}\to\wPic$. We call the corresponding divisor $\wh{A}$. After replacing $\widehat{A}$ with an appropriate multiple, we may assume that 
\begin{equation}\label{no r1hat}
\rder^1\widehat{\pi}_* \widehat{L}(\widehat{A})=0\text{.}
\end{equation}
We further restrict this neighbourhood to a neighborhood of $c(C,L)$ such that there exists a section of $\mathcal{O}(\wh{A})$ (i.e. $\widehat{A}$ is a divisor of degree $a$ on $\widehat{\ffC}\to\wPic$) such that
\begin{enumerate}
    \item $\wh{A}$ does not contain 1-dimensional fibers of $\widehat{\ffC}\to\wPic$ and 
    \item for all points $(\wh{C},\wh{L})$ in a neighborhood of $c(C',L')$, $\wh{A}$ is disjoint from $BL(\wh{s})$ and special points of $\wh C$.
\end{enumerate}  We call this neighborhood $\widehat{\ffU}\subset \wPic$.

 Let $\ffU$ be $c^{-1}(\widehat{\ffU})$. We put $A\coloneqq k^*\widehat{A}$. By the fact that $L$ has positive degree on rational tails (see \cref{ex:pic} for the definition of $\Pic$), we have
 \begin{equation}\label{no r1}
\rder^1\pi_* L(A)=0
\end{equation}
on $\ffU$\text{.} 
We define
\[
\begin{tikzcd}
 \bbR U\coloneqq \bbR \Sec_\ffU (\ffL^{\oplus r+1}_\ffU/\ffC_{\ffU})
& \UU\coloneqq\bbR\Sec_\ffU(\ffL_\ffU(A)^{\oplus r+1}/\ffC_{\ffU})\\
 \bbR\widehat{U}\coloneqq\bbR\Sec_{\widehat{\ffU}}(\widehat{\ffL}^{\oplus r+1}_{\widehat{\ffU}}/\widehat{\ffC}_{\widehat{\ffU}})
& \widehat{\UU}\coloneqq\bbR\Sec_{\widehat{\ffU}}(\widehat{\ffL}_{\widehat{\ffU}}^{\oplus r+1}(\widehat{A})/\ffC_{\widehat{\ffU}})\text{.}\\
\end{tikzcd}
\]
Note that $\bbR U$ and $\bbR\wh{U}$ are open in $\bbR\Sec_{\Pic}(\ffL^{\oplus r+1}/\ffC)$ and $\bbR\Sec_{\wPic}(\widehat{\ffL}^{\oplus r+1}/\widehat{\ffC})$ respectively by \cref{prop:restriction of sec}. Moreover, $\xi\in \bbR U$ and $c(\xi)\in\bbR\wh{U}$.
By (\ref{no r1hat}) and (\ref{no r1}) we see that $\UU$ and $\widehat{\UU}$ are smooth and have no derived structure.
The morphism of sheaves $\ccL_{\ffU}\to\ccL_{\ffU}(A)$ gives a morphism $\bbR U\to \UU$. Similarly, we have $\bbR\wh{U}\to\wh{\UU}$. As in Theorem \ref{prop:div,rational,tails} we have a morphism $\tilde{q}: \UU\to \wh{\UU}$

\begin{construction}[Labelling of base points]\label{rk: decomposition of Zw}

Let $\wh\zeta=(\wh{C}', \wh{L}',\wh{w}'_0 ,\dots,\wh {w}'_r)$ be the image of $\wh\xi$ in $\wh\UU$. Let $BL(\wh {s}')$ be the base locus of $(\wh{s}'_0,\dots ,\wh {s}'_r)$, and $BL(\wh{w}')$ be the base locus of $(\wh {w}'_0,\ldots ,\wh {w}'_r)$. 

Notice also that by construction, we have that the base locus $BL(\wh{w}')=BL(\wh{s}')\sqcup \wh{A}$. This follows because $BL(\wh{s}')$ and $\wh{A}$ are disjoint by construction and for each $i$, $\wh{w}'_i$ is obtained by multiplying $\wh{s}'_i$ by the local defining equation of $\wh{A}$.  Then that we have a labelling
\[
BL(\wh{{w}}')=\underbrace{\{\wh{w}'_0=\dots= \wh{w}'_r=0\}\cap \wh{A}} _{BL(\wh{{w}'})_{\wh A}}\sqcup \underbrace{\{\wh s'_0= \dots =\wh s'_r\}}_{BL(\wh{{w}}')_{\wh L}}\text{.}
\]

In the following we construct an analytic (or \'etale) neighbourhood of $\wh\zeta$ such that the labelling above can be extended on this neighbourhood. First, we restrict $\wh\UU$ to the open where the base locus $BL(\wh{w})$ does not contain 1-dimensional fibers of $\ffC\to \Pic$. On this open set we have that $BL(\wh{w}')\to \wh\UU$ is proper and finite and over $\wh\zeta$ we have that $BL(\wh{{w}}')_{\wh{A}}$ and $BL(\wh{{w}}')_{\wh{L}}$ lie on different connected components. Then, there exists an analytic (or \'etale) neighbourhood $\wh{\UU'}$ such that the restriction of $BL(\wh{ w})$ to $\UU'$ can be written as a union of disconnected components $BL(\wh{{w}})_{\wh A}$ and $BL(\wh{{w}})_{\wh L}$ which contain $BL(\wh{{w}}')_{\wh A}$ and $BL(\wh{{w}}')_{\wh L}$ respectively.
This means that for a point $(\wh C, \wh L, \wh w_0,\dots ,\wh w_r)\in \wh{\UU'}$ the base points $BL(\wh{{w}})$ of $(\wh L, \wh w_0,\dots ,\wh w_r)$ are labelled by the connected components of the base locus
\[
BL(\wh {{w}})=BL(\wh {{w}})_{\wh A}\sqcup BL(\wh{w})_{\wh L}\text{.}
\]

\end{construction}
Now we define $W\subset\UU'$ and $\wh{W}\subset\wh{\UU'}$ by imposing some stability conditions. 
\begin{construction}[Construction of $\wh{W}$]\label{construction W hat}
Let \[(\wh{C}, \wh{L}, \wh{w}_0,\dots , \wh{w}_r)\in \wh{\UU'}\subset\Sec_{\wh{\ffU}}(\wh{\ffL}_{\wh{\ffU}}(\wh{A})/\wh{\ffC}_{\wh{\ffU}})\text{.}\]  This point is in $\wh{W}$ if
\begin{enumerate}[label=(\roman*)]
\item the base locus of $\wh{w}_0,\dots , \wh{w}_r$ is discrete and disjoint from all the special points of $\wh{C}$,
\item for any $\epsilon\in\bbQ_{>0}$,
\[
\omega_{\wh{C}}^{\log}\otimes \wh{L}(\wh{A})^{\otimes \epsilon}>0\text{.}
\]
\end{enumerate}
Note that the base locus is labelled in the sense of construction \ref{rk: decomposition of Zw} because we are in $\wh{\UU '}$.
\end{construction}
\begin{remark}\label{affine w hat} We have that $\wh W$ is a smooth as it is an open in $\mathbb{V}(\rder\pi_{\mathfrak{U}\ast}\mathfrak{L_U}(A))$ and $\mathbb{V}(\rder\pi_{\mathfrak{U}\ast}\mathfrak{L_U}(A))$ is smooth as $\rder^{1}\pi_{\mathfrak{U}\ast}\mathfrak{L_U}(A)=0$. Then, $\wh W$ is a Deligne--Mumford (DM) stack, as all points have finite automorphism; this is because the conditions in  Construction   \ref{construction W} are a mixed between stable maps and quasi-maps conditions which are both DM stacks. By possibly shrinking $\wh W$ and \cite[Theorem 1.1]{Alper-Hall-Rudh-Luna-slice-2020} (see also \cite{Abra-Vistoli-compactifying-stable-maps-2002}, \cite{Kresch-geo-DM-2009}), we can also assume that $\wh{W}$ is the quotient of a smooth affine scheme by a finite group.

Again by construction, $\wh W$ is the stack quotient $[Y/G]$ with $Y$ smooth and affine (i.e., $Y=\Spec A$) and $G$ a finite group. As this group $G$ is inside a linear group, then its coarse moduli space is $\Spec A^G$. We have that $\Spec A^G$ is an algebra with finite generators hence it is quasi-projective.
\end{remark}

\begin{construction}[Construction of $W$]\label{construction W}
Let $\UU'=\wh{\UU'}\times_{\wh{\UU}}\UU$. We have that $\UU'$ contains the point $\zeta$, the image of $\xi$ in $\UU$. By construction, we have a map $\mathfrak{q}:\UU'\to \wh{\UU'}$ and an induced map between universal curves $\mathfrak{k}$. If for any point $(C,L, w)\in \UU'$, we denote its image under $\mathfrak{q}$ by $(\wh C, \wh L, \wh w)$, then we have that $\mathfrak{k}$ maps $BL(w)$ to $BL(\wh w)$. Since the base locus in $\wh{\UU'}$ is labelled, we have that the base locus in ${\UU'}$ is labelled: 
\[BL(w)=BL(w)_{A}\sqcup  BL(w)_{L}.
\]
Let \[(C, L, w_0,\dots ,w_r)\in \UU'\subset\Sec_{\ffU}(\ffL_{\ffU}(A)/\ffC_{\ffU})\text{.}\]  This point is in $W$ if
\begin{enumerate}[label=(\roman*)]
\item the base locus $BL(w)$ of $w_0, \dots , w_r$ is discrete and disjoint from the special points of $C$, 

\item the subset $BL(w)_{L}$ of the base locus is empty and
\item the line bundle $\omega_{C}^{\log}\otimes L^{\otimes 3}$ is ample.
\end{enumerate}
Notice that by the definition of $\UU'$ we have that the base locus is labelled and thus condition (ii) makes sense.
\end{construction}

\begin{remark}
Notice that for any $(C, L, w_0,\dots ,w_r)\in W$, the choice of $\wh{A}$ and the stability condition imply that $A\mid_{C}$ does not intersect rational tails for any $C$. 

This was not the case for points in $\UU'$ without the stability condition in (i) in Construction \ref{construction W}.
\end{remark}

We define $\bbR V=\bbR U\times_{\UU}W$ and $\bbR\widehat{V}=\bbR\widehat{U}\times_{\wh\UU}\wh{W}$.

\begin{remark}
    The idea behind the construction is to define compatible atlases on $\RGw$ and $\RQm$, in the sense that we want open covers by $\bbR V$ and $\bbR \wh{V}$ respectively such that
    \[\begin{tikzcd}
        \bbR V\ar[d]\ar[r]\ar[dr, phantom, very near start, "\ulcorner^h"]&\RGw\ar[d,"c"]\\
        \bbR\wh{V}\ar[r]& \RQm \text{.}
    \end{tikzcd}
    \]
    In addition, we want $\bbR V$ and $\bbR\wh{V}$ to be derived vanishing loci of triples $(W,F,\theta)$ and $(\wh{W},\wh{F},\wh{\theta})$ where the first family is a pullback of the second. These are triples of a smooth algebraic (non-derived) stack, a vector bundle and a section.
    To achieve this, we start by covering $\RGw$ and $\RQm$ by sets open in $\SecM$ and $\SecQ$. These sets will be of the form $\bbR U=\bbR\Sec_{\ffU}(\ffL^{\oplus r+1}/\ffC_{\ffU})$, $\bbR \wh{U}=\bbR\Sec_{\wh{\ffU}}(\wL^{\oplus r+1}/\wC_{\wh{\ffU}})$. They are chosen so that it is possible to pick sufficiently ample divisors on $\wh{A}$ and $A$ on $\wC_{\wh{\ffU}}$ and $\ffC_{\ffU}$ which are away from the rational tails and base points and give $\bbR U$ and $\bbR\wh{U}$ smooth embeddings (see \cref{prop:UandUU} and \cref{lem:local} for more details). 
    
    We end up with a closed embedding $m_A: \bbR U\to\UU=\bbR\Sec_{\ffU}(\ffL(A)^{\oplus r+1}/\ffC_{\ffU})$ and a similar one for $\bbR\wh{U}$. Here, $m_A(\bbR U)$ is the space of $(r+1)$-tuples of sections of $\ffL(A)$ which are all divisible by the local equation of $A$.
    Now could define $\bbR\wh{V}=\bbR \wh{U}\cap\RQm$ and $\bbR V=\bbR U\cap\RGw$, but we have found that more care is needed at this stage. 
    
    The problem is finding an open $W\subset \UU$ such that $W\cap m_A(\bbR U)=\bbR V$, and similarly for the contracted ``hat-versions''. We cannot specify ``stability conditions'' on $\UU$ that will restrict to those of stable maps when restricted to the subvariety $\bbR U$, if we also want to do this construction in a parallel and compatible way on the hat-versions. Thus, we restrict to an analytic open set around $m_A(\bbR U)$, defined by $(r+1)$-tuples of sections of $\ffL(A)$ which are all divisible by small deformations of the local equations of $A$ (See Figure \ref{fig:rem,construction,W} for the intuition). Restricting to such an analytic open subset allows us to impose the condition $\UU$ that the $(r+1)$-tuple of sections of $\ffL(A)$ has no base points, apart from those very close to $A$. This is the reason we pass to the \'{e}tale topology in the construction. 

\begin{figure}[ht]\label{fig:rem,construction,W}
    \centering
\begin{tikzpicture}[scale=2]

    \filldraw[blue!90!green,fill opacity =0.1]  (0.3,0) -- (2.7,0) -- (3.5,1) -- (1.1,1) -- (0.3,0);
    \draw[dashed] (0,0.7) -- (3,0.7) -- (4,1.7) -- (1,1.7) -- (0,0.7);
    \draw[dashed] (0,-0.7) -- (3,-0.7) -- (4,0.3) -- (1,0.3) -- (0,-0.7);
    \draw (2,0.45) node {\textbullet};   
    \draw[blue!90!green,fill opacity =1] (0.7,0.1) node {$\bbR U$};
    \draw[blue!80!green,fill opacity =1] (2,0.1) node {$\bbR V$};
    \draw[red,fill opacity =1] (2,1.2) node {$W$};
    \draw[black,fill opacity =1] (-0.3,0.2) node {$\UU$};

\filldraw[blue!80!green,fill opacity =0.3] (2,0.45) ellipse (0.5 and 0.2);
\draw[dashed,red] (2,-0.25) ellipse (0.8 and 0.2);
\draw[dashed,red] (2,1.25) ellipse (0.8 and 0.2);

\draw[dashed] (0,0.7) .. controls (0.4,0)  .. (0,-0.7);
\draw[dashed] (3,0.7) .. controls (2.6,0)  .. (3,-0.7);
\draw[dashed] (4,1.7) .. controls (3.35,1)  .. (4,0.3);
\draw[dashed] (1,1.7) .. controls (1.15,1)  .. (1,0.3);

\draw[red] (1.2,-0.25) .. controls (1.6,0.45)  .. (1.2,1.25);
\draw[red] (2.8,-0.25) .. controls (2.4,0.45)  .. (2.8,1.25);


 \end{tikzpicture}
    \caption{The ambient space is $\UU$ which is an open in the moduli of sections of $\ffL(A)$, $\bbR U$ is an open in the moduli of sections of $\ffL$, $\bbR V$ is $\bbR U\cap \RGw$.  
    We draw this picture for stable maps (without hat) but we should imagine the same for quasi-maps in a compatible way.
    }
    \label{fig:my_label}
\end{figure}
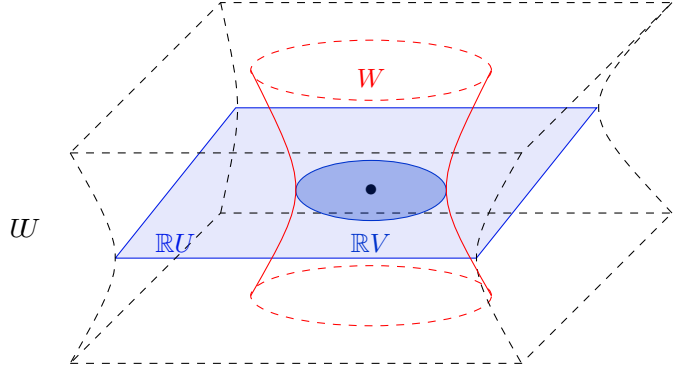

\end{remark}

\subsection{Properties}
\begin{proposition}\label{prop:UandUU}
There exists a vector bundle $E$ on $\UU$ and a section $\sigma$ such that $\bbR U$ is the derived zero locus of $\sigma$ that is $\ccO_{\bbR U}= \Kos(\ccE,\sigma)$ where $\ccE$ is the sheaf of sections of $E$.

Similarly, there exists a vector bundle $\wh{E}$ on $\wh{\UU}$ and a section ${\wh{\sigma}}$ such that $\ccO_{\bbR \wh{U}}=\Kos(\wh{\ccE},\wh{\sigma})$.
\end{proposition}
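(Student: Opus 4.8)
The plan is to realise $\bbR U$ as the derived vanishing locus of a section of a vector bundle on the \emph{smooth} stack $\UU$, using the divisor sequence of $A$ to compare the two relative moduli of sections, and then to read off the Koszul presentation of $\ccO_{\bbR U}$ directly from \cref{prop:moduli_section_vector_bundle}. It suffices to treat $\bbR U\subset\UU$; the case $\bbR\widehat{U}\subset\widehat{\UU}$ is obtained verbatim by substituting $(\widehat{\ffU},\widehat{A})$ for $(\ffU,A)$ and \eqref{no r1hat} for \eqref{no r1}. Throughout, $\pi_{\ffU}\colon\ffC_{\ffU}\to\ffU$ denotes the universal curve and $\ccL_{\ffU}$ the restriction of the universal line bundle.

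First I would set up a two-term model. Since $\widehat{A}$, hence $A=k^{*}\widehat{A}$, is a relative effective Cartier divisor on $\ffC_{\ffU}\to\ffU$ meeting no one-dimensional fibre, the ideal sequence
\[
0\to\ccL_{\ffU}\to\ccL_{\ffU}(A)\to\ccL_{\ffU}(A)|_{A}\to0
\]
stays exact on every fibre, so $A_{\ffU}\to\ffU$ is finite flat of some degree $a$ and $\ccE_1\coloneqq\pi_{\ffU *}\bigl(\ccL_{\ffU}(A)|_{A}\bigr)^{\oplus r+1}$ is a vector bundle of rank $a(r+1)$. Taking $r+1$ copies of this sequence and applying $\rder\pi_{\ffU *}$, using finiteness of $A_{\ffU}/\ffU$ together with the vanishing \eqref{no r1} — which also makes $\ccE_0\coloneqq\pi_{\ffU *}\ccL_{\ffU}(A)^{\oplus r+1}$ a vector bundle, compatible with base change — I obtain a distinguished triangle
\[
\rder\pi_{\ffU *}\ccL_{\ffU}^{\oplus r+1}\longrightarrow\ccE_0\xrightarrow{\phi}\ccE_1\longrightarrow\rder\pi_{\ffU *}\ccL_{\ffU}^{\oplus r+1}[1]
\]
with $\phi$ the pushforward of restriction to $A$. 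Hence $\rder\pi_{\ffU *}\ccL_{\ffU}^{\oplus r+1}\simeq[\ccE_0\xrightarrow{\phi}\ccE_1]$ in degrees $[0,1]$, with $H^0=\pi_{\ffU *}\ccL_{\ffU}^{\oplus r+1}$ and $H^1=\rder^1\pi_{\ffU *}\ccL_{\ffU}^{\oplus r+1}$ the expected (co)kernels of $\phi$.

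Then I would feed this into \cref{prop:moduli_section_vector_bundle}: it identifies $\UU=\bbR\Spec_{\ffU}\Sym(\ccE_0^{\vee})=\bbV(\ccE_0)$, with no derived structure by \eqref{no r1} and structural map $p\colon\UU\to\ffU$, while
\[
\bbR U=\bbR\Spec_{\ffU}\Sym\bigl((\rder\pi_{\ffU *}\ccL_{\ffU}^{\oplus r+1})^{\vee}\bigr)=\bbR\Spec_{\ffU}\Sym\bigl([\ccE_1^{\vee}\xrightarrow{\phi^{\vee}}\ccE_0^{\vee}]\bigr),
\]
the last complex lying in degrees $[-1,0]$. Unwinding the free commutative differential graded algebra, over $\UU$ this becomes the exterior algebra $\Lambda^{\bullet}_{\ccO_{\UU}}(p^{*}\ccE_1^{\vee})$ (with $p^{*}\ccE_1^{\vee}$ in cohomological degree $-1$), whose differential on generators is the composite $p^{*}\ccE_1^{\vee}\xrightarrow{p^{*}\phi^{\vee}}p^{*}\ccE_0^{\vee}\xrightarrow{\tau}\ccO_{\UU}$, where $\tau$ is the tautological map of $\UU=\bbV(\ccE_0)$. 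This is precisely the Koszul complex $\Kos(\ccE,\sigma)$, where $E$ is the vector bundle on $\UU$ with sheaf of sections $\ccE\coloneqq p^{*}\ccE_1$ and $\sigma\in\Gamma(\UU,E)=\Hom_{\ccO_{\UU}}(p^{*}\ccE_1^{\vee},\ccO_{\UU})$ is $\tau\circ p^{*}\phi^{\vee}$ — equivalently, $\sigma$ is the image under $p^{*}\phi$ of the tautological section of $p^{*}\ccE_0$, i.e. it restricts the universal $(r+1)$-tuple of sections of $\ccL_{\ffU}(A)$ to $A$ (pulled back to the universal curve over $\UU$) and pushes it forward along the finite flat map to $\UU$. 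On truncations, $t_0Z^{h}(\sigma)$ is the locus where these sections vanish on $A$, which is $\Sec_{\ffU}(\ffL_{\ffU}^{\oplus r+1}/\ffC_{\ffU})=t_0(\bbR U)$; the cdga computation upgrades this to $\ccO_{\bbR U}=\Kos(\ccE,\sigma)$. The hatted statement follows identically, and the proposition is proved.

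I expect the crux to be the middle step: exhibiting a two-term model of $\rder\pi_{\ffU *}\ccL_{\ffU}^{\oplus r+1}$ \emph{whose degree-zero term is exactly} $\pi_{\ffU *}\ccL_{\ffU}(A)^{\oplus r+1}$ — this is what pins $\bbR U$ inside the ambient smooth stack $\UU$ rather than inside some unrelated vector bundle — and then matching $\Sym$ of its dual with the Koszul complex of an honest section (getting the variances, the degree shift, and the identification of the differential with the tautological/restriction map right). The flatness of $A_{\ffU}/\ffU$, needed only so that $\ccE_1$ is locally free, is a minor point handled by the fibrewise exactness of the ideal sequence, shrinking $\ffU$ if necessary.
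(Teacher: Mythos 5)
Your proof is correct and uses essentially the same argument as the paper: both start from the divisor sequence for $A$, exploit the vanishing \eqref{no r1} to recognise $\UU=\bbV(\pi_{\ffU*}\ccL_\ffU(A)^{\oplus r+1})$ and $\ffE=\bbV(\pi_{\ffU*}(\ccL_\ffU(A)|_A)^{\oplus r+1})$ as honest (non-derived) vector bundles, and realise $\bbR U$ as the derived zero locus in $\UU$ of the section induced by restriction to $A$. The paper reaches the Koszul description through a chain of homotopy-cartesian squares of total spaces over $\ffU$ and $\UU$, while you unwind $\Sym$ of the two-term presentation $[\ccE_0\to\ccE_1]$ directly into the Koszul cdga; this is the same computation phrased algebraically rather than geometrically.
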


\begin{proof}
Recall from \eqref{no r1} and \eqref{ses u}  that we have $\rder^1\pi_{\ffU *}\ccL_\ffU (A)=\rder^1\wh{\pi}_{\ffU *}\wh{\ccL}_\ffU(\wh{A})=0$.
Multiplying by a local equation of $A$ and pushing forward gives a distinguished triangle of sheaves on $\ffU$.
\begin{equation}\label{ses u} \rder\pi_{\ffU *}\ccL_{\ffU}\to \rder\pi_{\ffU *}\ccL_{\ffU}(A)\xrightarrow{s} \rder\pi_{\ffU *}\ccL_{\ffU}(A)|_{A}\xrightarrow{+1}
\end{equation}
Observe that $\bbR U=\bbV( \rder\pi_{\ffU *}\ffL_{\ffU})$, and 
\[\UU=\bbV(\rder\pi_{\ffU *}\ffL_{\ffU}(A))=\bbV(\pi_{\ffU *}\ffL_{\ffU}(A))\text{.}
\]
We also have $ \rder^{1}\pi_{\ffU *}\ffL_{\ffU}(A)|_{A}= 0$, forced by the long exact sequence of (\ref{no r1}) and (\ref{ses u}). Then $\ffE\coloneqq \bbV(\pi_{\ffU *}\ffL_{\ffU}(A)|_{A})$ is a non-derived vector bundle on $\ffU$. 
The distinguished triangle in (\ref{ses u}) can be thus written as a fibered and cofibered diagram of derived complexes
\[
\begin{tikzcd}
\rder\pi_{\ffU *}\ffL_{\ffU} \arrow[d] \arrow[r] \arrow[rd,phantom,"\ulcorner",very near start]& \pi_{\ffU *}\ffL_{\ffU}(A)\arrow[d,"s"]\\
0 \arrow[r] & \pi_{\ffU *}\ffL_{\ffU}(A)|_{A} \text{.}\arrow[ul,phantom,"\lrcorner", very near start]
\end{tikzcd}
\]

Taking the total space $\bbV(-)=\bbR\Spec_{\ffU}\Sym^{\bullet}(-)^{\vee}$ functor gives us a homotopical fibered product 
\begin{equation}\label{eq:ffE}
\begin{tikzcd}
\bbR U\ar[r]\ar[d] \arrow[rd,phantom,"\ulcorner_h",very near start]& \UU\ar[d,"s"]\\
\ffU\ar[r,"0"] & \ffE\text{.}
\end{tikzcd}
\end{equation}
Let $E$ be the pullback of the bundle $\ffE$ by the projection $\UU\to\ffU$, and $\sigma$ be the section induced by $s$. We claim that the homotopical fibered square above implies that the square below is also homotopically fibered
\begin{equation}\label{eq:E}
\xymatrix{
\bbR U\ar[r]\ar[d]& \UU\ar[d]^{\sigma}\\
\UU\ar[r]^{0} & E\text{.}
}
\end{equation}
To see this, consider
\begin{equation}\label{eq:zeroes} 
\xymatrix{
\UU\ar[r]^{0}\ar[d]& E\ar[d]\\
\ffU\ar[r]^{0}& \ffE
} 
\end{equation}
which is obviously fibered. Stacking (\ref{eq:E}) and (\ref{eq:zeroes}) yields (\ref{eq:ffE}). Since (\ref{eq:zeroes}) and (\ref{eq:ffE}) are homotopical fibered products, (\ref{eq:E}) must also be a homotopical fibered product. 

The second part of the statement is proved in the same way, with $\wh{E}$ and $\wh{\sigma}$ coming from the following triangle over $\wh{\ffU}$:
\begin{equation}\label{ses u hat}
\rder\wh{\pi}_{\wh{\ffU} *}\wh{\ccL}_{\wh{\ffU}}\to \wh{\pi}_{\wh{\ffU} *}\wh{\ccL}_{\wh{\ffU}}(\wh{A})\xrightarrow{\wh{s}} \wh{\pi}_{\wh{\ffU} *}\wh{\ccL}_{\wh{\ffU}}(\wh{A})|_{\wh{A}}\xrightarrow{+1}\text{.}
\end{equation}
\end{proof}
The contraction $c:\Pic\to\wPic$ restricts to $c:\ffU\to\wh{\ffU}$ and induces maps $\widetilde{c}:\bbR U\to\bbR\wh{U}$ and $\widetilde{q}:\UU\to\wh{\UU}$ by the same construction as \ref{subsubsection:L,to,whL}, \ref{prop:div,rational,tails}. All these maps are in particular birational.

\begin{lemma}\label{lem:local}We have a homotopically cartesian diagram
\begin{equation}\label{local diag}
\begin{tikzcd}
\bbR U\ar[r,"i"]\ar[d,"\widetilde{c}"] \arrow[rd,phantom,"\ulcorner_h",very near start]&\UU\ar[d,"\widetilde{q}"]\\
\bbR\widehat{U}\ar[r,"\widehat{i}"]&\widehat{\UU}\text{.}
\end{tikzcd}
\end{equation}
\end{lemma}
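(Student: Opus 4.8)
Here is a plan for proving \cref{lem:local}.

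The plan is to reduce the assertion, after base change along $c\colon\ffU\to\wh\ffU$, to a single commutative square of line bundles on $\ffC_\ffU$, and to observe that this square is a homotopy pushout square precisely because the divisor $A$ is disjoint from the rational tails; applying to it the exact functor $\rder\pi_{\ffU*}$ and the limit-preserving functor $\bbV_\ffU(-)=\bbR\Spec_\ffU\Sym(-)^\vee$ then yields the desired homotopy cartesian square.

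First I would rewrite the four corners. By Proposition~\ref{prop:moduli_section_vector_bundle}, one has $\bbR U=\bbV_\ffU(\rder\pi_{\ffU*}\ccL_\ffU^{\oplus r+1})$ and $\UU=\bbV_\ffU(\rder\pi_{\ffU*}\ccL_\ffU(A)^{\oplus r+1})$, the second with no derived structure by \eqref{no r1}. For the hatted corners I would use the factorisation $k=\ell\circ\kappa$ of \eqref{diag:big}, restricted over $\ffU$, with $\rho=c^*\wh\pi_{\wh\ffU}\colon c^*\wh\ffC_{\wh\ffU}\to\ffU$ and $\pi_\ffU=\rho\circ\kappa$: Claim~\ref{claim:pullback,line} gives $\kappa_*\ccL_\ffU(\delta\ffD)\simeq\ell^*\wh\ccL_{\wh\ffU}$ with $\rder^1\kappa_*\ccL_\ffU(\delta\ffD)=0$, and the projection formula together with $A=k^*\wh A$ upgrades this to $\kappa_*\ccL_\ffU(A+\delta\ffD)\simeq\ell^*\wh\ccL_{\wh\ffU}(\wh A)$, again with $\rder^1\kappa_*=0$; combined with derived base change $c^*\rder\wh\pi_{\wh\ffU*}\simeq\rder\rho_*\ell^*$ for the Cartesian square $(\rho,\ell,\wh\pi_{\wh\ffU},c)$, this gives $c^*\bbR\wh U\simeq\bbV_\ffU(\rder\pi_{\ffU*}\ccL_\ffU(\delta\ffD)^{\oplus r+1})$ and $c^*\wh\UU\simeq\bbV_\ffU(\rder\pi_{\ffU*}\ccL_\ffU(A+\delta\ffD)^{\oplus r+1})$. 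Since $\widetilde q$ factors through $c^*\wh\UU$, we have $\UU\times^h_{\wh\UU}\bbR\wh U\simeq\UU\times^h_{c^*\wh\UU}c^*\bbR\wh U$, so it is enough to show that the base-changed square over $\ffU$, with corners $\bbR U$, $\UU$, $c^*\bbR\wh U$, $c^*\wh\UU$, is homotopy cartesian. Unwinding the constructions of Theorem~\ref{prop:div,rational,tails} for $\widetilde q$ and $\widetilde c$, and the definition of $i$ and $\wh i$ as multiplication by the canonical sections of $A$ and $\wh A$, each of its four maps is $\bbV_\ffU\circ\rder\pi_{\ffU*}$ applied to one of the twist inclusions in the commutative square of sheaves on $\ffC_\ffU$
\[
\begin{tikzcd}
\ccL_\ffU^{\oplus r+1}\ar[r,hook]\ar[d,hook]&\ccL_\ffU(A)^{\oplus r+1}\ar[d,hook]\\
\ccL_\ffU(\delta\ffD)^{\oplus r+1}\ar[r,hook]&\ccL_\ffU(A+\delta\ffD)^{\oplus r+1}.
\end{tikzcd}
\]

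The crux is that this square of sheaves is a homotopy pushout (equivalently, homotopy pullback) square in the derived category $\ccD(\ffC_\ffU)$: each horizontal map is injective with cokernel supported on $A$, namely $(\ccL_\ffU(A)|_A)^{\oplus r+1}$ and $(\ccL_\ffU(A+\delta\ffD)|_A)^{\oplus r+1}$ respectively, and the induced map between these cokernels is an isomorphism because $A=k^*\wh A$ is disjoint from the rational tails, that is, from the support of $\delta\ffD$, so that $\ccO_{\ffC_\ffU}(\delta\ffD)|_A\simeq\ccO_A$. (Equivalently, as subsheaves of $\ccL_\ffU(A+\delta\ffD)$ one has $\ccL_\ffU(A)\cap\ccL_\ffU(\delta\ffD)=\ccL_\ffU$ and $\ccL_\ffU(A)+\ccL_\ffU(\delta\ffD)=\ccL_\ffU(A+\delta\ffD)$.) This disjointness is built into the choice of $\wh A$ in the construction of $\wh\ffU$, where $\wh A$ is taken away from the special points of the curves and from the images of the contracted tails. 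Since $\rder\pi_{\ffU*}$ is exact it carries this pushout square of perfect complexes on $\ffC_\ffU$ to a homotopy pullback square of perfect complexes on $\ffU$; and $\bbV_\ffU=\bbR\Spec_\ffU\Sym(-)^\vee$ --- a composite of the contravariant dualisation, the colimit-preserving functor $\Sym$, and the equivalence $\bbR\Spec_\ffU$ --- preserves limits, so it turns this into a homotopy cartesian square of derived stacks over $\ffU$. Post-composing the bottom row with the contractions $c^*\bbR\wh U\to\bbR\wh U$ and $c^*\wh\UU\to\wh\UU$ then recovers the square of \cref{lem:local}, which is therefore homotopy cartesian.

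The step I expect to be the main obstacle is the bookkeeping in the second paragraph: checking that $i$, $\wh i$, $\widetilde q$ and $\widetilde c$ --- which are defined through the Weil restriction and the tail-contraction morphism of Theorem~\ref{prop:div,rational,tails} --- genuinely become $\bbV_\ffU\rder\pi_{\ffU*}$ of the four evident twist inclusions, which requires feeding in Claim~\ref{claim:pullback,line}, the vanishing of $\rder^1\kappa_*\ccL_\ffU(\delta\ffD)$, base change along $c$, and keeping the several projection-formula identities straight. The geometric content, by contrast, is the one-line observation that $A$ misses the tails, which is exactly what makes the square of sheaves a pushout.
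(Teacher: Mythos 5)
Your proof is correct and captures the same mathematical content as the paper's own argument: both hinge on identifying the horizontal cofiber $\pi_{\ffU*}\ccL_{\ffU}(A)|_{A}$ with $c^{*}\wh{\pi}_{\wh{\ffU}*}\wh{\ccL}_{\wh{\ffU}}(\wh{A})|_{\wh{A}}$, which holds precisely because $A$ is disjoint from the rational tails $\ffD$. The paper packages this through Proposition~\ref{prop:UandUU}, exhibiting $\bbR U$ and $\bbR\wh{U}$ as derived zero loci of sections $\sigma$, $\wh{\sigma}$ of bundles over the smooth opens $\UU$, $\wh{\UU}$, and then proving $\widetilde{q}^{*}(\wh{E},\wh{\sigma})=(E,\sigma)$ by an explicit term-by-term comparison of the triangles \eqref{ses u} and $c^{*}$\eqref{ses u hat}; you bypass that detour and apply $\bbV\circ\rder\pi_{\ffU*}$ directly to the pushout square of twist inclusions on $\ffC_{\ffU}$, which streamlines the bookkeeping while relying on exactly the same inputs (Claim~\ref{claim:pullback,line}, the vanishing $\rder^{1}\kappa_{*}=0$, base change along $c$, and $A\cap\ffD=\emptyset$).
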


\begin{proof}
By \cref{prop:UandUU}, it suffices to show that $E=\widetilde{q}^*\wh{E}$ and $\sigma=\widetilde{q}^*\wh{\sigma}$. 
Recall that $(E,\sigma)$ is defined by the following diagram coming from \eqref{ses u}.
\[
\begin{tikzcd}
E\ar[r]\ar[d]\ar[dr, phantom, "\ulcorner", very near start]& \Spec_{\ffU}\Sym(\pi_{\ffU *}\ccL_{\ffU}(A)|_{A})\ar[d]\\
\UU\ar[r]\ar[ur, "s"]\ar[u, "\sigma", bend left]& \ffU.
\end{tikzcd}
\]
Similarly $(\wh{E}, \wh{\sigma})$ is defined by the diagram below coming from \eqref{ses u hat}.
\[
\begin{tikzcd}
\wh{E}\ar[r]\ar[d]\ar[dr, phantom, "\ulcorner", very near start]& \Spec_{\wh{\ffU}}\Sym(\wh{\pi}_{\wh{\ffU} *}\wh{\ccL}_{\wh{\ffU}}(\wh{A})|_{\wh{A}})\ar[d]\\
\wh{\UU}\ar[r]\ar[ur, "\wh{s}"]\ar[u, "\wh{\sigma}", bend left]& \wh{\ffU}.
\end{tikzcd}
\]
For $c:\ffU\to\wh{\ffU}$ the usual contraction, we need to show that $c^*\wh{\pi}_{\wh{\ffU} *}\wh{\ccL}_{\wh{\ffU}}(\wh{A})|_{\wh{A}}\cong \pi_{\ffU *}\ccL_{\ffU}(A)|_{A}$ and that $\wh{s}\circ\widetilde{q}=s$.

To see these, start with the triangle \eqref{ses u hat} defining  $\wh{E}$ and $\wh{\sigma}$.
\[
\begin{tikzcd}
c^*\rder\wh{\pi}_{\wh{\ffU} *}\wh{\ccL}_{\wh{\ffU}}\arrow[r] \arrow[d,"\cong"]& c^*\wh{\pi}_{\wh{\ffU} *}\wh{\ccL}_{\wh{\ffU}}(\wh{A})\arrow[r,"c^*\wh{s}"] \arrow[d,"\cong"]& c^*\wh{\pi}_{\wh{\ffU} *}\wh{\ccL}_{\wh{\ffU}}(\wh{A})|_{\wh{A}}\arrow[r, "+1"]\arrow[d,"\cong"] &\  \\
 \rder{\pi}_{{\ffU} *}(k^*\wh{\ccL}_{\ffU})\arrow[r] \arrow[d, "\cong"]& {\pi}_{{\ffU} *}\left((k^*\wh{\ccL}_{\ffU})\otimes\ccO_{\ffU}({A})\right)\arrow[r]\arrow[d, "\cong"] & ({\pi}_{{\ffU} *}\left((k^* \wh{\ccL}_{\ffU})\otimes\ccO_{\ffU}({A})\right)|_{{A}} \arrow[r, "+1"] \arrow[d, "\cong"]& \ \\
 \rder{\pi}_{{\ffU} *}(\ccL_{\ffU}(\delta\ffD) )\ar[r] & {\pi}_{{\ffU} *}({\ccL_{\ffU}(\delta\ffD+A)}) \arrow[r] & \pi_{\ffU *}(\ccL_{\ffU}(A))|_{A}\ar[r, "+1"]& \ \\
 \end{tikzcd}
\]
The first set of vertical isomorphisms are by cohomology and base-change and the fact that $k^*\wh{A}=A$. The following are given by $k^*\wh{\ccL}=\kappa^*\ell^*\wh{\ccL}=\kappa^*\kappa_*\ffL(\delta \ffD)\simeq\ccL(\delta\ffD)$ (see \ref{subsub:morphism,between,secL,secLhat}).

By the requirements of our construction, $A$ does not meet $\ffD$. Then for the last term we have $\pi_{\ffU *}(\ccL_{\ffU}(A))|_{A}\cong \pi_{\ffU *}(\ccL_{\ffU}(\delta\ffD + A))|_{A}$. We conclude that $c^*$\eqref{ses u hat} is isomorphic to the following triangle:
\begin{equation}\label{c^* ses u hat}
 \rder{\pi}_{{\ffU} *}(\ccL_{\ffU}(\delta\ffD) )\to {\pi}_{{\ffU} *}({\ccL_{\ffU}(\delta\ffD+A)})\xrightarrow{c^*\wh{s}} \pi_{\ffU *}(\ccL_{\ffU}(\delta\ffD+A))|_{A}\xrightarrow{+1}\text{.}
\end{equation}
Now we compare $c^*$\eqref{ses u hat} =\eqref{c^* ses u hat} to  \eqref{ses u}. Twisting by $\delta\ffD$ induces a map between them
\[
\begin{tikzcd}
 \rder\pi_{\ffU *}\ccL_{\ffU}\arrow[r] \arrow[d] & \pi_{\ffU *}\ccL_{\ffU}(A)\arrow[r, "s"] \arrow[d, "f"] & \pi_{\ffU *}\ccL_{\ffU}(A)|_{A}\arrow[r, "+1"] \arrow[d, "\cong"] & \ \\
 \rder{\pi}_{{\ffU} *}(\ccL_{\ffU}(\delta\ffD) )\ar[r]\ar[d] & {\pi}_{{\ffU} *}({\ccL_{\ffU}(\delta\ffD+A)}) \arrow[r, "c^*\wh{s}"]\ar[d] & \pi_{\ffU *}(\ccL_{\ffU}(\delta\ffD+A))|_{A}\ar[r, "+1"]\ar[d] & \ \\
  \rder{\pi}_{{\ffU} *}(\ccL_{\ffU}(\delta\ffD) )|_{\delta\ffD}\ar[r, "\cong"] & {\pi}_{{\ffU}*}({\ccL_{\ffU}(\delta\ffD+A)})|_{\delta\ffD} \arrow[r] & 0\text{.} & \ \\
\end{tikzcd}
\]
The vertical map $f$ above is as follows:
\[
\widetilde{q}: \UU\xrightarrow{f} c^*\wh{\UU}\to \wh{\UU}\text{.}
\]
This shows that $c^*\wh{\pi}_{\wh{\ffU} *}\wh{\ccL}_{\wh{\ffU}}(\wh{A})|_{\wh{A}}\cong \pi_{\ffU *}\ccL_{\ffU}(A)|_{A}$ and that $\wh{s}\circ\widetilde{q}=s$, completing the proof.

\end{proof}
Now we have opens $W\subset\UU'\subset\UU$ and $\wh{W}\subset\wh{\UU'}\subset\wh{\UU}$. From the definitions of $W$ and $\wh{W}$ we see that $\widetilde{q}$ restricts to a map $q:W\to\wh{W}$.
\begin{lemma}\label{lemma:WandUU}
We have a commutative diagram
\[
\begin{tikzcd}
W\ar[r]\ar[d,"q"]& \UU\ar[d,"\widetilde{q}"]\\
\wh{W}\ar[r] & \wh{\UU}\text{.}
\end{tikzcd}
\]

\end{lemma}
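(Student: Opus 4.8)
The plan is to observe that in the square of the lemma the horizontal arrows are the open immersions $W\hookrightarrow\UU$ and $\wh{W}\hookrightarrow\wh{\UU}$ of Constructions \ref{construction W} and \ref{construction W hat}, while the right-hand vertical arrow is the morphism $\widetilde q\colon\UU\to\wh{\UU}$ produced exactly as in \ref{subsubsection:L,to,whL} and Theorem \ref{prop:div,rational,tails} with $\ffU,\wh{\ffU}$ in the roles of $\Pic,\wPic$, and that by construction the left-hand vertical arrow $q\colon W\to\wh{W}$ is nothing but the (co)restriction of $\widetilde q$ to those opens. Hence commutativity of the square is automatic once one knows that $\widetilde q$ does send $W$ into $\wh{W}$, so the whole content of the proof is the inclusion $\widetilde q(W)\subseteq\wh{W}$, which I would check on geometric points, in close parallel with \ref{subsubsection:Pic,to,whPic} and with the last paragraph of the proof of Theorem \ref{prop:div,rational,tails}.

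To do this, fix $(C,L,w_0,\dots,w_r)\in W\subseteq\UU'$ and let $(\wh{C},\wh{L},\wh{w}_0,\dots,\wh{w}_r)=\widetilde q(C,L,w_0,\dots,w_r)$, where $\wh{C}$ is $C$ with its rational tails contracted and $\wh{L}(\wh{A})$ is obtained from $L(A)$ by the twist of \ref{subsubsection:L,to,whL}. Because the point lies in $\wh{\UU'}$, the base locus $BL(\wh{w})$ comes already labelled as $BL(\wh{w})_{\wh{A}}\sqcup BL(\wh{w})_{\wh{L}}$ by Construction \ref{rk: decomposition of Zw}, so it remains to verify conditions (i) and (ii) of Construction \ref{construction W hat}. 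Condition (ii) is the easier one: the pair $(\wh{C},\wh{L})$ lies in $\wh{\ffU}\subseteq\wPic$, hence $\omega_{\wh{C}}^{\log}\otimes\wh{L}^{\otimes\epsilon}>0$ for every $\epsilon\in\bbQ_{>0}$ by Definition \ref{def:wPic}, and twisting further by the effective divisor $\wh{A}$ only raises the degree on every component of every fibre, so $\omega_{\wh{C}}^{\log}\otimes\wh{L}(\wh{A})^{\otimes\epsilon}>0$ as well.

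For condition (i) I would argue as follows. Condition (ii) of Construction \ref{construction W} gives $BL(w)_L=\emptyset$, so $BL(w)=BL(w)_A$; by condition (i) of Construction \ref{construction W} this is a discrete set disjoint from the special points of $C$, and by the remark following Construction \ref{construction W} (which forces $A|_C$ to miss the rational tails of $C$) it is also disjoint from those rational tails. The induced contraction $\mathfrak k$ on universal curves is an isomorphism away from the rational tails, so it carries $BL(w)_A$ onto a finite subset of $\wh{C}$ disjoint from all special points of $\wh{C}$; the only base points of $\wh{w}$ not of this shape are the images of the nodes at which rational tails were attached, and, exactly as in the proof of Theorem \ref{prop:div,rational,tails}, these images are smooth unmarked points of $\wh{C}$. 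Thus $BL(\wh{w})$ is discrete and disjoint from all special points of $\wh{C}$, which is condition (i), and therefore $(\wh{C},\wh{L},\wh{w})\in\wh{W}$.

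Once $\widetilde q(W)\subseteq\wh{W}$ is established the square commutes on the nose, since both composites $W\hookrightarrow\UU\xrightarrow{\widetilde q}\wh{\UU}$ and $W\xrightarrow{q}\wh{W}\hookrightarrow\wh{\UU}$ equal $\widetilde q|_W$ by the very definition of $q$. The only step that calls for care is the one flagged above, namely checking that the $A$-part $BL(w)_A$ of the base locus really stays away from the rational tails of $C$ after one passes to the étale (or analytic) open $\UU'$; but this is a verbatim reuse of the pointwise bookkeeping already done for the contraction $c\colon\Pic\to\wPic$ and for $\oc$, so I do not expect any new difficulty.
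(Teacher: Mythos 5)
Your proposal is correct and follows the same approach as the paper's one-line proof, which says only that one must compare the stability conditions in Construction~\ref{construction W} and Construction~\ref{construction W hat}; you simply make that comparison explicit. (Your inference that $BL(w)_A$ avoids the rational tails of $C$ because $A|_C$ does is not strictly needed: even if part of the base locus sat on a rational tail, its image under $\mathfrak k$ would be one of the smooth unmarked points $\wh{Q}_i$, so condition (i) of Construction~\ref{construction W hat} holds regardless.)
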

\begin{proof}
We only need to check that the image of $W$ is contained in $\wh{W}$. This follows by comparing the stability conditions in \cref{construction W} and \cref{construction W hat}.
\end{proof}

\begin{lemma}\label{lemma:comp of stability}
We have that $\bbR V$ is an \'{e}tale neighbourhood of $\xi$ in $\RGw$.
\end{lemma}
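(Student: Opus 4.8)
The plan is to produce an étale morphism $\bbR V\to\RGw$ together with a point lying over $\xi$. Recall that $\bbR V=\bbR U\times_{\UU}W$, that $W$ is open in $\UU'$, and that $\UU'=\wh{\UU'}\times_{\wh{\UU}}\UU$ is a base change of the étale neighbourhood $\wh{\UU'}\to\wh{\UU}$ of \cref{rk: decomposition of Zw}. Hence $\UU'\to\UU$ is étale; as $W\hookrightarrow\UU'$ is an open immersion, the composite $W\to\UU$ is étale; and therefore so is the projection $p\colon\bbR V\to\bbR U$, being its base change. Moreover $\bbR U$ is an open derived substack of $\SecM$ by \cref{prop:restriction of sec}, and $\RGw$ is open in $\SecM$ by \cref{thm:der_structures_on_Mgn}. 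It thus suffices to prove that (a) the image of $p$ is contained in $\RGw\cap\bbR U$, and (b) $\xi$ lifts along $p$: granting these, $\bbR V\xrightarrow{p}\RGw\cap\bbR U\hookrightarrow\RGw$ is étale, its image is open because étale morphisms are open and, by (b), contains $\xi$ — which is precisely the assertion.

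For (a), a point of $\bbR V$ is, by \cref{prop:UandUU}, the datum of a point $(C,L,s_0,\dots,s_r)\in\bbR U$ together with a lift to $W$ of its image $m_A(C,L,s_0,\dots,s_r)=(C,L,s_0a_A,\dots,s_ra_A)$ in $\UU$, where $a_A$ is the canonical section of $\ccO_{\ffC_{\ffU}}(A)$ (the points of $m_A(\bbR U)$ being exactly the tuples of sections of $\ffL_{\ffU}(A)$ divisible by $a_A$). Its base locus is $BL(s_0,\dots,s_r)\sqcup(A\cap C)$, whose $L$-part in the labelling of \cref{rk: decomposition of Zw} is $BL(s_0,\dots,s_r)$; hence conditions (ii) and (iii) of \cref{construction W} read $BL(s_0,\dots,s_r)=\emptyset$ and ``$\omega_{C}^{\log}\otimes L^{\otimes3}$ ample'', which are exactly the stability conditions of \cref{conditions_st_maps_as_sections}. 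So $(C,L,s_0,\dots,s_r)\in\RGw$, proving (a). (The extra condition (i) of \cref{construction W} only makes the image a possibly smaller open subset of $\RGw\cap\bbR U$.)

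For (b), we must see that $m_A(\xi)=(C',L',s_0'a_A,\dots,s_r'a_A)$ lies in $W\subset\UU'$. It lies in $\UU'$: by \cref{lem:local} the point $m_A(\xi)$ maps under $\widetilde q$ to $\wh{m}_A(\oc(\xi))=\wh{\zeta}$, the centre of the étale neighbourhood $\wh{\UU'}$ of \cref{rk: decomposition of Zw}, so it acquires a lift to $\UU'=\wh{\UU'}\times_{\wh{\UU}}\UU$. This lift lies in $W$: since $\xi$ is a stable map, $(L',s_0',\dots,s_r')$ is base-point free and $\omega_{C'}^{\log}\otimes(L')^{\otimes3}$ is ample, giving conditions (ii) and (iii); and $A=k^*\wh{A}$ meets $C'$ in a discrete set disjoint from its special points — condition (i) — because $\wh{A}$, and hence $A$, was chosen in the construction of $\wh{\ffU}$ and in the further shrinking recorded in the remark following \cref{construction W} to avoid the base points, the special points, and the rational tails of the relevant curves. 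Therefore $(\xi,m_A(\xi))\in\bbR V$ lies over $\xi$, proving (b).

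The substantive step is (b): one has to retrace the successive restrictions of $\wh{\ffU}$ and the choice of the divisors $\wh{A}$, $A$ to guarantee that the fixed point $\xi$ genuinely falls in the locus $W$ carved out by the auxiliary stability conditions — in particular that $A$ meets $C'$ neither along a component nor at a special point. The remaining ingredients — étaleness of $p$, openness of its image, and the matching of the stability conditions in (a) — are formal, coming from base change and the openness of étale morphisms.
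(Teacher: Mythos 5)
Your proof is correct and follows essentially the same route as the paper. Both arguments hinge on (i) the observation that $W\to\UU$ is étale because $\UU'\to\UU$ is the base change of the étale neighbourhood $\wh{\UU'}\to\wh{\UU}$ and $W$ is open in $\UU'$, whence $\bbR V\to\bbR U$ is étale; and (ii) the comparison of the stability conditions of \cref{construction W} with those of \cref{conditions_st_maps_as_sections} for points in the image of $m_A$, using the labelling of the base locus and the choice of $\wh{A}$ disjoint from $BL(\wh{s})$ and from special points. The only organizational difference is that the paper proves an exact identification $\bbR V\simeq\mathfrak{V}\coloneqq\RGw\times_{\SecM}(\bbR U\times_{\UU}\UU')$ (first on truncations, then on derived structures using flatness of étale maps), whereas you only verify that $p$ factors through $\RGw\cap\bbR U$ and leave open whether condition (i) of \cref{construction W} shrinks the image; in fact the paper's converse direction shows (i) is automatic, so the two neighbourhoods coincide. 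This looseness is harmless for the assertion of the lemma, which only requires an étale morphism to $\RGw$ together with a lift of $\xi$, both of which you supply.
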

\begin{proof}
Recall that $\bbR V$ is defined by
\[
\begin{tikzcd}
\bbR V\ar[r]\ar[d]\ar[dr,phantom,"\ulcorner^h",very near start]& W\ar[d]\\
\bbR U\ar[r, "m_A"]&\UU\text{.}
\end{tikzcd}
\]
The first observation is that $W$ is open in $\UU$. Indeed, we have defined an open analytic subset (\'{e}tale neighborhood) $\UU'\subset \UU$ around $\zeta$, the image of $\xi$ under the morphism $m_A$ induced by tensoring with $\ccO(A)$. In this, $W$ is cut out in \cref{construction W} by imposing open stability conditions. Since the point $\zeta$ was the image of a stable point $\xi$, the stability conditions hold for it. So $W\subset \UU$ is an \'{e}tale neighborhood of $\zeta$.

Thus, $\bbR V\to\bbR U$ is also an open embedding, and $\xi\in\bbR V$. On the other hand, we have an open subset $\mathfrak{V}$ of $\bbR U=\bbR\Sec_{\ffU}(\ffL^{\oplus r+1}/\ffC_{\ffU})$ given by
\[
\mathfrak{V}=\RGw\times_{\bbR\Sec_{\Pic}(\ffL^{\oplus r+1}/\ffC)}\left(\bbR U\times_{\UU}\UU'\right)\text{.}
\]
We want to show that $\bbR V$ and $\mathfrak{V}$ are equivalent. First, we show that their truncations are isomorphic, that is $t_0(\mathfrak{V})=V'\simeq t_0(\bbR V)=V$. It suffices to show that the conditions of \cref{construction W} are equivalent to the stability conditions of stable maps on points in the image of (the truncation of) $m_A$. We recall them here for the reader's convenience.
At any point $(C,L,s_0,\dots,s_r)$ of $U\times_{\UU}\UU'$ \cref{conditions_st_maps_as_sections} state it is in $V'$ iff the following hold:
\begin{enumerate}
\item the bundle $\omega_C^{\log}\otimes L^{\otimes 3}$ is ample and
\item the linear system $(L,s_0,\dots,s_r)$ has no base points.
\end{enumerate}
On the other hand, at any point $(C,L(A),w_0,\dots ,w_r)$ of $\UU'$, \cref{construction W} states it is in $W$ iff:
\begin{enumerate}[label=(\roman*)]
\item the base locus $BL(w)$ of $w_0, \dots , w_r$ is discrete and disjoint from the special points of $C$, 

\item the subset $BL(w)_{L}$ of the base locus is empty and
\item the line bundle $\omega_{C}^{\log}\otimes L^{\otimes 3}$ is ample.
\end{enumerate}
Conditions (1) and (iii) are clearly equivalent. We want to show that for points in $m_A(U)$ condition (ii) implies condition \eqref{condition:no_base_points_st_maps_as_sections}, and that \eqref{condition:no_base_points_st_maps_as_sections} holding for points of $U$ implies (i) and (ii), that is: their image under $m_A$ lies in $W$.

Let $(C,L,s_0,\dots,s_r)$ be a point in $U$, 
\[
m_A(C,L,s_0,\dots,s_r)=(C,L(A),w_0,\dots ,w_r)
\]
where $w_i$ is the image of $s_i$ under $H^0(C,L)\to H^0(C,L(A))$. 
Condition (ii) implies that the $BL(w)\subset A$. On the other hand, $\wh{A}$ and $\wh{\ffU}$ were chosen so that $BL(\wh{s})$ does not intersect $\wh{A}$ on the open $\wh{U}$, then also $BL(s)$ does not intersect $A$ in $U$. Then we see that $BL(s)$ must be empty.

Conversely, if $BL(s)$ is empty, $BL(w)$ must be contained in $A$, which implies both condition (i) and (ii).

We have that $V\simeq V'=\Gw\times_{\Sec_{\Pic}(\ffL^{\oplus r+1}/\ffC)}(U\times_{\UU}\UU')$.

Finally, we observe that the derived structures on both \'{e}tale neighborhoods of $\xi$ in $\bbR\Sec_{\Pic}(\ffL^{\oplus r+1}/\ffC)$ are compatible, since \'{e}tale maps are flat. Hence, we have that $\bbR V$ is equivalent to $\ffV$, as desired.

\end{proof}

\begin{lemma}\label{lemma:comp of stability hat}
We have that $\bbR \wh V$ is a neighbourhood of $\wh{\xi}$ in $\Qm$.
\end{lemma}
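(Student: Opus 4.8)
The plan is to mirror the proof of \cref{lemma:comp of stability}, replacing the stability conditions of stable maps by those of quasi-maps throughout. First I would record that $\bbR\wh{V}=\bbR\wh{U}\times_{\wh{\UU}}\wh{W}$ is an open derived substack of $\bbR\wh{U}$, hence of $\SecQ$. Indeed, by \cref{rk: decomposition of Zw} the subset $\wh{\UU'}$ is an \'etale (analytic) neighbourhood of $\wh\zeta$, the image of $\wh\xi$ under the map $\wh{i}\colon\bbR\wh{U}\to\wh{\UU}$ of \cref{lem:local} (multiplication by a local equation of $\wh{A}$), and inside $\wh{\UU'}$ the substack $\wh{W}$ is carved out by the open conditions of \cref{construction W hat}. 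Since $\wh\xi$ is a stable quasi-map and $\wh{A}$ was chosen, over $\wh{\ffU}$, disjoint from the special points and from $BL(\wh{s}')$, the point $\wh\zeta$ satisfies those conditions; thus $\wh{W}\subset\wh{\UU}$ is an \'etale neighbourhood of $\wh\zeta$, $\bbR\wh{V}\hookrightarrow\bbR\wh{U}$ is an open immersion, and $\wh\xi\in\bbR\wh{V}$.

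Next I would introduce the open derived substack $\wh{\mathfrak{V}}\coloneqq\RQm\times_{\SecQ}\bigl(\bbR\wh{U}\times_{\wh{\UU}}\wh{\UU'}\bigr)$ of $\bbR\wh{U}$; it is \'etale over $\RQm$ and contains $\wh\xi$, and the goal is to show $\bbR\wh{V}\simeq\wh{\mathfrak{V}}$. As in \cref{lemma:comp of stability}, it is enough to match their truncations compatibly: both $\bbR\wh{V}$ and $\wh{\mathfrak{V}}$ are \'etale over $\SecQ$ with the same truncation, so the topological invariance of the \'etale site (\'etale maps being flat) promotes the identification of truncations to an equivalence of derived stacks. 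For the truncations I would compare \cref{construction W hat} with \cref{def: quasi maps} on a point $(\wh{C},\wh{L},\wh{s}_0,\dots,\wh{s}_r)$ of $\bbR\wh{U}\times_{\wh{\UU}}\wh{\UU'}$, whose image under $\wh{i}$ is $(\wh{C},\wh{L}(\wh{A}),\wh{w}_0,\dots,\wh{w}_r)$ with $\wh{w}_i=\wh{s}_i\cdot(\text{local equation of }\wh{A})$. Then $BL(\wh{w})=BL(\wh{s})\sqcup\wh{A}$ with $\wh{A}$ finite over the base and disjoint both from the special points and from $BL(\wh{s})$; hence condition~(i) of \cref{construction W hat} holds for this point exactly when $BL(\wh{s})$ is finite in each fibre and disjoint from the nodes and markings of $\wh{C}$, i.e.\ exactly the non-degeneracy condition~(1) of \cref{def: quasi maps}. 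Condition~(2) of \cref{def: quasi maps}, namely $\omega_{\wh{C}}^{\log}\otimes\wh{L}^{\otimes\epsilon}>0$ for all $\epsilon\in\bbQ_{>0}$, holds automatically as $(\wh{C},\wh{L})\in\wh{\ffU}\subset\wPic$ (\cref{def:wPic}); and since $\wh{A}$ is effective, the same input forces condition~(ii) of \cref{construction W hat}. So the two sets of conditions cut out the same open substack of $\bbR\wh{U}\times_{\wh{\UU}}\wh{\UU'}$, giving $t_0(\bbR\wh{V})\simeq t_0(\wh{\mathfrak{V}})$ and hence $\bbR\wh{V}\simeq\wh{\mathfrak{V}}$, an \'etale neighbourhood of $\wh\xi$ in $\RQm$.

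The hard part will be the bookkeeping in the second paragraph: one must check, via the labelling of \cref{rk: decomposition of Zw}, that for points of $\bbR\wh{U}\times_{\wh{\UU}}\wh{\UU'}$ the decomposition $BL(\wh{w})=BL(\wh{w})_{\wh{A}}\sqcup BL(\wh{w})_{\wh{L}}$ specialises, in families, to $\wh{A}$ and $BL(\wh{s})$ respectively, so that ``$BL(\wh{w})$ discrete and disjoint from the special points'' is genuinely equivalent to quasi-map non-degeneracy of $\wh{s}$; this is precisely where the disjointness of $\wh{A}$ from $BL(\wh{s})$ and from the special points, built into the choice of $\wh{\ffU}$, is used. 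By contrast, there is no analogue here of the condition $BL(w)_{L}=\emptyset$ of \cref{construction W}, since quasi-maps are permitted base points, which makes the present comparison somewhat lighter than its stable-map counterpart.
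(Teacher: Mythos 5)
Your proposal is correct and takes exactly the approach the paper intends: the paper's own proof is just the one line ``This is similar to the proof of \ref{lemma:comp of stability}'', and your argument is a faithful (and more detailed) transcription of that analogy, correctly observing that the decomposition $BL(\wh{w})=BL(\wh{s})\sqcup\wh{A}$ reduces condition~(i) of \cref{construction W hat} to quasi-map non-degeneracy, that the $\wPic$-stability of \cref{def:wPic} renders condition~(ii) and condition~(2) of \cref{def: quasi maps} automatic over $\wh{\ffU}$, and that the analogue of ``$BL(w)_L=\emptyset$'' is absent on the quasi-map side.
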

\begin{proof}
This is similar to the proof of \ref{lemma:comp of stability}.

\end{proof}

\begin{lemma}\label{lemma: local V, W}
We have a homotopical cartesian diagram
\[
\begin{tikzcd}
\bbR V\arrow[r]\ar[d, "c" '] \arrow[dr,phantom,"\ulcorner^h", very near start]& W\arrow[d, "q"]\\
\bbR\wh{V}\arrow[r] & \wh{W}\text{.}
\end{tikzcd}
\]
\end{lemma}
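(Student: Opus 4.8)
The plan is to realize the square of the statement as a vertical composite of two homotopy-cartesian squares and then conclude by the pasting law for homotopy pullbacks. Recall from the constructions above that $\bbR V$ is by definition the homotopy pullback $\bbR U\times^{h}_{\UU}W$ (this is the square recalled in the proof of \cref{lemma:comp of stability}), and likewise $\bbR\wh V=\bbR\wh U\times^{h}_{\wh\UU}\wh W$; that $q\colon W\to\wh W$ is the restriction of $\widetilde q\colon\UU\to\wh\UU$ by \cref{lemma:WandUU}; and that the vertical arrow $c\colon\bbR V\to\bbR\wh V$ is the restriction of $\oc$, compatibly with $\widetilde c\colon\bbR U\to\bbR\wh U$ and with the (étale) immersions $\bbR V\hookrightarrow\bbR U$, $\bbR\wh V\hookrightarrow\bbR\wh U$.

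First I would stack the defining square of $\bbR V$ on top of the homotopy-cartesian square of \cref{lem:local}, and separately stack the square to be proved on top of the defining square of $\bbR\wh V$:
\[
\begin{tikzcd}
\bbR V \ar[r]\ar[d] & W\ar[d]\\
\bbR U \ar[r,"i"]\ar[d,"\widetilde c"'] & \UU\ar[d,"\widetilde q"]\\
\bbR\wh U \ar[r,"\wh i"] & \wh\UU
\end{tikzcd}
\qquad\qquad
\begin{tikzcd}
\bbR V \ar[r]\ar[d,"c"'] & W\ar[d,"q"]\\
\bbR\wh V \ar[r]\ar[d] & \wh W\ar[d]\\
\bbR\wh U \ar[r,"\wh i"] & \wh\UU .
\end{tikzcd}
\]
In the left diagram the top square is homotopy cartesian by definition of $\bbR V$ and the bottom one by \cref{lem:local}, so by the pasting law the outer rectangle is homotopy cartesian; explicitly, $\bbR V\simeq\bbR\wh U\times^{h}_{\wh\UU}W$ with structure map $W\to\UU\xrightarrow{\widetilde q}\wh\UU$. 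In the right diagram the bottom square is homotopy cartesian by definition of $\bbR\wh V$. Now \cref{lemma:WandUU} says precisely that $W\to\wh W\hookrightarrow\wh\UU$ equals $W\to\UU\xrightarrow{\widetilde q}\wh\UU$, and the compatibilities recalled above identify the two composites $\bbR V\to\bbR\wh U$; hence the outer rectangle of the right diagram coincides with that of the left diagram, and is therefore homotopy cartesian. Applying the pasting law to the right diagram in the form ``bottom square and outer rectangle homotopy cartesian $\Rightarrow$ top square homotopy cartesian'' yields the claim.

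The argument is almost entirely formal, so I do not expect a serious obstacle; the one point that needs care is to verify that the cube of morphisms implicit above commutes on the nose — that $\bbR V\to\bbR\wh V\to\bbR\wh U$ agrees with $\bbR V\to\bbR U\xrightarrow{\widetilde c}\bbR\wh U$, that $\bbR V\to W$ and $\bbR V\to\bbR\wh V\to\wh W$ are compatible over $q$, and that the two morphisms $W\to\wh\UU$ appearing in the two stackings are the same. Each of these holds because every arrow in sight is a restriction of one of the globally defined morphisms $\oc$, $\widetilde c$, $\widetilde q$ (or an open immersion, or the inclusion of a derived vanishing locus), combined with \cref{lemma:WandUU}; the genuinely geometric inputs — the identification $\bbR U\simeq\bbR\wh U\times^{h}_{\wh\UU}\UU$ over the contraction and the realization of $q$ as a restriction of $\widetilde q$ — have already been secured in \cref{lem:local} and \cref{lemma:WandUU}.
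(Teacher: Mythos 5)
Your argument is correct. The two proofs use the same inputs — the definitions of $\bbR V$ and $\bbR\wh V$ as homotopy pullbacks of $W$ and $\wh W$ along $\bbR U\hookrightarrow\UU$ and $\bbR\wh U\hookrightarrow\wh\UU$, plus \cref{lem:local} and \cref{lemma:WandUU} — but they package them differently. The paper's proof works on the level of Koszul resolutions: it observes that $\bbR V$ and $\bbR\wh V$ are the derived zero loci of $q^{\ast}\wh\theta$ and $\wh\theta$ respectively, where $\wh\theta$ is the restriction of $\wh\sigma$ to $\wh W$ and the identification $E|_{W}\simeq q^{\ast}(\wh E|_{\wh W})$ comes from \cref{lem:local}; the cartesian square is then the base-change statement for derived zero loci of a pulled-back section (as drawn in the paper's overview diagram). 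Your proof avoids naming the Koszul data and argues purely at the level of the pasting (two-out-of-three) law for homotopy pullbacks, identifying the two outer rectangles via \cref{lemma:WandUU} and the fact that $c$, $\widetilde c$ and $\oc$ are restrictions of one another. What your presentation buys is clarity and brevity: it makes the logical dependence on \cref{lem:local} and \cref{lemma:WandUU} explicit and separates the formal nonsense from the geometric input. What the paper's presentation buys is the explicit Koszul description $\ccO_{\bbR V}=\Kos(q^{\ast}F,q^{\ast}\theta)$, $\ccO_{\bbR\wh V}=\Kos(F,\theta)$, which is precisely the form used in the push-forward computation in \cref{sec:main,thm,pushforward}, so the paper's phrasing earns its keep there. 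The one point you rightly flag — that the cube of restrictions commutes coherently — is indeed the only nontrivial verification, and your justification (all arrows in the cube are restrictions of the global $\oc$, $\widetilde c$, $\widetilde q$, or are open immersions) is the correct one.
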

\begin{proof}
We denote the restriction of $(E,\sigma)$ to $W$ by $(F,\theta)$. By \cref{prop:UandUU} we had $\bbR U=\bbR Z(\sigma)$ i.e., $\ccO_{\bbR U}=\Kos(E,\sigma)$. Since $\bbR V=\bbR U\times_{\UU}W$, we have  
\begin{equation}\label{V is kos}
    \bbR V = \bbR Z(\theta)\text{.}
\end{equation}

By \cref{lemma:WandUU} and \cref{lem:local} the restriction of $(\wh{E},\wh{\sigma})$ from $\wh{\UU}$ to $\wh{W}$ is $(q^*F,q^*\theta)$. From \cref{prop:UandUU} we had that $\bbR\wh{U}=\bbR Z(\wh{\sigma})$. Then by the definition of $\bbR \wh{V}$ we have
\begin{equation}\label{V hat is kos}
    \bbR \wh{V} = \bbR Z(q^*\theta)\text{.}
\end{equation}
\end{proof}

\section{Main theorem}\label{sec:main,thm,pushforward}
We are now ready to prove our main theorem on the derived push-forward of the structure sheaf of $\RGw$.
Just recall that contracting rational tails gives a morphism
\[
\oc: \RGw \to \RQm
\]
 To prove our main theorem (See \cref{thm:pushforward}) that is
  \[
 \oc_* \ccO_{\RGw} = \ccO_{\RQm}.
 \]
it is enough to do it locally. That's why \S \ref{subsec:local,embeddings} is useful as we have a local picture for $\oc$.
In \cref{subsec:properness}, we will prove that $q:W \to \wh W$ is proper and birational (See Proposition \ref{prop:properness,q,birational}. 
In \ref{subsec:derived,push-forward,last-sub},  we use the Zariski Main theorem to prove that 
\begin{align}\label{eq:push,struc=struc}
    q_{*}  \ccO_{W}=\ccO_{\wh W}
\end{align}
(see proof of Lemma \ref{lemma:local,statement}).
Then by cohomology and base change, we prove our main theorem (see \cref{thm:pushforward}).

\subsection{Properness of $q$}\label{subsec:properness}

\begin{proposition}\label{prop:properness,q,birational}
The morphism $q: W\to \wh W$ is proper and birational.
\end{proposition}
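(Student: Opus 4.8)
The plan is to prove the two assertions of Proposition~\ref{prop:properness,q,birational} separately. Birationality will follow from the factorisation of $\widetilde q\colon\UU\to\wh\UU$ extracted in the proof of \cref{lem:local}, together with the fact that the rational-tail contraction $\uc\colon\ffM\to\wM$ is an isomorphism over the dense locus of curves without rational tails. The real content is properness: since the bare contraction $\uc$ is \emph{not} separated (cf.\ the footnote in \S\ref{subsubsection:contraction:M,hatM}), the argument must use crucially the stability conditions defining $W$ and $\wh W$, which rigidify the admissible rational tails and thereby kill this non-separatedness. I would verify properness through the valuative criterion for the Deligne--Mumford stacks $W,\wh W$ of \cref{affine w hat}.

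\emph{Birationality.} In the proof of \cref{lem:local} one sees that $\widetilde q$ factors as $\UU\xrightarrow{f}c^*\wh\UU\to\wh\UU$, where the second arrow is the base change of $c\colon\ffU\to\wh\ffU$ and $f$ is induced on total spaces by multiplication by the canonical section of $\ccO(\delta\ffD)$. Over the open locus of $\ffU$ of curves with no rational tails we have $\ffD=\emptyset$, so there $f$ is an isomorphism and $c$ is an isomorphism; hence $\widetilde q$ restricts to an isomorphism over a dense open of $\wh\UU$. Since the open substack $\wh W\subset\wh\UU$ contains the dense locus of quasi-maps whose source base locus $BL(\wh w)_{\wh L}$ is empty, and over such a point no rational tail can be inserted (a tail of degree $\delta$ twists the contracted sections by $\delta$ times its attaching point, forcing a base point there), the fibre of $q$ over that locus is a single reduced point; so $q$ restricts to an isomorphism over a dense open of $\wh W$ and is therefore birational.

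\emph{Properness.} For separatedness, take two extensions in $W$ over a discrete valuation ring $R$ with fraction field $K$ which agree over $K$ and have the same image in $\wh W$; their central fibres differ only by rational tails. But ampleness of $\omega_C^{\log}\otimes L^{\otimes 3}$ forces every rational tail to carry $L$-degree at least $1$, and emptiness of $BL(w)_L$ forces the $(r+1)$ sections to be base-point-free on each tail (using that on $W$ the divisor $A$ avoids the rational tails, as observed after \cref{construction W}); hence each central fibre is the stable model of the rational map given generically by $w_K$, relative to its contraction. This is exactly the separatedness mechanism for stable maps of \cite{Fontanine-QMap2010} and \cite{Cristina-stable-maps-quotients-2014}, so the two extensions coincide. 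For existence (universal closedness), given a family in $\wh W$ over $R$ with a lift over $K$ to $W$, I would, after a finite extension of $R$, extend the family of source curves arbitrarily over $\Spec R$ and then run stable reduction: contract the components on which $\omega_C^{\log}\otimes L^{\otimes 3}$ is not ample and modify the central fibre so that $BL(w)_L$ becomes empty, exactly as in the construction of $\oc\colon\Gw\to\Qm$ (Theorem~\ref{prop:div,rational,tails}) and of the contractions of \cite[Prop.~2.3]{Cristina-stable-maps-quotients-2014} and \cite[Thm.~7.1]{Popo-roth-stable-maps-Quot-2003}. Uniqueness of the contracted model (which is stable on the target side) then identifies the contraction of the new family with the given $\wh W$-family, producing the required lift.

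The step I expect to be the main obstacle is the existence half of the valuative criterion: one must carry out stable reduction in the presence of the auxiliary twist by $A$ (and $\wh A$) and of the \'etale-local labelling of the base locus of \cref{rk: decomposition of Zw}, and check that the central fibre of the extended family satisfies conditions (i)--(iii) of \cref{construction W} \emph{simultaneously}---in particular that $A$ still misses the rational tails and their attaching nodes, so that the contraction of the extension is precisely the prescribed $\wh W$-family and not a further modification of it. With Proposition~\ref{prop:properness,q,birational} in hand, the next subsection deduces $q_*\ccO_W=\ccO_{\wh W}$ by Stein factorisation / Zariski's main theorem over the smooth, hence normal, stack $\wh W$.
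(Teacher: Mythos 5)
Your birationality argument is essentially the paper's (the paper just says $\widetilde q$ is birational and $W,\wh W$ are opens; your version spells out the factorisation, which is fine and arguably clearer). The properness argument, however, is where you diverge from the paper, and where your proposal has a genuine gap.

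The paper does \emph{not} run stable reduction by hand. Instead it introduces an auxiliary stack
\[
\widetilde{W}\;=\;\wh W\times_{\overline{\ccQ}_{g,n}(\bbP^r,d+a)}\overline{\ccM}_{g,n}(\bbP^r,d+a)\text{,}
\]
observing that a point of $\wh W$ is a tuple $(\wh C,\wh L(\wh A),\wh w_0,\ldots,\wh w_r)$ and hence lies in a quasi-map space of degree $d+a$. Properness of $\overline{\ccM}_{g,n}(\bbP^r,d+a)$ makes $\widetilde W\to\wh W$ proper for free. The paper then builds a surjection $\widetilde W\to W$ through which $\widetilde W\to\wh W$ factors (this uses the labelled decomposition $RT=RT_{\tilde L}\sqcup RT_{\tilde A}$ of rational tails, so that one contracts only the $A$-labelled tails). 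Existence in the valuative criterion is then a two-step lift: lift the $K$-point of $W$ to $\widetilde W$ (surjectivity), extend over $R$ there (properness of $\widetilde W\to\wh W$), and push down. Separatedness is the formal statement \cite[Tag 09MQ]{stacks-project}: $\widetilde W\to W$ is surjective and proper and $\widetilde W\to\wh W$ is separated, hence $W\to\wh W$ is separated. None of this requires re-deriving stable reduction in the twisted/labelled setting.

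Your existence step, by contrast, asks to ``extend the family of source curves arbitrarily over $\Spec R$ and then run stable reduction\ldots and modify the central fibre so that $BL(w)_L$ becomes empty''. You yourself flag this as the main obstacle, and it is: you must extend simultaneously the curve, the line bundle, the auxiliary divisor $A$ (so that it still misses the tails, nodes and markings), and the $r+1$ sections; you must preserve the \'etale-local labelling of the base locus from \cref{rk: decomposition of Zw}; and you must land on the \emph{prescribed} $\wh W$-family, not merely some contraction of your chosen extension. None of this is justified in the proposal, and it is exactly the work that the paper's $\widetilde W$ construction is engineered to avoid. Similarly, your separatedness paragraph invokes ``the separatedness mechanism for stable maps'' heuristically; the paper's Tag 09MQ argument via the proper surjection $\widetilde W\to W$ replaces that heuristic with a precise statement. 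So: your plan is plausible in outline but, as written, leaves the hardest step as an acknowledged gap, whereas the paper closes it with the auxiliary fibre product over $\overline{\ccQ}_{g,n}(\bbP^r,d+a)$.
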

\begin{proof}

Birationality follows from the fact that $\widetilde{q}:\UU\to\wh{\UU}$ is birational and $W$, $\wh{W}$ are open subsets of $\UU$ and $\wh\UU$ respectively.

We use the valuative criterion to prove properness. Consider the following diagram
\begin{equation}\label{unique-extension}
\begin{tikzcd}
\bbA^1\setminus \{0\}\ar[d] \arrow[r,"\varphi^\circ"]&W \ar[d,"q"] \\
\bbA^1 \arrow[r,"\widehat{\varphi}"] \arrow[ur,dashed,"\exists !"]&\wh{W}\text{.}
\end{tikzcd}
\end{equation}
The morphism $\wh{\varphi}$ above is given by a family $\wh{\ccC}\to \bbA^1 $, together with a line bundle $\wh\ccG$ and sections $(\wh w_0,\ldots, \wh w_r)$. We denote by $\wh{\ccC}^\circ$ the restriction of $\wh{\ccC}$ to $\bbA^1\backslash\{0\}$. The morphism $\varphi^\circ$ gives a family $(\ccC^\circ,\ccG^\circ, w_0^\circ,\ldots, w_r^\circ)$ such that \[q(\ccC^\circ, \ccG^\circ, w_0^\circ,\ldots, w_r^\circ)=(\wh{\ccC}^\circ, \wh\ccG^\circ, \wh w_0^\circ,\ldots, \wh w_r^\circ)\text{.}
\]
Here, by abuse of notation we denoted by $q$ the map induced by $q: W\to \wh W$. 

In the following we show that there exists a unique morphism $\varphi$ which extends $\varphi^\circ$ and makes diagram (\ref{unique-extension}) commute. In concrete terms, this amounts to finding $(\ccC, \ccG, w_0,\ldots, w_r)$ a family over $\bbA^1$ which extends $(\ccC^\circ,\ccG^\circ, w_0^\circ,\ldots, w_r^\circ)$ and such that 
\[q(\ccC, \ccG, w_0,\ldots, w_r)=(\wh{\ccC}, \wh\ccG, \wh w_0,\ldots, \wh w_r)\text{.}
\]

{\bf Existence.}

By definition, $\wh W$ parameterises  tuples $(\wh C, \wh L(\wh A), \wh w_0,\ldots \wh w_r)$, subject to the non-degeneracy condition in \ref{construction W hat}. This shows that $\wh W$ is a subset of $\overline{\ccQ}_{g,n} (\bbP^r,d+a)$. Let $\widetilde{W}$ be the fibre product
\[\xymatrix{\widetilde{W}\ar[r]\ar[d]& \overline{\ccM}_{g,n} (\bbP^r,d+a)\ar[d]^{\oc}\\
\wh W\ar[r]&\overline{\ccQ}_{g,n} (\bbP^r,d+a)\text{.}
}
\]

In the following we construct a morphism $\widetilde W\to W$ such that $\widetilde W\to\wh W$ factors through $\widetilde W\to W$. The construction is the one in Theorem \ref{prop:div,rational,tails} with minor modifications. Let $(\widetilde{\ffC},\widetilde{\ffG})$ denote the universal curve and universal bundle on $\Pic_{d+a}$.
We have 
\begin{align*}
&\widetilde W\subset\Sec_{\Pic_{d+a}}(\widetilde{\ffG}^{\oplus r+1}/\widetilde{\ffC})\\
& W\subset\Sec_{\Pic_d}(\ffL(A)^{\oplus r+1}/\ffC)= W\subset\Sec_{\Pic_d}(\ffG^{\oplus r+1}/\ffC)\\
& \wh{W}\subset\Sec_{\wh{\Pic}_d}(\wh{\ffL}(\wh{A})^{\oplus r+1}/\wh{\ffC})= \Sec_{\wh{\Pic}_{d+a}}({\wh{\ffG}_{d+a}}^{\oplus r+1}/{\wh{\ffC}_{d+a}})\simeq \Sec_{\wh{\Pic}_{d+a}}({\wh{\ffL}_{d+a}}^{\oplus r+1}/{\wh{\ffC}_{d+a}})
\end{align*}
where $\wh{\ffC}_{d+a},\wh{\ffL}_{d+a}$ are the universal curve and line bundle over $\wPic_{d+a}\cong\wPic_d$. The isomorphism $\wPic_d\to\wPic_{d+a}$ is given by $(\wh{C},\wh{L})\mapsto (\wh{C},\wh{L}\otimes\ccO_{\wh{C}}(\wh{A}))$.

\textit{Claim.} Let $(\widetilde{C},\widetilde{G},\widetilde{w})\in \widetilde{W}$ and let $RT$ be a rational tail of $\widetilde{C}$. We have 
\[ RT=RT_{\tilde{\ccL}}\sqcup RT_{\tilde A} \text{.}
\]

The claim follows from the fact that $\un{\widetilde{w}}= \un{\wh{w}}$ outside the exceptional locus of $\widetilde\ccC\to \wh\ccC$ and the fact that $BL(\un{w})=BL(\un{w})_L\sqcup BL(\un{w})_A$.  We need to show that the labelling on $\wh{W}$ lifts to a labelling of the rational tails of the universal curve of $\widetilde{W}$. We have that $p:\widetilde{\ffC}\to \ffC$ contracts rational tails.  Since the base loci of $w_L$ and $w_A$ are disjoint we get that the base loci of $p^{-1}w_L$ and $p^{-1}w_\ccA$ are disjoint. Moreover, since the base loci of $w_L$ and $w_A$ form disconnected components, the same holds about their inverse images. This proves the claim.

By possibly shrinking $\wh{W}$ and changing the basis of $\bbP^r$, we have a divisor $Z(\un{\tilde{w}})_{\widetilde A}$ on $\tilde\ccC$, which we denote by $\widetilde{A}$. Let $\tilde\ccL$ denote $\tilde\ccG\otimes \ccO(-\widetilde{A})$.

Let $S$ be a scheme. In the following we contract rational tails of $\widetilde\ccC_S$ which intersect $\widetilde A$. Let $D_A$ be the divisor of $\widetilde\ccC_S$, which consists of rational tails $RT_{\widetilde{A}}$ and let $\delta_A$ be the degree of $\widetilde\ccL_S$ restricted to the tails. We have that $\ccL_S':=\widetilde\ccL_S(\delta_A D_A)$ is trivial along the exceptional locus $D_A$ and base point free. Let us define
\[
\ccC=\Proj \sum _nH^0(\widetilde{\ccC}_S, (\ccL_S')^{\otimes n})\text{.}
\]
 Let $\kappa: \widetilde{\ccC}_S\to\ccC_S$ and let $\ccL_S:=\kappa_*\ccL'_S$. Since $\ccL'_S$ is trivial along $D_A$, Lemma 7.1 in \cite{Popo-roth-stable-maps-Quot-2003}  implies that $\ccL_S$ is a line bundle. In the same way as we did in the proof of Theorem \ref{prop:div,rational,tails}, we construct $(w_0,\ldots,w_r)$ sections of $\ccL$. We thus obtain a surjective morphism $\widetilde W\to W$. It can be seen that $\widetilde{W}\to \wh W$ factors through $\widetilde{W}\to W$.

 Since $\widetilde{W}\to W$ is surjective, there exists a (non unique) family of maps \[(\widetilde{\ccC}^\circ,\widetilde\ccG^\circ, \widetilde w_0^\circ,\ldots, \widetilde w_r^\circ)\]
such that 
\[q(\widetilde{\ccC}^\circ,\widetilde\ccG^\circ, \widetilde w_0^\circ,\ldots, \tilde w_r^\circ)=(\ccC^\circ,\ccG^\circ, w_0^\circ,\ldots, w_r^\circ)\text{.}
\]
Equivalently, we have a family $\widetilde{\varphi}^{\circ}:\bbA^1\backslash\{0\}\to \widetilde{W}$ which commutes with $\varphi^{\circ}$. Since $\Gw$ is proper, we have that $\oc$ is proper. This implies that $\widetilde{W}\to \wh W$ is proper. This shows that $\widetilde{\varphi}^{\circ}$ extends (uniquely) to $\widetilde{\varphi}: \bbA^1\to \widetilde{W}$, and thus we obtain a family of curves $\widetilde{\ccC}\to\bbA^1$, a line bundle $\widetilde\ccG$ on $\tilde{\ccC}$ and  $(\widetilde{w}_0,\ldots \widetilde{w}_r)$ global sections of $\ccG$ which makes the following diagram commute. 

\begin{equation}
\begin{tikzcd}
\bbA^1\ar[dd] \arrow[r,"\widetilde{\varphi}"]&\widetilde W \ar[d] \\
&W\ar[d]\\
\bbA^1 \arrow[r,"\widehat{\varphi}"]&\wh{W}
\end{tikzcd}
\end{equation}

To complete the following diagram
\begin{equation}
\begin{tikzcd}
\bbA^1\ar[dd] \arrow[r,"\varphi"]&\widetilde W \ar[d] \\
&W\ar[d]\\
\bbA^1 \arrow[r,"\widehat{\varphi}"]\arrow[ur,dashed,"\exists !"]&\wh{W}
\end{tikzcd}
\end{equation}
we consider the image of $(\widetilde{\ccC}, \widetilde{\ccL}, \widetilde{w}_0,\ldots \widetilde{w}_r)$ in $W$.
\ \\

{\bf Uniqueness.}
In notations as before, we have morphisms $\widetilde{W}\to W \to \wh{W}$.
We have that $\widetilde{W}\to \wh{W}$ is separated, because by construction it is proper. The map $\widetilde{W}\to W$ is surjective and proper by the discussion above. With this, we are under the assumptions of \cite[{Tag 09MQ}]{stacks-project}. This shows that $W\to \wh{W}$ is separated.

\end{proof}

\begin{lemma}\label{projective}
Let $|W|$ and $|\wh W|$ denote the coarse moduli spaces of $W$, respectively $\wh W$. The morphism $|q|:|W|\to |\wh{W}|$ is projective.
\end{lemma}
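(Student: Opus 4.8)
The plan is to produce an explicit $|q|$-ample line bundle on $|W|$ and invoke that a proper morphism carrying a relatively ample line bundle is projective. First, $q\colon W\to\wh W$ is proper by \cref{prop:properness,q,birational}, and the induced morphism of coarse moduli spaces of these separated Deligne--Mumford stacks of finite type is again proper (Keel--Mori; \cite{stacks-project}), so $|q|\colon|W|\to|\wh W|$ is proper. It therefore suffices to exhibit a line bundle on $|W|$ which is ample relative to $|\wh W|$.

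The candidate is the Fulton--Pandharipande-type polarization: on the universal curve $\pi_{W}\colon\ffC_{W}\to W$ with universal line bundle $\ccL_{W}$ (of degree $d$, recovered from $\ccL_{W}(A)$ by dividing out the equation of $A$ as in \cref{construction W}), set
\[
\Lambda\coloneqq\Bigl(\det\rder\pi_{W*}\bigl(\omega^{\log}_{\pi_{W}}\otimes\ccL_{W}^{\otimes 3}\bigr)\Bigr)^{\otimes m}\quad(m\gg 0)\text{.}
\]
Since every point of $W$ has finite automorphisms, a sufficiently divisible power $\Lambda^{\otimes N}$ descends to a line bundle $\overline{\Lambda}$ on $|W|$, and I would then check that $\overline{\Lambda}$ is ample relative to $|\wh W|$. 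This is the heart of the matter and the point where I expect the main difficulty: one must control the fibres of $q$, which are (weighted) moduli of rational tails sprouting off a fixed contracted curve, and verify positivity of $\Lambda$ there. I would do this by reducing to the already--known projectivity of the rational--tails contraction $\oc\colon\Gw\to\Qm$ (and of its degree--$(d+a)$ version $\widetilde W\to\wh W$ appearing in the proof of \cref{prop:properness,q,birational}), for which precisely such a determinant--of--cohomology polarization is relatively ample --- cf. \cite{Fontanine-QMap2010}, \cite[Theorem 3]{Marian-Oprea-Pand-moduli-stable-2011}, \cite{Cristina-stable-maps-quotients-2014}, \cite[Lemma 7.1]{Popo-roth-stable-maps-Quot-2003} --- using the étale--local identification (after \cref{construction W}) of $W$ with an open of a quasi--affine bundle over an open substack of $\Gw$, compatibly with $q$ and $\oc$.

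An equivalent, perhaps more economical, route avoids naming $\Lambda$ explicitly. Since projectivity of a morphism of schemes is local for the étale (indeed fppf) topology on the target, I may use \cref{affine w hat} to reduce to the case $\wh W=[Y/G]$ with $Y=\Spec A$ smooth affine and $G$ finite, so that $|\wh W|=\Spec A^{G}$ is affine; then $|q|$ is projective if and only if $|W|$ is quasi--projective over $\bbC$, and the latter follows from $|W|=|W\times_{\wh W}Y|/G$ together with quasi--projectivity of the coarse space of the Deligne--Mumford stack $W\times_{\wh W}Y$, which is proper over the affine $Y$ and carries the polarization $\Lambda$ above. Either way, the crux remains the relative ampleness of $\Lambda$ --- that is, the projectivity of a rational--tails contraction --- and everything else is formal.
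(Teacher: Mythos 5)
Your overall strategy — produce a determinant/Deligne-pairing line bundle, descend a power to $|W|$, and establish relative ampleness — is the same in spirit as the paper's, and the reduction via \cref{affine w hat} to the case $|\wh W|=\Spec A^G$ affine is also sound. But there is a genuine gap at precisely the point you flag as "the heart of the matter": you defer the ampleness of $\Lambda$ to "the already-known projectivity of the rational-tails contraction $\oc\colon\Gw\to\Qm$" via an "étale-local identification of $W$ with an open of a quasi-affine bundle over an open substack of $\Gw$." That identification is not available. Points of $W$ are tuples $(C, L(A), w_0,\dots,w_r)$ in which the $w_i$ are allowed to have base points along $A$ and the ampleness condition is imposed on $\omega^{\log}_C\otimes L^{\otimes 3}$, not on $\omega^{\log}_C\otimes L(A)^{\otimes 3}$; such tuples are neither stable maps of degree $d$ nor of degree $d+a$, and $W$ is not a bundle over $\Gw$. (The auxiliary space $\widetilde W$ from the proof of \cref{prop:properness,q,birational} does dominate $W$, but the morphism goes the wrong way for pulling back a polarization.) So the reduction to the literature does not go through as stated, and the crucial step is left unproved.

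The paper's proof does not make this reduction. Instead it redoes Cornalba's argument \cite{cornalba_projective} directly for $W$, with a nontrivial modification: lacking a morphism to $\bbP^r$ (since the $w_i$ may have base points), it uses the zero loci of three generic sections $w_0,w_1,w_2$ to produce $3(d+a)$ new marked points on each fibre, resolves the resulting intersections, and thereby reduces the required Seshadri-type inequality $(V_F\cdot V_F)\ge \alpha\,m(S)$ to Cornalba's ampleness on $\overline{M}_{g,n+3(d+a)}$. The line bundle it uses is the Deligne self-pairing $\langle V_F,V_F\rangle$ with $V_F=\omega^{\log}_{\ffC_S}\otimes\ccL_S(A_S)^{\otimes 3}$ — note the twist by $A$, which your candidate omits. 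This intersection-theoretic verification is the substantive content of the lemma, and your proposal replaces it with an unjustified appeal to prior results about a different pair of moduli spaces. One minor additional caution: the blanket claim that projectivity of morphisms is étale (or fppf) local on the target is false without extra data; what is true, and what you implicitly need, is that relative ampleness of a globally defined line bundle can be checked fppf-locally on the base, so you must in any case keep a global $\Lambda$ in hand and verify its positivity.
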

\begin{proof}
Since $\wh{W}$ is separated, it suffices to show that $|q|:|W|\to |\wh{W}|$ is projective. Recall that $W$ is open in a DM stack $\pi_*\ffL(A)$ defined by the following stability conditions, for a point $(C,L(A),w_0,\ldots w_r)$ 
\begin{enumerate}
\item the bundle $\omega^{\log}_C\otimes L^{\otimes 3}$ is ample
\item The base locus $BL(w)=\bigcap_{i=0}^{r}Z(w_i)$ has dimension 0 and is distinct from marked points and nodes.
\end{enumerate}
To show projectivity, we can follow the proof of \cite{cornalba_projective}. We sketch here the necessary modifications, trying to adhere to the notation of the original proof as much as possible.

A family $F:S\to W$ consists of a pre-stable curve $\pi_S:C_S\to S$ with $n$ marked points $(x_1,\ldots x_n):S\to C_S$, a distinguished divisor $A_S$ of degree $a$, a line bundle $L_S$ of degree $d$ and sections $(w_0,\ldots w_r)$ of $L_S(A_S)$.
We can define define a line bundle on $S$ by
\[
\ccV_F=\langle\omega_{\ffC_{S}}^{\log}\otimes\ccL_{S}(A_S)^{\otimes 3},\omega_{\ffC_{S}}^{\log}\otimes\ccL_{S}(A_S)^{\otimes 3}\rangle
\]
using Deligne's bilinear pairing, explicitly for $V_F=\omega_{\ffC_{S}}^{\log}\otimes\ccL_{S}(A_S)^{\otimes 3}$, we have
\[
\ccV_F=\det \rder\pi_{S *}\ccO_S\otimes (\det\rder\pi_{S *} V_F)^{\otimes -2}\otimes \det \rder\pi_{S *} (V_F\otimes V_F).
\]
We want to show this bundle $\ccV_F$ is ample. Following Cornalba's approach, which relies on Seshadri's criterion, it suffices to show that there exists a constant $\alpha=\alpha(g,n,r,d)>0$ such that for any non-isotrivial family $F$ over an integral complete curve $S$, since we have already proved that $|q|:|W|\to |\wh{W}|$ is proper.
\begin{equation}\label{eq:toproveampleness}
(V_F\cdot V_F)\geq \alpha m(S)
\end{equation}
where $m(S)$ denotes the maximum multiplicity of points in $S$. 

Since the number of nodes of the curve $C_S$ is bounded in terms of $(g,n,d,r)$ for any family, we may reduce to the case of a family $F$ whose generic curve is smooth, as in the original proof. Now the idea is to add marked points to $C_S$ to obtain a stable domain curve. Since we do not have a well-defined map to $\bbP^r$, we can use the sections to add $3(d+a)$ marked points. Indeed, by taking linear combinations of the sections $(w_0\ldots w_r)$ we may assume that we have a linearly independent set $(w_0,w_1,w_2)$ such that for $i\in\{0,1,2\}$ the following conditions hold (c.f. \cite[Lemma 2]{cornalba_projective} note that our condition (iii) is equivalent to (ii), (iii) and (v)):
\begin{enumerate}[label=(\roman*)]
\item $Z(w_i)$ does not contain components of the fiber of $\pi_S$
\item $Z(w_i)$ does not contain $x_j$ for $j=1,\ldots n$
\item $Z(w_i)$ consists of $d+a$ distinct, non-special points on all the fibers of $\pi_S$ which are singular or lie over singular points of $S$. 
\end{enumerate}
We take 
\begin{align*}
&Z(w_0)=x_{n+1}+\dots x_{n+d+a}\\
&Z(w_1)=x_{n+d+a+1}\dots x_{n+2(d+a)}\\
& Z(w_2)=x_{n+2(d+a)+1}\dots x_{n+3(d+a)}
\end{align*}
where we may assume, up to some finite base change of bounded degree, that $(x_{n+1},\dots , x_{n+3(d+a)})$ are distinct as sections of $\pi_S$ and distinct from the original sections $(x_1,\ldots x_n)$. Now, on smooth fibers of $\pi_S$, some of the $x_i$'s may still meet, indeed they will if the $w$'s defined a linear system with non-empty base-locus. We may proceed to resolve them as in \cite[Proof of Lemma 2]{cornalba_projective} and obtain a family of stable curves
\[
F'=\left\{C'_S\to S, x'_1,\dots , x'_{n+3(d+a)}\right\}
\]
with
\begin{equation}\label{eq:inequalityVF}
(V_F\cdot V_F)=(\omega_{C_S}(D)\cdot\omega_{C_S}(D))\geq (\omega_{C'_S}(D')\cdot \omega_{C'_S}(D')).
\end{equation}
where $D=\sum_{i=1}^{n+3(d+a)}x_i$ and $D'=\sum_{i=1}^{n+3(d+a)}x'_i$. We may assume $F'$ is a non-isotrivial family, otherwise we proceed as in \cite[Lemma 3]{cornalba_projective}. Now, $(S,C'_{S},D')$ is a non-isotrivial stable family, so $\kappa_1=\pi_{S' *}(\omega_{C_S'}(D')^{\otimes 2})$ is ample on $S$, thus by Seshadri's criterion and \eqref{eq:inequalityVF} we have the required $\alpha$ to conclude that \eqref{eq:toproveampleness} holds.

\end{proof}

\subsection{Derived push-forward}\label{subsec:derived,push-forward,last-sub}
In this subsection, we will prove the main theorem of this paper that is:

\begin{theorem}\label{thm:pushforward}
For any, $g,n$ and $d$, we have that 
\[
\oc_* \ccO_{\RGw} = \ccO_{\RQm} \mbox{ in } \ccD^b(\Coh(\bbR\overline{\ccQ} (\bbP^r,d))\text{.}
\]
\end{theorem}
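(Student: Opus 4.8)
The plan is to prove the stated equality \'etale-locally on $\RQm$, using the compatible derived atlases built in \S\ref{subsec:local,embeddings}, and then to descend. For every closed point $\xi\in\RGw$ with image $\wh\xi=\oc(\xi)$, Lemmas \ref{lemma:comp of stability} and \ref{lemma:comp of stability hat} supply an \'etale neighbourhood $\bbR V$ of $\xi$ in $\RGw$ and an \'etale neighbourhood $\bbR\wh V$ of $\wh\xi$ in $\RQm$, sitting in a homotopy cartesian square $\bbR V\simeq\RGw\times^{h}_{\RQm}\bbR\wh V$ over $\oc$. Since $\oc$ is proper (its truncation $\oc\colon\Gw\to\Qm$ goes from a proper stack to a separated one, so $\rder\oc_{*}$ preserves $\ccD^{b}(\Coh)$) and the $\bbR\wh V$ form an \'etale, hence flat, cover of $\RQm$, flat base change gives $(\rder\oc_{*}\ccO_{\RGw})|_{\bbR\wh V}\simeq\rder\oc_{*}\ccO_{\bbR V}$ for the restricted map $\oc\colon\bbR V\to\bbR\wh V$. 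So it suffices to prove, canonically in $\xi$, the local statement $\rder\oc_{*}\ccO_{\bbR V}\simeq\ccO_{\bbR\wh V}$; canonicity then lets these glue to the global isomorphism.

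For the local statement I would invoke Lemma \ref{lemma: local V, W}, which exhibits $\oc\colon\bbR V\to\bbR\wh V$ as the base change of a proper birational morphism $q\colon W\to\wh W$ of smooth Deligne--Mumford stacks along the quasi-smooth closed immersion $b\colon\bbR\wh V\hookrightarrow\wh W$, i.e. $\bbR V\simeq\bbR\wh V\times^{h}_{\wh W}W$, with a parallel closed immersion $a\colon\bbR V\hookrightarrow W$ and $q\circ a=b\circ\oc$. Derived base change along the proper morphism $q$ then gives
\[
\rder\oc_{*}\ccO_{\bbR V}=\rder\oc_{*}\lder a^{*}\ccO_{W}\simeq\lder b^{*}\rder q_{*}\ccO_{W}\text{.}
\]
Hence the theorem is reduced to the single assertion $\rder q_{*}\ccO_{W}\simeq\ccO_{\wh W}$, from which $\rder\oc_{*}\ccO_{\bbR V}\simeq\lder b^{*}\ccO_{\wh W}=\ccO_{\bbR\wh V}$ follows at once.

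Finally, to obtain $\rder q_{*}\ccO_{W}\simeq\ccO_{\wh W}$ I would pass to coarse moduli spaces. By Proposition \ref{prop:properness,q,birational} the morphism $q$ is proper and birational, and by Lemma \ref{projective} the induced map $\lvert q\rvert\colon\lvert W\rvert\to\lvert\wh W\rvert$ is projective; as $W$ and $\wh W$ are smooth over $\bbC$ and, by Remark \ref{affine w hat}, are finite quotients of smooth affine schemes, their coarse spaces have quotient (hence rational, hence normal) singularities. Zariski's main theorem applied to the proper birational $\lvert q\rvert$ over the normal $\lvert\wh W\rvert$ gives $\lvert q\rvert_{*}\ccO_{\lvert W\rvert}=\ccO_{\lvert\wh W\rvert}$, and rationality of the singularities of $\lvert W\rvert$ and $\lvert\wh W\rvert$ forces the higher direct images to vanish, so $\rder\lvert q\rvert_{*}\ccO_{\lvert W\rvert}\simeq\ccO_{\lvert\wh W\rvert}$; since in characteristic $0$ the coarse-moduli morphisms are cohomologically trivial on structure sheaves, this lifts back to $\rder q_{*}\ccO_{W}\simeq\ccO_{\wh W}$. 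The genuinely hard part of the whole argument is not this Koszul-and-base-change bookkeeping but its input: the properness and birationality of $q$, which rest on the delicate valuative-criterion argument of \S\ref{subsec:properness} (dominating $W$ by the auxiliary stable-map space $\widetilde W$ and using properness of $\RGw$) and, before that, on the construction in \S\ref{subsec:local,embeddings} of two genuinely compatible derived atlases.
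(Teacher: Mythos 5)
Your overall strategy matches the paper's: reduce to the local charts $\bbR V,\bbR\wh V$ from \S\ref{subsec:local,embeddings}, use the homotopy cartesian square of Lemma~\ref{lemma: local V, W} together with derived base change to reduce to $\rder q_*\ccO_W\simeq\ccO_{\wh W}$, and obtain this from properness/birationality of $q$ (Proposition~\ref{prop:properness,q,birational}) and projectivity of $\lvert q\rvert$ (Lemma~\ref{projective}). The first half of your argument is essentially identical to the paper's Lemma~\ref{lemma:local,statement} and the surrounding discussion.

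The gap is in your final ``lift back'' step. You pass to the absolute coarse moduli spaces, establish $\rder\lvert q\rvert_*\ccO_{\lvert W\rvert}\simeq\ccO_{\lvert\wh W\rvert}$, and then assert that because coarse-moduli morphisms are cohomologically trivial on structure sheaves in characteristic zero, this ``lifts back'' to $\rder q_*\ccO_W\simeq\ccO_{\wh W}$. That inference does not follow: what you actually obtain is $\rder\pi_{\wh W*}(\rder q_*\ccO_W)\simeq\ccO_{\lvert\wh W\rvert}\simeq\rder\pi_{\wh W*}\ccO_{\wh W}$, where $\pi_{\wh W}\colon\wh W\to\lvert\wh W\rvert$ is the coarse-moduli map, and $\rder\pi_{\wh W*}$ is far from conservative on $\ccD^b(\Coh(\wh W))$ (already for $\wh W=BG$ it kills every non-trivial irreducible representation). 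So knowing that both sides have the same pushforward to the coarse space does not identify them as complexes on the stack $\wh W$; your argument needs to control the genuine stacky structure of the fibres of $q$, not just their coarse spaces. Note also that $q$ itself need not be representable, so one cannot avoid this by working smooth-locally on $\wh W$.

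The paper avoids this by factoring $q$ through the \emph{relative} coarse moduli space: $q=q'\circ\alpha$ with $\alpha\colon W\to\lvert W\rvert\times_{\lvert\wh W\rvert}\wh W$ the relative coarse-moduli map and $q'\colon\lvert W\rvert\times_{\lvert\wh W\rvert}\wh W\to\wh W$ the projection. By \cite[Theorem~3.1]{AOV-twisted-stable} the map $\alpha$ is finite with $\rder\alpha_*\ccO_W$ equal to the structure sheaf of the relative coarse moduli (Lemma~\ref{lemma:vistoli}), while $q'$ is \emph{representable}, projective and birational, with smooth Deligne--Mumford target $\wh W$, so Zariski's main theorem gives $\rder^0q'_*=\ccO_{\wh W}$ and the higher-pushforward vanishing theorem of R\"ulling/Hironaka gives $\rder^iq'_*=0$ for $i>0$ (Lemma~\ref{lemma:Zariski,main,thm}). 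Composing yields $\rder q_*\ccO_W\simeq\ccO_{\wh W}$ without ever leaving the category of sheaves on $\wh W$. Your argument can likely be repaired along these lines, but as written the descent from coarse spaces back to the stack is missing.
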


\begin{remark}

At the level of virtual classes, we have that
\[
\oc_* [\Gw]^\vir = [\Qm]^\vir \text{.}
\]
This was proven in \cite{Fontanine-QMap2010}, \cite{Marian-Oprea-Pand-moduli-stable-2011} and \cite{Cristina-stable-maps-quotients-2014}.
\end{remark}

We deduce the following corollary.
\begin{corollary}\label{coro:K,GW,QM}
The $G$-theoretic Gromov-Witten invariants and the $G$-theoretic quasimaps invariants are equal.
\end{corollary}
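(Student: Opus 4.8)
The plan is to deduce \cref{coro:K,GW,QM} from \cref{thm:pushforward} by a projection-formula argument, once the structure sheaves $\ccO_{\RGw}$ and $\ccO_{\RQm}$ have been matched with the virtual structure sheaves that compute the invariants. Recall that the $G$-theoretic Gromov--Witten and quasi-map invariants are $G$-theoretic Euler characteristics
\[
\bigl\langle\gamma_1\psi_1^{a_1},\dots,\gamma_n\psi_n^{a_n}\bigr\rangle^{\bullet}_{g,n,d}
=\chi\Bigl(\ccO^{\vir}_{\bullet}\otimes\textstyle\bigotimes_{i=1}^{n}\ev_i^{*}\gamma_i\otimes\psi_i^{a_i}\Bigr),
\qquad \gamma_i\in K^{0}(\bbP^r),
\]
where $\bullet$ is $\Gw$ or $\Qm$, the $\ev_i$ are evaluation to $\bbP^r$ at the markings, the $\psi_i$ are the cotangent-line classes there, and $\chi$ is push-forward to a point. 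By d\'evissage the closed immersions $\Gw\hookrightarrow\RGw$ and $\Qm\hookrightarrow\RQm$ coming from truncation induce isomorphisms $G_0(\Gw)\xrightarrow{\sim}G_0(\RGw)$ and $G_0(\Qm)\xrightarrow{\sim}G_0(\RQm)$, under which $[\ccO_{\RGw}]$ and $[\ccO_{\RQm}]$ correspond, by definition, to $\ccO^{\vir,\DAG}_{\Gw}$ and $\ccO^{\vir,\DAG}_{\Qm}$. Since $\RGw$ and $\RQm$ are quasi-smooth --- this is \cref{cor:pot} combined with \cref{thm:der_structures_on_Mgn} and the discussion of \cref{ss:qm} --- with the expected classical truncations, these coincide with the perfect-obstruction-theory virtual structure sheaves of \cite{Lee-QK-2004} used to define the invariants (e.g. in \cite{Fontanine-QMap2010}), by the comparison of \cite[\S 5.4 and \S 5.5]{DAG-panorama-2021}.

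The next step is to transport \cref{thm:pushforward} to $G$-theory. The truncation $t_0(\oc)\colon\Gw\to\Qm$ is proper ($\Gw$ is proper and $\oc$ was shown to be so), hence has a push-forward $t_0(\oc)_{*}\colon G_0(\Gw)\to G_0(\Qm)$; as the restriction of $\oc$ to the classical truncations is exactly $t_0(\oc)$, functoriality of push-forward and d\'evissage identify the derived push-forward $\oc_{*}$ with $t_0(\oc)_{*}$ under the isomorphisms above. Thus \cref{thm:pushforward} becomes
\[
t_0(\oc)_{*}\,\ccO^{\vir}_{\Gw}=\ccO^{\vir}_{\Qm}\quad\text{in }G_0(\Qm),
\]
and, by the projection formula for the proper morphism $t_0(\oc)$,
\[
t_0(\oc)_{*}\bigl(\ccO^{\vir}_{\Gw}\otimes t_0(\oc)^{*}\beta\bigr)=\ccO^{\vir}_{\Qm}\otimes\beta
\qquad\text{for every }\beta\in K^{0}(\Qm).
\]

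It remains to see that the insertions are $\oc$-compatible. Contracting rational tails does not move the marked points --- rational tails carry no markings, and by the argument in the proof of \cref{prop:div,rational,tails} the only base points created by $\oc$ lie at the unmarked, smooth images of the attaching nodes of the contracted tails --- so $\ev_i^{\Gw}=\ev_i^{\Qm}\circ t_0(\oc)$ as maps to $\bbP^r$, and the cotangent line at each marking is unaffected, giving $\psi_i^{\Gw}=t_0(\oc)^{*}\psi_i^{\Qm}$. Hence $\bigotimes_i\ev_i^{*}\gamma_i\otimes\psi_i^{a_i}$ on $\Gw$ is the $t_0(\oc)$-pull-back of the analogous class $\beta$ on $\Qm$; substituting this $\beta$ into the projection formula and applying $\chi$ gives the equality of the two invariants. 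The one point requiring care is the first step, namely that the derived structure sheaves we push forward are genuinely the virtual structure sheaves entering the $G$-theoretic invariants (the $\DAG$ versus $\mathrm{POT}$ comparison); this is handled by the cited results, whose quasi-smoothness hypotheses were checked earlier in the paper, so everything else is formal.
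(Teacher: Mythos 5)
Your derivation is correct and is the argument the paper has in mind (the paper states the corollary without a separate proof, so you have filled in what it leaves implicit). The chain of reasoning — dévissage identifying $[\ccO_{\RGw}]$, $[\ccO_{\RQm}]$ with $\ccO^{\vir,\DAG}$; the comparison $\ccO^{\vir,\DAG}=\ccO^{\vir,\mathrm{POT}}$ from the quasi-smoothness of the derived enhancements (which the paper does establish, via \cref{thm:tanS}, \cref{cor:pot} and \cref{thm:der_structures_on_Mgn}); transporting \cref{thm:pushforward} along these isomorphisms via functoriality of proper push-forward; the projection formula; and finally the compatibility $\ev_i^{\Gw}=\ev_i^{\Qm}\circ t_0(\oc)$ and $\psi_i^{\Gw}=t_0(\oc)^{*}\psi_i^{\Qm}$ because the contraction is an isomorphism near the markings — is exactly right, and you correctly single out the $\DAG$-versus-$\mathrm{POT}$ comparison as the only non-formal input.

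Two small points of hygiene. First, the parenthetical ``($\Gw$ is proper and $\oc$ was shown to be so)'' is a slightly misleading justification for properness of $t_0(\oc)$: the paper only proves properness of the local models $q\colon W\to\wh{W}$ in \cref{prop:properness,q,birational}; the global statement you actually want is simply that any morphism from the proper stack $\Gw$ to the separated stack $\Qm$ is proper, which needs no appeal to \S 6. Second, you should note (as the paper does in \cref{thm:potMgn} and the last paragraph of \cref{ss:qm}) that the perfect obstruction theory induced by the derived enhancement agrees with the one used classically to define the invariants — the $\DAG/\mathrm{POT}$ comparison of \cite{DAG-panorama-2021} identifies $\ccO^{\vir,\DAG}$ with the Lee sheaf built from the \emph{derived-induced} POT, so this extra matching is the reason the result speaks to the invariants defined in the literature. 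You gesture at this with the phrase ``with the expected classical truncations,'' but making it explicit would close the loop.
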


\begin{remark}
Recall that by definition
\[
K(X):= K(\Perf(X)) \mbox{ and }  G(X):=K( \ccD^b(\Coh(X))\text{,}
\]
which are denoted respectively $K^{\circ}(X)$ and $K_{\circ}(X)$ by Lee in~\cite{Lee-QK-2004}.
When $X$ is smooth, $\ccD^b(\Coh(X))$ and $\Perf(X)$ coincide.
\end{remark}

From the Lemma \ref{lemma: local V, W}, we want study the morphism $q:W\to \widehat{W}$. Notice that there are both smooth stacks.

 \begin{proposition}
 \label{prop:local,statement}
We have
\begin{align}
\rder^0 q_*\mathcal{O}_W&=\mathcal{O}_{\widehat{W}} \mbox{ in } \mathcal{D}^b(Coh(\widehat{W}) \label{eq: RO}\\
\rder^i q_*\mathcal{O}_W&=0 \mbox{ for } i>0\text{.} \label{eq:Ri}
\end{align}
\end{proposition}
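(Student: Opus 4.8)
The plan is to reduce the statement to the Zariski Main Theorem applied to the coarse moduli spaces and then lift back to the stacks. First I would invoke Proposition \ref{prop:properness,q,birational}: $q\colon W\to\wh W$ is proper and birational, with both $W$ and $\wh W$ smooth Artin stacks (indeed, by Remark \ref{affine w hat} and Construction \ref{construction W}, étale-locally they are quotients $[Y/G]$ of smooth affine schemes by finite groups). Since the question of computing $\rder q_*\ccO_W$ is étale-local on $\wh W$, I may assume $\wh W=[\wh Y/G]$ with $\wh Y=\Spec\wh B$ smooth affine, and $W=[Y/G]$ with $Y=\Spec B$ smooth affine (the group $G$ is the same because $q$ is representable and an isomorphism over a dense open). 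Pushing forward along $q$ then amounts to computing $\rder(q_Y)_*\ccO_Y$ for the induced $G$-equivariant proper birational morphism $q_Y\colon Y\to\wh Y$ of smooth affine schemes, and taking $G$-invariants (which is exact since $G$ is finite and we are in characteristic $0$).

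For the degree-zero statement \eqref{eq: RO}: since $q_Y$ is proper, birational, and $\wh Y$ is normal (being smooth), the Zariski Main Theorem in the form $q_{Y*}\ccO_Y=\ccO_{\wh Y}$ applies directly — a proper birational morphism to a normal variety has connected fibers and the structure sheaf pushes forward to the structure sheaf. Taking $G$-invariants gives $\rder^0 q_*\ccO_W=\ccO_{\wh W}$. For the higher vanishing \eqref{eq:Ri}: here I would use that $\wh W$ (hence $\wh Y$) has rational singularities — in fact is smooth — together with the fact, established in Lemma \ref{projective}, that the induced morphism on coarse spaces $|q|\colon|W|\to|\wh W|$ is projective. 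Concretely, $q$ factors (on coarse spaces) through a projective morphism between quasi-projective varieties, and one can apply the following: a proper birational morphism $f\colon X\to Y$ with $Y$ smooth (or having rational singularities) and $X$ with rational singularities satisfies $\rder^i f_*\ccO_X=0$ for $i>0$. Since $W$ is smooth it certainly has rational singularities, and $\wh W$ is smooth, so the hypotheses are met. The $G$-equivariant structure is preserved throughout, and taking invariants commutes with cohomology of $\rder q_{Y*}$, giving \eqref{eq:Ri}.

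The main obstacle is the passage between the stacks $W,\wh W$ and their coarse moduli spaces $|W|,|\wh W|$: the clean statements of Zariski's Main Theorem and the rational-singularities vanishing are most naturally phrased for schemes, while $q\colon W\to\wh W$ is a morphism of (possibly non-separated, certainly non-scheme) Deligne--Mumford stacks. I would handle this by working étale-locally where $\wh W\simeq[\wh Y/G]$, using that $q$ is representable (so $W\to\wh W$ pulls back to an honest morphism $Y\to\wh Y$ of schemes over each chart), and that $\wh Y\to|\wh W|=\wh Y/G$ is finite; cohomology and base change along the finite flat-in-char-0 map $\wh Y\to\wh Y/G$ then lets one transfer the vanishing back and forth. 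A secondary subtlety is checking that $Y$ is genuinely smooth (not merely with rational singularities) so that the vanishing input is unconditional — this is exactly the content of Remark \ref{affine w hat}, which I would cite. Once the local picture is set up, the remaining work is bookkeeping with $G$-invariants, which is routine in characteristic $0$.
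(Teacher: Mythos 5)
There is a genuine gap in your reduction. You assert, in parentheses, that "$q$ is representable and an isomorphism over a dense open," and use this to claim that on an étale chart $\wh W\simeq[\wh Y/G]$ the pullback of $W$ is an honest scheme $Y$ carrying an action of the \emph{same} finite group $G$. Neither point is established, and the second in particular is not expected to hold: $q$ contracts data attached to rational tails, and automorphisms of such configurations can collapse. The paper never claims $q$ is representable. Indeed, the entire shape of its argument is built around avoiding that assumption: it factors $q=q'\circ\alpha$, where $\alpha\colon W\to|W|\times_{|\wh W|}\wh W$ is the \emph{relative} coarse moduli map of $q$ (in the sense of Abramovich--Olsson--Vistoli), and $q'\colon|W|\times_{|\wh W|}\wh W\to\wh W$ is representable because it is pulled back from the map $|q|$ of coarse spaces. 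Lemma~\ref{lemma:vistoli} handles $\alpha$ (finite, $\rder^{0}\alpha_{*}\ccO=\ccO$, higher $\rder^{i}$ vanish), and Lemma~\ref{lemma:Zariski,main,thm} handles $q'$ via Zariski's Main Theorem and vanishing for projective birational morphisms, using Lemma~\ref{projective}. If $q$ were actually representable, $\alpha$ would be an isomorphism and this decomposition would be vacuous; the fact that it is invoked nontrivially is precisely the signal that the representability you want is not available.

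A second, smaller error: even granting your local setup, the pullback $Y=W\times_{\wh W}\wh Y$ cannot be affine. The map $Y\to\wh Y$ is proper and birational (a base change of $q$), and a proper morphism between affine schemes is finite; but $q$ has positive-dimensional fibers over points where rational tails are contracted, so $Y\to\wh Y$ is not finite and $Y$ is not affine. This particular slip is harmless for your Zariski Main Theorem and rational-singularities arguments (neither needs $Y$ affine), but the phrase "smooth affine" should be replaced by "smooth" there. The substantive repair you need is to insert the relative coarse moduli factorization before attempting to work in charts; once $q'$ is representable, your ZMT and higher-vanishing arguments can be run essentially verbatim on $q'$, while $\alpha$ is disposed of separately as in Lemma~\ref{lemma:vistoli}.
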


Before proving this proposition, we need some lemmas.

Denote by $|q|:|W|\to |\widehat{W}|$ the coarse moduli map of $q:W\to \widehat{W}$. We have the following commutative diagram

\[
\begin{tikzcd}
W \arrow[rrd, bend left,"q"] \arrow[rdd,bend right] \arrow[rd,"\alpha"] && \\
&\vert W \vert \times_{\vert \widehat{W} \vert}\widehat{W} \arrow[r,"q'"] \arrow[d] \arrow[rd,phantom,"\ulcorner", very near start] & \widehat{W} \arrow[d]\\
& \vert W \vert \arrow[r,"|q|"]& \vert \widehat{W}\vert\text{.}
\end{tikzcd}
\]

Notice that $|q|$ is representable so is $q'$.

\begin{proof}[Proof of Proposition \ref{prop:local,statement}]
As $q=q'\circ \alpha$, and from Lemmas \ref{lemma:vistoli} and \ref{lemma:local,statement}, we deduce the proposition.
\end{proof}

\begin{lemma}\label{lemma:vistoli}
 \begin{enumerate}
    \item The morphism $\alpha$ is finite.
    \item We have $\rder^0\alpha_*\mathcal{O}_W=\mathcal{O}_{|W|\times_{|\widehat{W}|}\widehat{W}}$.
    \item For $i>0$, we have $\rder^i\alpha_*\mathcal{O}_W=0$.
\end{enumerate}
\end{lemma}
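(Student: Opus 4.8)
\emph{Plan of proof.} The strategy is to reduce, via the local structure of tame Deligne--Mumford stacks, to a statement about algebraic spaces, and then to treat the three assertions in turn. Since we work over $\bbC$, both $W$ and $\wh W$ are tame, and by Remark~\ref{affine w hat} we may argue étale-locally on $|\wh W|$, where $\wh W\simeq[Y/G]$ with $Y=\Spec A$ smooth affine and $G$ a finite group, so that $|\wh W|=\Spec A^{G}$ and the coarse-space morphism $[Y/G]\to Y/G$, as well as its base changes, have structure-sheaf-preserving, exact pushforward.

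First I would check that $q\colon W\to\wh W$ is representable; in fact it induces an isomorphism on automorphism groups at every point. An automorphism of a point of $W$ which becomes trivial in $\wh W$ must act trivially away from the rational tails contracted by $q$, hence preserves each such tail and fixes its attaching node; but on each contracted tail the linear system $(L,w_{0},\dots,w_{r})$ restricts to a base-point-free system of positive degree (by the stability conditions of Construction~\ref{construction W} and because $A$ is disjoint from the tails), and an automorphism of $\bbP^{1}$ fixing the node and preserving such a system together with a chosen trivialisation of the line bundle is the identity; conversely an automorphism downstairs extends over the tails since the twist $\wh{L}=L|_{\wh{C}}(\sum_i\delta_{i}Q_{i})$ and the sections are reconstructed from the quasi-map. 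Granting this, $W_{Y}\coloneqq W\times_{\wh W}Y$ is a smooth separated algebraic space with a $G$-action such that $W=[W_{Y}/G]$ and $|W|=W_{Y}/G$, and $\alpha$ is the descent of the $G$-equivariant morphism $\beta\colon W_{Y}\to |W|\times_{|\wh W|}Y$, $w\mapsto(\overline{w},q_{Y}(w))$, whose composite with the projection $|W|\times_{|\wh W|}Y\to|W|=W_{Y}/G$ is the quotient morphism. As the quotient $W_{Y}\to W_{Y}/G$ is finite and $|W|\times_{|\wh W|}Y\to|W|$ (a base change of the finite $Y\to|\wh W|$) is separated, $\beta$ is finite by cancellation, hence so is $\alpha$; this is (1). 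Then (3) is immediate, a finite morphism being affine, so its higher direct images vanish on quasi-coherent sheaves.

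It remains to prove (2), the heart of the matter. By the above it suffices to show that $\beta_{*}\mathcal O_{W_{Y}}=\mathcal O_{|W|\times_{|\wh W|}Y}$ and to descend along $Y\to\wh W$. Over the locus where $G$ acts freely both $W_{Y}\to|W|$ and $|W|\times_{|\wh W|}Y\to|W|$ are étale $G$-torsors, and $\beta$, being $G$-equivariant over $|W|$, is an isomorphism there; so $\beta$ is finite and birational, and since $W_{Y}$ is smooth (hence normal) the claim is equivalent to the normality of the fibre product $|W|\times_{|\wh W|}Y$, i.e. to $\beta$ — equivalently $\alpha$ — being an isomorphism. Here the geometric input enters: because $W$ and $\wh W$ are smooth and $q$ is proper and birational (Proposition~\ref{prop:properness,q,birational}), the induced map $|q|\colon|W|\to|\wh W|$ is an isomorphism away from a closed subset of $|\wh W|$ of codimension $\geq 2$, whence $|W|\times_{|\wh W|}Y$ is regular in codimension one; combined with Serre's criterion and the fact that $|\wh W|$ (and $|W|$) has rational, in particular Cohen--Macaulay, singularities, this yields normality, and therefore $\alpha_{*}\mathcal O_{W}=\mathcal O_{|W|\times_{|\wh W|}\wh W}$.

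The main obstacle is exactly this normality of $|W|\times_{|\wh W|}Y$ in the last step. It is genuinely needed: for an arbitrary morphism of Deligne--Mumford stacks the analogue of (2) fails — a finite birational morphism onto a non-normal target produces a strictly larger $\alpha_{*}\mathcal O_{W}$ — so one must use the smoothness of both $W$ and $\wh W$ together with the properness and birationality of $q$ to control the exceptional locus. By contrast the representability of $q$, the finiteness of $\alpha$, and the vanishing of higher direct images are formal once the local model $\wh W=[Y/G]$ is available.
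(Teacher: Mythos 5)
The key claim in your proof — that $q\colon W\to\wh W$ is representable, indeed an isomorphism on automorphism groups — is false, and since the whole construction hinges on it, the argument does not go through. Consider a point of $W$ whose curve $C$ has a rational tail $T\cong\bbP^1$ on which $L$ has degree $\delta>0$, attached at a node fixed to $[1:0]$. With sections restricting on $T$ to (say) $w_0=x^\delta$, $w_1=y^\delta$ and all others vanishing, the automorphism $\phi\colon[x:y]\mapsto[\zeta x:y]$ with $\zeta^\delta=1$ fixes the node, satisfies $\phi^*w_i=w_i$ under the canonical isomorphism $\phi^*\mathcal O(\delta)\cong\mathcal O(\delta)$, and glues to the identity on $\overline{C\setminus T}$; it therefore gives a nontrivial automorphism of the point of $W$ lying in the kernel of $q$ on automorphism groups. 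So $q$ has positive-dimensional (well, finite but nontrivial) relative inertia, $W_Y=W\times_{\wh W}Y$ is a genuine Deligne--Mumford stack rather than an algebraic space, $\beta$ is not representable, and the reduction to normality of $|W|\times_{|\wh W|}Y$ collapses. Worse, your chain of equivalences would ultimately prove that $\alpha$ is an isomorphism, but $\alpha$ is precisely the relative coarse moduli space morphism and is \emph{not} an isomorphism wherever $q$ kills automorphisms, as the example shows.

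The paper's actual proof sidesteps these issues entirely. It identifies $q'\colon|W|\times_{|\wh W|}\wh W\to\wh W$ as the relative coarse moduli space of $q$ in the sense of Abramovich--Olsson--Vistoli \cite[Def.~3.2]{AOV-twisted-stable}, by taking a smooth presentation $[R\rightrightarrows U]$ of $\wh W$ and using that coarse spaces of tame stacks commute with flat base change, so that $\bigl[|W\times_{\wh W}R|\rightrightarrows|W\times_{\wh W}U|\bigr]=|W|\times_{|\wh W|}[R\rightrightarrows U]$. Then (1) and (2) are immediate from \cite[Thm.~3.1]{AOV-twisted-stable} (the relative coarse moduli morphism is finite and satisfies $\pi_*\mathcal O=\mathcal O$), and (3) follows from (1). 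This applies to non-representable $q$ — indeed, it only has content when $q$ is non-representable — which is exactly the case here. If you wish to retain a hands-on argument, you would need to work with the full inertia of $q$ and argue directly that the relative coarse moduli space has the desired form; but at that point you are essentially re-deriving the AOV result.
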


\begin{proof}[Proof of Lemme \ref{lemma:vistoli}]
The third statement follows from the first.

The first and second statement follows from Theorem 3.1 in \cite{AOV-twisted-stable} if we have that $q'$ is the relative coarse moduli space of $q:W\to \widehat{W}$ in the sense of \cite[Definition 3.2]{AOV-twisted-stable}. Let's prove that $q'$ is the relative coarse moduli space of $q$.

From the proof of \cite[Theorem 3.1]{AOV-twisted-stable}, we get that the relative coarse moduli space of $q:W\to \widehat{W}$ is given by the following construction.
Let $[R\rightrightarrows U]$ by a smooth presentation of $\widehat{W}$. Then the relative coarse moduli space of $q:W\to \widehat{W}$ has the following presentation (see proof of  Theorem 3.1 in \cite{AOV-twisted-stable})
\[
\Bigl[\left| W \times_{\widehat{W}}R\right| \rightrightarrows \left|W\times_{\widehat{W}}U\right|\Bigr]
=|W|\times_{|\widehat{W}|}[R\rightrightarrows U]
=|W|\times_{|\widehat{W}|}\widehat{W}\text{.}
\]
\end{proof}

\begin{lemma}\label{lemma:Zariski,main,thm}
 \begin{enumerate}
    \item The morphism $q'$ is representable, birational and projective.
    \item We have $\rder^0q'_*\mathcal{O}_{|W|\times_{|\widehat{W}|}\widehat{W}}=\mathcal{O}_{\widehat{W}}$.
    \item For $i>0$, we have $\rder^iq'_*\mathcal{O}_{|W|\times_{|\widehat{W}|}\widehat{W}} =0$.
\end{enumerate}
\end{lemma}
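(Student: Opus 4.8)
The plan is to establish the three assertions in order, the first being formal, and parts (2)--(3) reducing --- via the factorisation $q=q'\circ\alpha$ from the diagram preceding the lemma together with Lemma \ref{lemma:vistoli} --- to a Zariski-main-theorem computation for the morphism $q\colon W\to\widehat{W}$ between the \emph{smooth} stacks $W$ and $\widehat{W}$.

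For part (1): by construction $q'$ is the base change of the coarse-moduli morphism $|q|\colon|W|\to|\widehat{W}|$ along $\widehat{W}\to|\widehat{W}|$. Since $|W|$ and $|\widehat{W}|$ are (quasi-projective) schemes, $q'$ is representable; it is projective because $|q|$ is projective by Lemma \ref{projective} and projectivity of a representable morphism is stable under base change. For birationality I would argue: $q$ is birational by Proposition \ref{prop:properness,q,birational}, so it restricts to an isomorphism over a dense open substack of $\widehat{W}$; the coarse-moduli morphisms $W\to|W|$ and $\widehat{W}\to|\widehat{W}|$ are homeomorphisms on underlying topological spaces and carry open substacks to open subschemes, so $|q|$ is birational, and the dense open of $|\widehat{W}|$ over which $|q|$ is an isomorphism pulls back to a dense open of $\widehat{W}$ over which $q'$ is an isomorphism.

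For parts (2) and (3): Lemma \ref{lemma:vistoli}(2)--(3) says that $\rder\alpha_*\mathcal{O}_W\to\mathcal{O}_{|W|\times_{|\widehat{W}|}\widehat{W}}$ is an equivalence, so, using $q=q'\circ\alpha$,
\[
\rder q'_*\mathcal{O}_{|W|\times_{|\widehat{W}|}\widehat{W}}\simeq\rder q'_*\rder\alpha_*\mathcal{O}_W\simeq\rder q_*\mathcal{O}_W\text{,}
\]
and it remains to show $\rder^0q_*\mathcal{O}_W=\mathcal{O}_{\widehat{W}}$ and $\rder^iq_*\mathcal{O}_W=0$ for $i>0$. (This keeps the later deduction of Proposition \ref{prop:local,statement} non-circular, since that proposition invokes the present lemma only as a black box.) Here $W$ and $\widehat{W}$ are both smooth Deligne--Mumford stacks over $\mathbb{C}$ (Remark \ref{affine w hat} and Construction \ref{construction W}) and $q$ is proper and birational by Proposition \ref{prop:properness,q,birational}. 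For $\rder^0$: $\widehat{W}$ is normal and $W$ is reduced, so the Stein factorisation $\Spec_{\widehat{W}}(q_*\mathcal{O}_W)\to\widehat{W}$ is a finite birational morphism onto a normal stack, hence an isomorphism by Zariski's main theorem, giving $q_*\mathcal{O}_W=\mathcal{O}_{\widehat{W}}$. For $i>0$: $W$ being smooth, $q$ is a resolution of the already-smooth stack $\widehat{W}$, and smooth stacks have rational singularities --- a statement which, after passing to an étale atlas, is the classical vanishing for proper birational morphisms of smooth complex varieties (Hironaka together with Grauert--Riemenschneider) --- so $\rder^iq_*\mathcal{O}_W=0$.

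The genuine input is Proposition \ref{prop:properness,q,birational}, assumed here; granting it, the points requiring care are the transfer of Zariski's main theorem and of the vanishing of higher direct images to the Deligne--Mumford-stack setting, and the structural choice to run the argument through the smooth model $q\colon W\to\widehat{W}$ rather than directly for $q'$: the source $|W|\times_{|\widehat{W}|}\widehat{W}$ of $q'$ is only quotient-singular (hence rational-singular), not smooth, which is precisely why the finite morphism $\alpha$ of Lemma \ref{lemma:vistoli} must be interposed. I expect that last structural choice, together with verifying birationality in the strong (component-matching, isomorphism over a dense open) sense needed, to be the subtlest part.
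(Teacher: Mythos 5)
Your part (1) is correct and fills in the details the paper leaves implicit (representability and projectivity by base change from $|q|$ via Lemma~\ref{projective}, birationality by comparing dense opens through the coarse moduli maps).

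For parts (2)--(3), however, there is a genuine gap. You invert the paper's logic: you use Lemma~\ref{lemma:vistoli} to trade the statement about $q'$ for the statement $\rder q_*\ccO_W=\ccO_{\widehat W}$, and then propose to prove that statement directly by ``passing to an \'etale atlas'' and invoking the classical vanishing (Hironaka, Grauert--Riemenschneider) for proper birational morphisms of smooth varieties. But $q\colon W\to\widehat W$ is \emph{not} representable --- contracting rational tails genuinely changes the automorphism groups of the objects parametrised --- so pulling back along an \'etale atlas $U\to\widehat W$ produces a proper morphism $W\times_{\widehat W}U\to U$ whose source is still a Deligne--Mumford stack, not a scheme. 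At that point the classical scheme-level vanishing simply does not apply, and one is again forced to separate the finite ``orbifold'' direction from the ``geometric'' direction via a relative coarse moduli space --- i.e.\ one must re-introduce exactly the factorisation $q=q'\circ\alpha$ one started from. So your reduction is circular: the ``smooth model'' $q$ is not actually more tractable than $q'$, because the difficulty here is non-representability rather than the singularity of the source.

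This is precisely why the paper applies Zariski's main theorem and the higher-direct-image vanishing (Hironaka / R\"ulling) \emph{directly to} $q'$: since $q'$ is a base change of $|q|$ it is representable, so pulling back along an \'etale atlas $U\to\widehat W$ one obtains a genuine projective birational morphism of \emph{schemes} $|W|\times_{|\widehat W|}U\to U$, with target smooth and source having rational (quotient) singularities, and the cited references apply. In other words, the decomposition $q=q'\circ\alpha$ is not a convenience for avoiding quotient singularities, as you suggest; it is the mechanism that converts a non-representable proper morphism of stacks into a finite morphism (handled by Lemma~\ref{lemma:vistoli}) followed by a representable projective one (handled here). Your closing paragraph identifies $\alpha$ as the crux, which is right, but you then deploy it in the wrong direction. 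To repair the argument, keep part (1) as is and prove (2)--(3) directly for $q'$ as the paper does, leaving the composition with $\alpha$ for Proposition~\ref{prop:local,statement}.
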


\begin{proof}[Proof of Lemma \ref{lemma:Zariski,main,thm}]
The morphism $q'$ is representable  birational an proper by Lemma \ref{prop:properness,q,birational}.
Hence by Zariski Main theorem, we have that 
\[
\rder^0q'_* \ccO_{|W|\times_{|\widehat{W}|}\widehat{W}}=\ccO_{\widehat{W}}\text{.}
\]

The fact that $q'$ is moreover projective implies that (See  \cite[Theorem 1.1]{higher-pushforward-vanishing-2015-Rulling} or  Hironaka \cite[p.144]{Hironaka-64}).
For $i>0$, we have \[\rder^i q'_*\mathcal{O}_{|W|\times_{|\widehat{W}|}\widehat{W}} =0\text{.}\]
\end{proof}  

From the Lemma \ref{lemma: local V, W}, we have the local picture statement that is the diagram is homotopically cartesian:

\begin{equation}\label{local,diag,bis}
 \begin{tikzcd}
\bbR V\arrow[r,"i"]\arrow[d,"\oc"] \arrow[dr, phantom, "\ulcorner ", very near start] & W\arrow[d,"q"] \\ \bbR\widehat{V}\arrow[r,"\widehat{i}"]&\widehat{W}\text{.}
\end{tikzcd}
\end{equation}

\begin{lemma}\label{lemma:local,statement}
We have $\rder\oc_*\mathcal{O}_{\mathbb{R}V}=\mathcal{O}_{\mathbb{R}\widehat{V}}$.
\end{lemma}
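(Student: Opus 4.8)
The plan is to reduce the statement to the identity $\rder q_{*}\ccO_{W}=\ccO_{\wh W}$ already established in Proposition \ref{prop:local,statement}, by feeding it into the commutative square of \cref{lemma: local V, W} via the projection formula. First I would record the Koszul presentations: pushing the structure sheaves of the derived zero loci forward to the ambient \emph{smooth} stacks gives $\rder\wh i_{*}\ccO_{\bbR\wh V}=\Kos(F,\theta)$ on $\wh W$ and $\rder i_{*}\ccO_{\bbR V}=\Kos(q^{*}F,q^{*}\theta)$ on $W$ (this is the content of \eqref{V is kos} and \eqref{V hat is kos}, under the usual abuse identifying a sheaf on a closed derived subscheme with its pushforward). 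Since the Koszul complex is a bounded complex of vector bundles on which $q^{*}$ is exact, and contraction with $q^{*}\theta$ is the pullback of contraction with $\theta$, one has $\Kos(q^{*}F,q^{*}\theta)=\lder q^{*}\Kos(F,\theta)$; note also that $\oc$ is proper, being a base change of $q$, so that $\rder\oc_{*}$ does land in $\ccD^{b}(\Coh(\bbR\wh V))$.

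Next, using $q\circ i=\wh i\circ\oc$ and the projection formula applied to the perfect complex $\Kos(F,\theta)$, I would compute
\[
\rder\wh i_{*}\rder\oc_{*}\ccO_{\bbR V}=\rder q_{*}\lder q^{*}\Kos(F,\theta)\simeq\Kos(F,\theta)\otimes^{\lder}_{\ccO_{\wh W}}\rder q_{*}\ccO_{W}=\Kos(F,\theta),
\]
the middle equivalence being Proposition \ref{prop:local,statement}, and the last term being $\rder\wh i_{*}\ccO_{\bbR\wh V}$. As $\wh i$ is a closed immersion, $\rder\wh i_{*}$ is fully faithful, hence conservative, so this forces $\rder\oc_{*}\ccO_{\bbR V}\simeq\ccO_{\bbR\wh V}$; to see that the identification is the canonical one, I would check that $\rder\wh i_{*}$ carries the adjunction unit $\ccO_{\bbR\wh V}\to\rder\oc_{*}\oc^{*}\ccO_{\bbR\wh V}=\rder\oc_{*}\ccO_{\bbR V}$ to the map induced by the unit $\ccO_{\wh W}\to\rder q_{*}\ccO_{W}$, which is an equivalence.

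The step I expect to be the main obstacle is justifying the projection formula, since $q\colon W\to\wh W$ is a morphism of Artin stacks rather than of schemes. I would handle this exactly as in the proof of Proposition \ref{prop:local,statement}: factor $q=q'\circ\alpha$ through the relative coarse moduli space, where $\alpha$ is finite and cohomologically trivial ($\rder\alpha_{*}\ccO_{W}=\ccO_{|W|\times_{|\wh W|}\wh W}$, \cref{lemma:vistoli}) and $q'$ is representable and projective (\cref{lemma:Zariski,main,thm}), so that after disposing of $\alpha$ by finiteness the classical projection formula for $q'$ and the classical $\rder q'_{*}\ccO=\ccO$ suffice; alternatively one may simply invoke a projection formula for perfect complexes along a quasi-compact quasi-separated morphism of stacks, both $W$ and $\wh W$ being quotients of smooth affine schemes by finite groups (\cref{affine w hat}). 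Everything else — conservativity of $\rder\wh i_{*}$, the Koszul identification, and keeping all functors derived (harmless, as the complexes in play are perfect and $q^{*}$ is exact on the relevant bundles) — is formal.
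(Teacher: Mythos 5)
Your proof is correct and lands on the same two essential inputs as the paper's: the homotopy Cartesian square of \cref{lemma: local V, W} and the identity $\rder q_{*}\ccO_{W}=\ccO_{\wh W}$ from \cref{prop:local,statement}. The route is genuinely a bit different, though. The paper applies derived base change in one line: write $\ccO_{\bbR V}=\lder i^{*}\ccO_{W}$, commute $\rder\oc_{*}$ past $\lder\wh i^{*}$ across the square, and substitute $\rder q_{*}\ccO_{W}=\ccO_{\wh W}$. You instead push everything forward to the ambient smooth stack $\wh W$, replace the derived zero loci by their Koszul presentations, invoke the projection formula for the perfect complex $\Kos(F,\theta)$ along $q$, and then descend the resulting equivalence to $\bbR\wh V$ by conservativity of $\rder\wh i_{*}$ — taking care, correctly, to check that the adjunction unit is what realizes the isomorphism. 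The two arguments are close to interchangeable: for a homotopy Cartesian square with $\wh i$ a closed immersion, base change along $\wh i$ and the projection formula for $q$ are essentially the same statement unwound differently. Your version is longer but more self-contained, and has the pedagogical advantage of making visible exactly where the smoothness of $W,\wh W$ and the Koszul presentations are used; the paper's is terser but leans on base-change as a black box. The technical care you flag — justifying the projection formula for the Artin stack morphism $q$, either by factoring through the relative coarse moduli space as in \cref{prop:local,statement} or by reduction to quotients of affines — is the same care implicitly needed to justify the paper's appeal to base change, so neither approach escapes it; you have just made the dependence explicit.
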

\begin{proof}
 This follows from derived base change:
\begin{align*}
\rder\oc_*\mathcal{O}_{\mathbb{R}V}&=\rder\oc_* \lder i^*\mathcal{O}_{W}\\
&= \lder\widehat{i}^* \rder q_*\mathcal{O}_{W}\\
&= \lder\widehat{i}^* \mathcal{O}_{\widehat{W}} \mbox { by \eqref{eq: RO} and \eqref{eq:Ri} }\\
&=\mathcal{O}_{\mathbb{R}\widehat{V}}\text{.}
\end{align*}

\end{proof}

 \begin{proof}[Proof of Theorem \ref{thm:pushforward}]
The morphism $\oc:\RGw \to \RQm.$ Gives a morphism of structure sheaves 
\[
c: \ccO_{\RGw} \to  \oc_*\ccO_{\RQm}\text{.}
\]
To prove that it is an isomorphism, it is enough to prove it étale locally. That's exactly what we have done in \S \ref{subsec:local,embeddings}. Hence we are in the situation of diagram \eqref{local,diag,bis}, the Lemma \ref{lemma:local,statement} finishes the proof.
 \end{proof}

\bibliography{mybib}
\bibliographystyle{alpha}
\end{document}